\newtheorem{theorem}{Theorem}[section]
\newtheorem{corollary}[theorem]{Corollary}
\newtheorem{definition}[theorem]{Definition}
\newtheorem{lemma}[theorem]{Lemma}
\newtheorem{proposition}[theorem]{Proposition}
\newtheorem{remark}[theorem]{Remark}
\newtheorem{example}[theorem]{Example}
\newcommand{\M}{{\mathcal M}}
\newcommand{\N}{{\mathcal N}}
\numberwithin{equation}{section}
\begin{document}

\title[\emph{}]{Johnson-Schechtman inequalities for noncommutative martingales}

\author[]{Yong Jiao}
\address{School of Mathematics and Statistics, Central South University, Changsha 410075, China}
\email{jiaoyong@csu.edu.cn}
\curraddr{School of Mathematics and Statistics, University of NSW, Sydney,  2052, Australia}
\email{yong.jiao@unsw.edu.au}

\author[]{Fedor Sukochev}
\address{School of Mathematics and Statistics, University of NSW, Sydney,  2052, Australia}
\email{f.sukochev@unsw.edu.au}

\author[]{Dmitriy  Zanin}
\address{School of Mathematics and Statistics, University of NSW, Sydney,  2052, Australia}
\email{d.zanin@unsw.edu.au}

\author[]{Dejian Zhou}
\address{School of Mathematics and Statistics, Central South University, Changsha 410075, China}
\email{zhoudejian@csu.edu.cn}

\subjclass[2010]{Primary: 46L52, 46L53, 47A30; Secondary: 60G42}

\keywords{noncommutative martingales, Johnson-Schechtman inequalities, symmetric spaces, $\Phi$-moment inequalities}

\thanks{ Yong Jiao is supported by NSFC(11471337) and Hunan Province NSF(14JJ1004).}

\begin{abstract} In this paper we study Johnson-Schechtman inequalities for noncommutative martingales. More precisely, disjointification inequalities of noncommutative martingale difference sequences are proved in an arbitrary symmetric operator space $E(\mathcal{M})$ of a finite von Neumann algebra $\mathcal{M}$ without making any assumption on the Boyd indices of $E$. We show that we can obtain Johnson-Schechtman inequalities for arbitrary martingale difference sequences and that, in contrast with the classical case of independent random variables or the noncommutative case of freely independent random variables, the inequalities are one-sided except when $E=L_2(0,1)$. As an application, we partly resolve a problem stated by Randrianantoanina and Wu in \cite{NW}. We also show that we can obtain sharp $\Phi$-moment analogues for Orlicz functions satisfying $p$-convexity and $q$-concavity for $1 \leq p \leq 2$, $q=2$ and $p=2$, $2< q < \infty$. This is new even for the classical case. We also extend and strengthen the noncommutative Burkholder-Gundy inequalities in symmetric spaces and in the $\Phi$-moment case.
\end{abstract}

\maketitle

\section{Introduction}
Let $(f_k)_{k=0}^n\subset L_p(0,1), 2<p<\infty,$ be a sequence of independent mean zero random variables. In 1970, Rosenthal \cite{R} proved that
$$\Big\|\sum_{k=1}^nf_k\Big\|_p\approx_{p} \Big \|\bigoplus _{k=0}^nf_k\Big\|_{L_p\cap L_2},\quad 2<p<\infty,$$
where $\bigoplus _{k=0}^nf_k:=\sum_{k=1}^nf_k(\cdot-k+1)\chi_{[k-1,k)}$ is a disjoint sum of random variables $(f_k)_{k=0}^n$ which is a Lebesgue measurable function on $(0,\infty)$.
Carothers and Dilworth extended the results above to the case of Lorentz spaces in \cite{CD1}.
In the setting of symmetric function spaces, Johnson and Schechtman \cite{JS} established a far reaching generalisation of the Rosenthal inequality. If $E$ is a symmetric space on $[0,1]$ (see e.g. \cite{KPS,LT}) and if $1\leq p\leq \infty,$ then the set $Z_E^p$ consists of all measurable functions on $(0,\infty)$ for which
$$\|f\|_{Z_E^p}:=\|\mu(f)\chi_{[0,1]}\|_E+\|f\|_{L_1+L_p}<\infty.$$
Here, $\mu(f)$ denotes a decreasing rearrangement of the function $f$ (see \cite{KPS,LT} where the symbol $f^*$ is used and   Section 2 below).
Johnson and Schechtman \cite{JS} proved that
\begin{equation}\label{JS}
\Big\|\sum_{k=0}^nf_k\Big\|_E\approx_E \Big \|\bigoplus _{k=0}^nf_k\Big\|_{Z_E^2},
\end{equation}
for every sequence $(f_k)_{k=0}^n$ of independent mean zero random variables on $(0,1)$ whenever $L_p\subset E$ for some $p<\infty.$

Above, and in what follows, we write $A\lesssim_E B$ if there is a constant $C_E>0$ depending only on $E$ such that $A\leq C_E B$; and we write $A\approx_E B$ if both $A\leq C_E B$ and $B\leq C_E A$ hold, possibly with different constants; we write $A\approx B$ if the inequalities above hold for an absolute constant $C$ which is independent of $E.$

Astashkin and Sukochev introduced the concept of a Kruglov operator $K$ and extended Johnson-Schechtman inequalities in \cite{AS2005} (see also \cite{pacific}). Namely, it is proved that \eqref{JS} holds if and only if the Kruglov operator $K$ is bounded on a symmetric space $E$ (with Fatou norm). The latter condition is far less restrictive than the assumption that $L_p \subset E$ for some $p<\infty$ (see \cite[Section 7]{AS2005}). Recently, the operator approach of \cite{AS2005,pacific} was extended into the realm of (noncommutative) free probability theory in \cite{SZ}. By using a free Kruglov operator, a version of Johnson-Schechtman inequalities in the setting of free probability theory was obtained. We briefly recall their main results. Let $\M$ be a finite von Neumann algebra equipped with a faithful normal normalised trace. Let $E(\M)$ be a symmetric Banach operator space (\cite{KS}) equipped with a Fatou norm. Then\footnote{Here, $(e_k)$ denotes the standard basic sequence of $\ell_\infty$. In the case of $M=L_\infty(0,1)$, the distribution of $\sum_{k\geq0}f_k\otimes e_k$ and that of $\bigoplus_{k\geq0}f_k$ coincide.}
\begin{equation}\label{SZ}
\Big\|\sum_{k=0}^nx_k\Big\|_{E(\M)}\approx \Big \|\sum _{k=0}^nx_k\otimes e_k\Big\|_{Z_E^2(\M\bar{\otimes} \ell_\infty)}
\end{equation}
for every sequence $(x_k)$ of freely independent symmetrically distributed random variables from $E(\M).$ In the special case $E=L_p$ for $1\leq p\leq \infty$, this result was proved by Junge, Parcet and Xu \cite[Theorem A]{JPX}. For $p=\infty$, it is due to Voiculescu \cite{V1}.

In this paper, we replace the sequence of independent (or freely independent) random variables with a radically more general case: an arbitrary sequence of noncommutative martingale differences. The notation
$Int(L_p(0,1),L_q(0,1))$ stands for the set of all interpolation spaces for the couple $(L_p(0,1),L_q(0,1))$, $0<p<q\leq\infty.$ Our first main result can be stated as follows.

\begin{theorem}\label{first main} Let $E$ be a symmetric quasi-Banach space on $(0,1),$ let $\mathcal{M}$ be a finite von Neumann algebra, and let $(x_k)_{k\geq0}\subset E(\M)$ be a sequence of martingale differences.
\begin{enumerate}[{\rm (i)}]
\item\label{fma} If $E\in Int(L_1(0,1),L_2(0,1)),$ then
\begin{equation}\|\sum_{k\geq0}x_k\|_{E(\M)}\lesssim_E\|\sum_{k\geq0}x_k\otimes e_k\|_{Z_E^2(\M\bar{\otimes}\ell_\infty)}.\end{equation}
\item\label{fmb} If $E\in Int(L_2(0,1),L_{\infty}(0,1)),$ then
\begin{equation}\label{norm 2}\|\sum_{k\geq0}x_k\otimes e_k\|_{Z_E^2(\M\bar{\otimes} \ell_\infty)}\lesssim_E\|\sum_{k\geq0}x_k\|_{E(\M)}.\end{equation}
\end{enumerate}
\end{theorem}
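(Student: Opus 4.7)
The plan is to reduce both inequalities to endpoint cases via real interpolation, establishing (i) at $E=L_1$ and $E=L_2$ and (ii) at $E=L_2$ and $E=L_\infty$, and then passing to general symmetric quasi-Banach $E$. The $L_2$-endpoint is common to both parts and follows immediately from orthogonality of noncommutative martingale differences: $\|\sum_k x_k\|_2^2 = \sum_k \|x_k\|_2^2 = \|\sum_k x_k \otimes e_k\|_2^2$, while $\|\cdot\|_{Z_{L_2}^2}$ is equivalent to $\|\cdot\|_2$ on disjointly supported elements up to an absolute constant. Hence both inequalities reduce to equalities up to universal constants when $E = L_2$.

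For the $L_\infty$-endpoint of (ii), the target norm splits into two summands that I would treat separately. The piece $\|\mu(\sum_k x_k\otimes e_k)\chi_{[0,1]}\|_\infty = \sup_k \|x_k\|_\infty$ is dominated by $2\|\sum_k x_k\|_\infty$, using that each $x_k$ is the difference of two contractive conditional expectations of $\sum_j x_j$. The $L_1+L_2$ piece is at most the $L_2$-norm of the disjoint sum, which by orthogonality equals $\|\sum_k x_k\|_2 \leq \|\sum_k x_k\|_\infty$; here the normalisation of the trace on $\M$ is essential.

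The core difficulty is the $L_1$-endpoint of (i), namely $\|\sum_k x_k\|_1 \lesssim \|\mu(\sum_k x_k\otimes e_k)\chi_{[0,1]}\|_1 + \|\sum_k x_k\otimes e_k\|_{L_1+L_2}$. Without independence, the classical truncation argument is unavailable, and I would replace it with a noncommutative Gundy- or Davis-type decomposition of the martingale $(x_k)$ cut at the level $\lambda = \mu(1;\sum_k x_k\otimes e_k)$. This should produce a splitting $x_k = y_k + z_k$ into martingale differences with $\|y_k\|_\infty \lesssim \lambda$, so that orthogonality yields $\|\sum_k y_k\|_1 \leq \|\sum_k y_k\|_2 \lesssim \|\sum_k y_k\otimes e_k\|_{L_1+L_2}$, while the complementary part satisfies $\sum_k \|z_k\|_1 \lesssim \|\mu(\sum_k x_k\otimes e_k)\chi_{[0,1]}\|_1$. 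Carrying out such a splitting in the von Neumann setting so that both pieces remain adapted to the given filtration is the main obstacle, and will rely on Cuculescu-type projections together with the noncommutative Burkholder--Gundy machinery announced in the abstract.

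Finally, to propagate the endpoint estimates to arbitrary symmetric quasi-Banach $E$ in the stated interpolation classes, I rely on a functoriality property of the $Z$-construction: $E\in Int(L_p(0,1),L_q(0,1))$ should entail $Z_E^2(\M\bar{\otimes}\ell_\infty)\in Int(Z_{L_p}^2,Z_{L_q}^2)$. Granting this, the assignment sending an element of $E(\M)$ to the disjoint sum of its martingale differences (for a fixed filtration) and the corresponding summation map on its range are linear with bounded endpoint behaviour, and (i) and (ii) then follow by real interpolation. The quasi-Banach generality of $E$ forces a $K$-functional argument in the style of Brudnyi--Krugljak rather than complex interpolation for this last step.
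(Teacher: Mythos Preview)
Your outline for (ii) is close to the paper's: the map $x\mapsto\sum_k(\mathcal{E}_kx-\mathcal{E}_{k-1}x)\otimes e_k$ is an $L_2$-isometry by orthogonality and trivially bounded on $L_\infty$, and the paper then passes to $E(\M)\to Z_E^2(\M\bar\otimes\ell_\infty)$ by a direct spectral-cutting lemma rather than by asserting $Z_E^2\in Int(Z_{L_2}^2,Z_{L_\infty}^2)$.

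For (i), however, you misidentify the difficulty and create a genuine gap. The $L_1$ endpoint is \emph{not} hard once you stop restricting the summation operator to martingale difference sequences. The paper defines $T$ on \emph{all} sequences by
\[
T\Bigl(\sum_{k\geq0}x_k\otimes e_k\Bigr)=\sum_{k\geq0}\bigl(\mathcal{E}_k(x_k)-\mathcal{E}_{k-1}(x_k)\bigr),
\]
which coincides with plain summation when the $x_k$ are martingale differences. Now $T:L_1(\M\bar\otimes\ell_\infty)\to L_1(\M)$ is bounded by the triangle inequality and contractivity of each $\mathcal{E}_k$, and $T:L_2\to L_2$ by orthogonality of the outputs. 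No Davis--Gundy decomposition or Cuculescu projections are needed; your claimed bound $\|\sum_k y_k\|_2\lesssim\|\sum_k y_k\otimes e_k\|_{L_1+L_2}$ in that argument is in fact false in general (take a single $y_0$ of small support), so the route you sketch does not close as written.

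Your final step---interpolating via a conjectured functoriality $Z_E^2\in Int(Z_{L_p}^2,Z_{L_q}^2)$---is also left unproved, and the paper bypasses it entirely. Given $T$ bounded on $L_1$ and $L_2$, one fixes $x=x^*\in Z_E^2(\N)$, takes a projection $q\geq e_{(\mu(1,x),\infty)}(|x|)$ of trace one, and writes $x=qxq+qx(1-q)+(1-q)x$. On the finite corner $q\N q$ ordinary interpolation gives $T:E(q\N q)\to E(\M)$, which handles the $\|\mu(x)\chi_{(0,1)}\|_E$ term; the off-corner pieces have $L_2$-norm controlled by $\|x\|_{L_1+L_2}$ and then embed into $E(\M)$ via $L_2(0,1)\subset E$. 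This is the missing idea in your proposal: reduce to a finite corner where $Z_E^2$ collapses to $E$, rather than interpolate $Z$-spaces globally.
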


Even in a very special case, when $\M$ is a commutative von Neumann algebra $L_\infty(0,1)$ and $E$ is a symmetric Banach space, Theorem \ref{first main} strengthens Theorem 3.1 and Theorem 3.5 (or Corollary 3.2 and Corollary 3.6) from \cite{ASW}. The different method used in this present paper allows us to remove some extra assumptions (e.g., separability, order semi-continuity of the norm and the upper Boyd index $p_E<\infty$) imposed on the symmetric space $E$ in \cite{ASW}. Moreover, we consider the  harder case of quasi-Banach spaces in Theorem \ref{first main}, which renders the duality arguments used in \cite{ASW} inapplicable.

In sharp contrast with the setting \eqref{JS} (or \eqref{SZ}) of independent (or freely independent) random variables, only one-sided Johnson-Schechtman inequalities hold for martingale differences. More precisely, a two-sided disjointification inequality for noncommutative martingale differences holds only in $L_2.$

\begin{theorem}\label{js 2 sides} Let $E$ be a separable symmetric Banach space on $(0,1).$ If for every finite von Neumann algebra $\mathcal{M}$ and an arbitrary sequence $(x_k)_{k\geq0}\subset E(\M)$ of martingale differences we have
\begin{equation}\label{js equality}
\|\sum_{k\geq0}x_k\|_{E(\M)}\approx_{E}\|\sum_{k\geq0}x_k\otimes e_k\|_{Z_E^2(\M\bar{\otimes} \ell_\infty)},
\end{equation}
then $E=L_2(0,1).$
\end{theorem}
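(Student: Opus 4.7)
The plan is to test the hypothesised two-sided inequality \eqref{js equality} on a carefully chosen family of noncommutative martingale difference sequences in a matrix-valued setting, and then to use the resulting norm equivalence on a dense class of step functions to identify $E$ with $L_2(0,1)$. For each $k\ge 1$ I would work in $\M = L_\infty(0,1)\bar\otimes M_k(\mathbb{C})$ with the normalised trace, pick a Rademacher sequence $(r_n)_{n=1}^{k}$ on $(0,1)$, nonnegative scalars $(\lambda_n)_{n=1}^{k}$, matrix units $(e_{ij})$ in $M_k$, and set $x_n := r_n\otimes\lambda_n e_{1n}$ for $1\le n\le k$ (and $x_n := 0$ otherwise). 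This is a martingale difference sequence with respect to $\mathcal{F}_n = \sigma(r_1,\ldots,r_n)\bar\otimes M_k$ because $\mathbb{E}[r_n\,|\,r_1,\ldots,r_{n-1}]=0$, so \eqref{js equality} must hold for $(x_n)$.

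A short computation gives $Y := \sum_n x_n$ with $Y^{*}Y = \sum_{m,n}r_m r_n\lambda_m\lambda_n e_{mn}$, a rank-one matrix with unique non-zero eigenvalue $\sum_n\lambda_n^2$; hence $\mu(Y) = (\sum_n\lambda_n^2)^{1/2}\chi_{[0,1/k]}$ and $\|Y\|_{E(\M)} = (\sum_n\lambda_n^2)^{1/2}\phi_E(1/k)$, where $\phi_E$ is the fundamental function of $E$. On the other hand $|x_n| = |\lambda_n|e_{nn}$, so $|\bigoplus_n x_n|$ has singular values $\{|\lambda_n|\}_{n=1}^{k}$ each with mass $1/k$ in $\M\bar\otimes\ell_\infty$; consequently $\mu(\bigoplus_n x_n) = \sum_{n}|\lambda_{(n)}|\chi_{[(n-1)/k,n/k]}$ (decreasing rearrangement) is supported on $[0,1]$, so $\|\mu(\bigoplus_n x_n)\chi_{[0,1]}\|_E$ is the $E$-norm of this step function, while $\|\bigoplus_n x_n\|_{L_1+L_2(0,\infty)} = \|\mu(\bigoplus_n x_n)\|_{L_1(0,1)} = k^{-1}\sum_n\lambda_n$ (using $\|f\|_{L_1+L_2}=\|f\|_{L_1}$ for $f\ge0$ supported on $[0,1]$, which follows from Cauchy-Schwarz on a unit interval). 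Specialising \eqref{js equality} first to $\lambda_n\equiv 1$ reduces to $\sqrt{k}\,\phi_E(1/k)\approx_E 2$, forcing $\phi_E(1/k)\approx_E k^{-1/2}$: the fundamental function of $E$ agrees with that of $L_2$ up to equivalence.

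Feeding this estimate back into the general computation rewrites \eqref{js equality} as
\begin{equation*}
\|\operatorname{diag}(\lambda_n)\|_{L_2(0,1)}\;\approx_E\;\|\operatorname{diag}(\lambda_n)\|_{E}\;+\;\|\operatorname{diag}(\lambda_n)\|_{L_1(0,1)},
\end{equation*}
viewed as an equivalence of norms for the step function $\sum_n\lambda_n\chi_{[(n-1)/k,n/k]}$. The upper direction gives $\|\operatorname{diag}(\lambda_n)\|_{E}\lesssim_E\|\operatorname{diag}(\lambda_n)\|_{L_2}$ immediately. For the reverse direction I would split on the relative size of $\|\operatorname{diag}(\lambda_n)\|_{L_1}$: in the ``spread-out'' regime $\|\operatorname{diag}(\lambda_n)\|_{L_1}\le c\,\|\operatorname{diag}(\lambda_n)\|_{L_2}$ with $c$ sufficiently small the $L_1$-correction is absorbed and one reads off $\|\operatorname{diag}(\lambda_n)\|_E\gtrsim_E\|\operatorname{diag}(\lambda_n)\|_{L_2}$ directly; in the opposite ``concentrated'' regime the standard embedding $\|\cdot\|_E\ge\|\cdot\|_{L_1}/\phi_{E'}(1)$ coming from K\"{o}the duality, combined with $\|\operatorname{diag}\|_{L_1}\gtrsim\|\operatorname{diag}\|_{L_2}$, again produces the lower bound. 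Since $E$ is separable, the resulting equivalence $\|\cdot\|_E\approx_E\|\cdot\|_{L_2}$ on step functions of the form $\sum_n\lambda_n\chi_{[(n-1)/k,n/k]}$ ($k\in\mathbb{N}$) holds on a dense set and extends by continuity to all of $E$, giving $E = L_2(0,1)$ with equivalent norms. The hard part will be organising the two regimes into a single uniform equivalence, since the crossover between them depends on both $k$ and $(\lambda_n)$; a further delicacy is ensuring that the duality embedding has the correct constant under the normalisation chosen for $\phi_E$, so that no hidden factor of $k$ leaks into the final equivalence.
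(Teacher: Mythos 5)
Your construction is correct, and it takes a genuinely different route from the paper. You test \eqref{js equality} on the rank-one matrix martingales $x_n=r_n\otimes\lambda_ne_{1n}$ in $L_\infty(0,1)\bar{\otimes}M_k(\mathbb{C})$: the computations $\mu(\sum_nx_n)=(\sum_n\lambda_n^2)^{1/2}\chi_{(0,1/k)}$ and $\mu(\sum_nx_n\otimes e_n)=\sum_n\lambda_{(n)}\chi_{[(n-1)/k,n/k)}$ are right, the filtration $\sigma(r_1,\dots,r_n)\bar{\otimes}M_k$ (augmented by $\M$ itself at the top so that the union is weak$^*$-dense) does make $(x_n)$ a martingale difference sequence, and the specialisation $\lambda_n\equiv1$ indeed forces $\phi_E(1/k)\approx_Ek^{-1/2}$, after which \eqref{js equality} becomes $\|f\|_{L_2}\approx_E\|f\|_E+\|f\|_{L_1}$ for step functions $f$ on uniform partitions. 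The paper proceeds quite differently and stays entirely commutative: it first feeds the Haar martingale into \eqref{js equality} to get unconditionality of the Haar system, hence $E\in Int(L_p,L_q)$ with $1<p<q<\infty$, and then applies \eqref{js equality} to the sequence $(x\otimes r_k)$, using dilations $\sigma_n\mu(x)$, the Khinchine inequality in $L_1$ and $L_q$, Hardy--Littlewood submajorization and interpolation to reach $\|x\|_E\approx_E\|x\|_2$ for $x\in L_\infty(0,1)$, before a limiting argument. Your route is more elementary (no Haar unconditionality, no Khinchine, no interpolation machinery), at the price of invoking genuinely noncommutative test algebras, which the hypothesis permits since the constant in \eqref{js equality} depends only on $E$.

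Two spots should be tightened. First, your ``two regimes'' at the end are unnecessary, and they are the only place where your worry about uniformity in $k$ arises: every symmetric Banach space on $(0,1)$ embeds continuously into $L_1(0,1)$ (see \cite[Theorem II.4.1]{KPS}), so $\|f\|_{L_1}\lesssim_E\|f\|_E$ outright and hence $\|f\|_{L_2}\lesssim_E\|f\|_E+\|f\|_{L_1}\lesssim_E\|f\|_E$, with constants independent of $k$ and $(\lambda_n)$; combined with the trivial direction this gives $\|f\|_E\approx_E\|f\|_{L_2}$ on all such step functions simultaneously. Second, the extension from step functions to all of $E$ and $L_2$ should not be phrased as bare ``continuity'', since $E$ need not have the Fatou property: restrict to dyadic partitions $k=2^m$ so that differences of step functions are again step functions (and note the equivalence passes from nonnegative to arbitrary step functions because all three norms depend only on $|f|$); for $E\subset L_2$ approximate $\mu(f)$ from below by such step functions and use monotone convergence in $L_2$; for $L_2\subset E$ take step functions $g_m\to f$ in $L_2$, observe $\|g_m-g_{m'}\|_E\lesssim_E\|g_m-g_{m'}\|_{L_2}\to0$, and identify the $E$-limit with $f$ via convergence in measure --- exactly the limiting device the paper uses at the end of its own proof. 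Separability of $E$ is what guarantees density of dyadic step functions in $E$ for this last step. With these adjustments your argument is complete.
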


{\color{red}Again}, in the special case of $\M=L_\infty(0,1)$, Theorem \ref{js 2 sides} strengthens the corresponding result of \cite[Corollary 3.8]{ASW}. The assumptions of Theorem \ref{js 2 sides} is much weaker than those in \cite{ASW}.

The following theorem extends \cite[Theorem 2.1]{PX}, and strengthens \cite[Proposition 4.18]{DPPS} and \cite[Theorem 3.1]{J-BG}. Again, using our methods, extra assumptions on the space $E$ imposed in \cite{DPPS} (e.g. $2$-convexity) are eliminated.

\begin{theorem}\label{second main} Let $E$ be a symmetric Banach space on $(0,1)$, let $\mathcal{M}$ be a finite von Neumann algebra, and let $(x_k)_{k\geq0}\subset E(\M)$ be a sequence of martingale differences.
\begin{enumerate}[{\rm (i)}]
\item\label{sma} If $E\in Int(L_p(0,1),L_2(0,1))$ for some $1<p<2,$ then
\begin{equation}\label{bgundy leq2}
\|\sum_{k\geq0}x_k\|_{E(\M)}\approx_{E}\inf_{x_k=y_k+z_k}\Big(\Big\|\Big(\sum_{k\geq0}|y_k|^2\Big)^{1/2}\Big\|_{E(\M)}+\Big\|\Big(\sum_{k\geq0}|z_k^*|^2\Big)^{1/2}\Big\|_{E(\M)}\Big),
\end{equation}
where the infimum is taken over the sequences $(y_k) $ and $(z_k)$ of martingale differences.
\item\label{smb} If $E\in Int(L_2(0,1),L_q(0,1))$ for some $2<q<\infty,$ then
\begin{equation}\label{bgundy geq2}
\|\sum_{k\geq0}x_k\|_{E(\M)}\approx_{E}\max\Big\{\Big\|\Big(\sum_{k\geq0}|x_k|^2\Big)^{1/2}\Big\|_{E(\M)},\Big\|\Big(\sum_{k\geq0}|x_k^*|^2\Big)^{1/2}\Big\|_{E(\M)}\Big\}.
\end{equation}
\end{enumerate}
\end{theorem}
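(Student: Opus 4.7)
My plan is to derive Theorem \ref{second main} by combining the Pisier--Xu noncommutative Burkholder--Gundy inequality at the endpoints $L_p$ with a real-interpolation argument using the hypothesis $E\in\mathrm{Int}(L_p,L_2)$ or $E\in\mathrm{Int}(L_2,L_q)$; the interpolation machinery mirrors what is already deployed in the proof of Theorem \ref{first main}. Consider the three subadditive maps on sequences of noncommutative martingale differences:
\begin{equation*}
T_s(x)=\sum_k x_k,\qquad T_c(x)=\Big(\sum_k |x_k|^2\Big)^{1/2},\qquad T_r(x)=\Big(\sum_k |x_k^*|^2\Big)^{1/2}.
\end{equation*}
Pisier--Xu delivers $\|T_sx\|_p\approx_p\max(\|T_cx\|_p,\|T_rx\|_p)$ for $p\in[2,\infty)$, and the decomposition equivalence $\|T_sx\|_p\approx_p\inf\{\|T_cy\|_p+\|T_rz\|_p:x_k=y_k+z_k\}$ for $p\in(1,2]$.

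For part (ii), the endpoint estimates at $p=2$ (trivial) and $p=q$ transfer to every $E\in\mathrm{Int}(L_2,L_q)$ by the Calder\'on--Mitjagin theorem for subadditive operators, delivering \eqref{bgundy geq2}. The upper bound of part (i) is obtained analogously from the Pisier--Xu decomposition estimate in $L_p$ interpolated against the trivial $L_2$ bound, since the decomposition functional $(x_k)\mapsto\inf\{\|T_cy\|+\|T_rz\|\}$ is again subadditive.

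For the lower bound in part (i) one must produce an explicit decomposition $x_k=y_k+z_k$ with $\|T_cy\|_E+\|T_rz\|_E\lesssim\|T_sx\|_{E(\M)}$. The natural route is K\"othe duality: the dual of $E\in\mathrm{Int}(L_p,L_2)$ satisfies $E^{\times}\in\mathrm{Int}(L_2,L_{p'})$ with $p'>2$, so part (ii) applies on $E^{\times}$, and trace duality between $E(\M)$ and $E^{\times}(\M)$ converts the resulting max-characterisation into the desired $\inf$-characterisation on $E$, with the decomposition supplied by an optimisation/Hahn--Banach argument.

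\textbf{Main obstacle.} The decisive step is the lower bound of part (i). A direct construction from Pisier--Xu $L_p$-theory needs a convexity hypothesis---this is precisely the 2-convexity assumption of \cite{DPPS}. Our K\"othe-duality route avoids 2-convexity by transferring the hard work to part (ii) on $E^{\times}$, which is unencumbered. The remaining technical burden is to verify that trace duality identifies the K\"othe dual of the Hardy-type sum space $h_E^c+h_E^r$ with the intersection space $h_{E^{\times}}^c\cap h_{E^{\times}}^r$ with matching constants, which is not automatic when $E$ lacks separability or order-continuity.
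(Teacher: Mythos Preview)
Your outline diverges from the paper's proof in two substantive ways, and one of them hides a real difficulty.

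The paper does not interpolate the Pisier--Xu estimates directly. Instead it first invokes unconditionality of martingale differences (\cite[Lemma~4.17]{DPPS}) to replace $\sum_k x_k$ by the Rademacher sum $\sum_k x_k\otimes r_k$, which converts the problem into a Khinchine-type question for \emph{arbitrary} sequences $(x_k)$. The upper bound in (ii) then comes from the noncommutative Khinchine inequality in symmetric spaces \cite[Theorem~4.1]{DPPS}, and the lower bound in (i) from \cite[Theorem~1.1]{lM-Suk}. The remaining two directions are obtained by applying the interpolation Lemmas~\ref{sum interpolation} and~\ref{intersection interpolation} to explicit \emph{linear} operators into or out of the column space $\M\bar\otimes B(\ell_2)$ (see Lemmas~\ref{left hinchine estimate} and~\ref{right hinchine estimate}), together with Lemma~\ref{bekjan} (Stein inequality) to pass from arbitrary decompositions to martingale-difference decompositions.

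Your scheme is vulnerable at the upper bound in (ii). Saying ``the endpoint estimates transfer by Calder\'on--Mitjagin for subadditive operators'' presupposes you have an operator between spaces that form a compatible interpolation couple. But the map $(x_k)\mapsto\sum_k x_k$, viewed as acting on martingale-difference sequences normed by $\max(\|T_c x\|_r,\|T_r x\|_r)$, lives on the intersection space $L_r(\M;\ell_2^c)\cap L_r(\M;\ell_2^r)$, and interpolating such intersections is exactly the nontrivial content of the Khinchine-type results the paper imports. Nor can you interpolate column and row separately: for $q>2$ the one-sided bound $\|\sum_k y_k\|_q\lesssim\|T_c y\|_q$ is false in general for noncommutative martingale differences. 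The paper sidesteps all of this by passing to Rademachers first. Your route can be made to work for the lower bound in (ii) and the upper bound in (i) (these are the directions handled by Lemmas~\ref{right hinchine estimate} and~\ref{left hinchine estimate}, essentially your linear-operator interpolation done carefully through the $Z_E^2$ machinery to manage the finite/semifinite mismatch), but not for the other two without further input.

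Your K\"othe-duality plan for the lower bound in (i) is a genuinely different route from the paper's direct citation of \cite{lM-Suk}. It is viable in principle, but the obstacle you flag---identifying $(h_E^c+h_E^r)^*$ with $h_{E^\times}^c\cap h_{E^\times}^r$ without separability or order-continuity on $E$---is real and would have to be confronted, whereas the paper's approach avoids it entirely.
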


Theorem \ref{second main} is sharp in the following sense.

\begin{theorem}\label{second l2} Let $E$ be a symmetric Banach space on $(0,1).$ Suppose that for every sequence $(x_k)_{k\geq0}\subset E(\M)$ of martingale differences the equality \eqref{bgundy leq2} (or \eqref{bgundy geq2}) holds. It follows that $E\in Int(L_p,L_q)$ for $1<p<q<\infty.$
\end{theorem}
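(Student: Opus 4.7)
The plan is to reduce the hypothesis to the commutative setting, observe that both \eqref{bgundy leq2} and \eqref{bgundy geq2} collapse there to the scalar square-function inequality, apply this inequality to the Haar basis to derive its unconditionality in $E$, and conclude via the classical characterization of nontriviality of the Boyd indices in terms of unconditionality of the Haar system.

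First I would specialise the hypothesis to $\M=L_\infty(0,1)$ with its standard trace, so that $E(\M)$ reduces to the function space $E$ and $|y_k^*|=|y_k|$ for every element. For \eqref{bgundy geq2}, the two entries of the maximum coincide and one obtains directly
\[
\Big\|\sum_k x_k\Big\|_E\approx_E\Big\|\Big(\sum_k|x_k|^2\Big)^{1/2}\Big\|_E.
\]
For \eqref{bgundy leq2}, the trivial decomposition $y_k=x_k$, $z_k=0$ yields $\inf\le\|(\sum_k|x_k|^2)^{1/2}\|_E$; conversely the pointwise bound $|x_k|^2\le 2(|y_k|^2+|z_k|^2)$ together with $\sqrt{a+b}\le\sqrt{a}+\sqrt{b}$ gives the reverse estimate up to a factor $\sqrt{2}$. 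Hence under either hypothesis the same scalar square-function equivalence holds for every commutative martingale difference sequence $(x_k)\subset E$.

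Next I would apply the equivalence to the Haar system $(h_k)_{k\ge0}$ on $[0,1]$, which is a martingale difference sequence for the dyadic filtration. For arbitrary scalars $(a_k)$ and signs $(\varepsilon_k)$, both $(a_k h_k)$ and $(\varepsilon_k a_k h_k)$ are martingale differences, so the commutative square-function equivalence yields
\[
\Big\|\sum_k\varepsilon_k a_k h_k\Big\|_E\approx_E\Big\|\Big(\sum_k a_k^2 h_k^2\Big)^{1/2}\Big\|_E\approx_E\Big\|\sum_k a_k h_k\Big\|_E,
\]
where the middle quantity is independent of the signs. Thus $(h_k)$ is an unconditional basic sequence in $E$ with a uniform constant.

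The conclusion then follows from the classical equivalence, for symmetric Banach spaces on $[0,1]$, between unconditionality of the Haar system as a basic sequence and the condition $1<p_E\le q_E<\infty$ on the Boyd indices (Lindenstrauss--Tzafriri, \emph{Classical Banach Spaces II}, Theorem~2.c.6; the implication we need does not rely on separability, since it may be established directly via the dilation/martingale-transform argument). Boyd's interpolation theorem then gives $E\in Int(L_p(0,1),L_q(0,1))$ for any $1<p<p_E$ and $q_E<q<\infty$. The principal technical point is verifying the reduction in the first step for \eqref{bgundy leq2}, where one must confirm that no decomposition of $x_k$ into a sum of two martingale differences can drop the infimum below a universal multiple of $\|(\sum_k|x_k|^2)^{1/2}\|_E$; beyond this, the Haar-basis characterization in the final step, though standard, should be invoked in a form valid without separability or order-continuity hypotheses on $E$.
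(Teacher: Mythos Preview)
Your proposal is correct and follows essentially the same approach as the paper's proof: specialise to $\M=L_\infty(0,1)$, apply the hypothesis to the Haar system to deduce its unconditionality in $E$, and invoke \cite[Theorem 2.c.6]{LT}. The only difference is cosmetic: for \eqref{bgundy leq2} you explicitly collapse the infimum to the single square-function norm via the pointwise bound $|x_k|^2\le 2(|y_k|^2+|z_k|^2)$, whereas the paper simply applies the hypothesis to both $(\alpha_k h_k)$ and $(|\alpha_k| h_k)$ and uses that the right-hand side of \eqref{bgundy leq2} is invariant under the sign change $\alpha_k\mapsto|\alpha_k|$ (decompositions correspond bijectively via multiplication by $\mathrm{sign}(\alpha_k)$).
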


Theorem \ref{first main} and Theorem \ref{js 2 sides} (respectively, Theorem \ref{second main} and Theorem \ref{second l2}) are proved in Section 3 (respectively, Section 4). Section \ref{burk section} is devoted to the noncommutative Burkholder inequality. In the setting of $L_p$-spaces, it  was proved by Junge and Xu \cite{JX1} and was later extended to Lorentz spaces in \cite{J-B} by using the weak type $(1,1)$ decomposition of Randrianantoanina \cite{N2}. Recently, it was established in \cite{dirksen2015} (respectively, in \cite{NW}) for a symmetric operator space $E(\mathcal{M})$ such that $E\in Int(L_p,L_q)$ for $2<p<q<\infty$ (respectively,  $E\in Int(L_p,L_q)$ for $1<p<q<2$). However, the Burkholder inequality for the case when $E\in Int(L_2,L_q)$ for some $2<q<\infty$ (or $E\in Int(L_p,L_2)$ for some $1<p<2$ ) was stated in \cite{NW} as an open question. Applying Theorem \ref{first main} and Theorem \ref{second main}, we partly resolve this problem. Our second main result is stated as follows; see Subsection 2.6 for the unexplained notation.

\begin{theorem}\label{the 2-4} Let $E$ be a symmetric Banach space on $(0,1)$, let $\mathcal{M}$ be a finite von Neumann algebra, and let $(x_k)_{k\geq0}\subset E(\M)$ be a sequence of martingale differences. If $E\in Int(L_2(0,1),L_4(0,1)),$ then
\begin{eqnarray*}\|\sum_{k\geq0}x_k\|_{E(\M)}&\approx_{E}&\|\sum_{k\geq0}x_k\otimes e_k\|_{Z^2_E(\M\bar{\otimes}\ell_\infty)}
\\&+&\Big\|\Big(\sum_{k\geq0}\mathcal{E}_{k-1}(|x_k|^2)\Big)^{1/2}\Big\|_{E(\M)}+\Big\|\Big(\sum_{k\geq0}\mathcal{E}_{k-1}(|x_k^*|^2)\Big)^{1/2}\Big\|_{E(\M)}.
\end{eqnarray*}
\end{theorem}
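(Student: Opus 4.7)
Combine Theorem \ref{first main} and Theorem \ref{second main} via a $2$-concavification of the martingale difference sequence.

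For the lower bound, Theorem \ref{first main}(ii) gives the disjointification estimate
\[
\|\sum_{k\ge 0} x_k\otimes e_k\|_{Z_E^2(\M\bar\otimes\ell_\infty)} \lesssim_E \|\sum_{k\ge 0} x_k\|_{E(\M)},
\]
since $E\in\mathrm{Int}(L_2,L_4)\subseteq\mathrm{Int}(L_2,L_\infty)$. The bounds on the conditioned column and row square functions
\[
\|(\sum_k \mathcal{E}_{k-1}|x_k|^2)^{1/2}\|_{E(\M)} + \|(\sum_k \mathcal{E}_{k-1}|x_k^*|^2)^{1/2}\|_{E(\M)} \lesssim_E \|\sum_k x_k\|_{E(\M)}
\]
follow from the noncommutative Burkholder inequality of Junge--Xu at the endpoints $p\in\{2,4\}$ transported to $E$ by interpolation on the column and row conditioned Hardy spaces $h^c_E(\M),\ h^r_E(\M)$.

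For the upper bound, Theorem \ref{second main}(ii) (applicable with $q=4$) yields
\[
\|\sum_k x_k\|_{E(\M)} \approx_E \|(\sum_k|x_k|^2)^{1/2}\|_{E(\M)} + \|(\sum_k|x_k^*|^2)^{1/2}\|_{E(\M)},
\]
so by adjoint symmetry it suffices to bound the first summand. Let $F$ denote the $2$-concavification of $E$, i.e.\ the symmetric quasi-Banach space with $\|a\|_F := \|a^{1/2}\|_E^2$ for $a\ge 0$; then $E\in\mathrm{Int}(L_2,L_4)$ translates into $F\in\mathrm{Int}(L_1,L_2)$ and $\|(\sum_k|x_k|^2)^{1/2}\|_{E(\M)}^2 = \|\sum_k|x_k|^2\|_{F(\M)}$. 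Decompose $|x_k|^2 = \mathcal{E}_{k-1}|x_k|^2 + d_k$ with $d_k := |x_k|^2 - \mathcal{E}_{k-1}|x_k|^2$, so $(d_k)$ is an $(\M_k)$-martingale difference in $F(\M)$. The triangle inequality in $F$ combined with Theorem \ref{first main}(i) applied to $(d_k)$ gives
\[
\|\sum_k|x_k|^2\|_{F(\M)} \le \|\sum_k\mathcal{E}_{k-1}|x_k|^2\|_{F(\M)} + C_F\|\sum_k d_k\otimes e_k\|_{Z_F^2(\M\bar\otimes\ell_\infty)}.
\]
The first term on the right equals $\|(\sum_k\mathcal{E}_{k-1}|x_k|^2)^{1/2}\|_{E(\M)}^2$; the second, via $|d_k|\le |x_k|^2 + \mathcal{E}_{k-1}|x_k|^2$ and monotonicity of $Z_F^2$ on the positive cone, reduces to controlling $\|\sum_k|x_k|^2\otimes e_k\|_{Z_F^2}$ and $\|\sum_k\mathcal{E}_{k-1}|x_k|^2\otimes e_k\|_{Z_F^2}$. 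The identification $\|\sum_k a_k\otimes e_k\|_{Z_F^2} \approx_E \|\sum_k a_k^{1/2}\otimes e_k\|_{Z_E^2}^2$ (valid on positive disjoint sums) converts the first into $\|\sum_k x_k\otimes e_k\|_{Z_E^2}^2$, while a Doob-type bound for the predictable sequence $(\mathcal{E}_{k-1}|x_k|^2)$ converts the second into $\|(\sum_k\mathcal{E}_{k-1}|x_k|^2)^{1/2}\|_{E(\M)}^2$. Taking square roots recovers the claimed upper bound.

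The principal obstacle is the last compatibility step. It requires matching the symmetric component $\|\mu(\cdot)\chi_{[0,1]}\|$ under $2$-concavification (straightforward from $F=E_{1/2}$) with the tail component in the definition of $Z_E^2$; here one exploits that $L_1+L_2$ is the $2$-concavification of $L_2+L_4$, precisely the interpolation window given by our hypothesis $E\in\mathrm{Int}(L_2,L_4)$. Controlling the predictable disjoint sum additionally needs a noncommutative Doob-type maximal inequality valid in $F\in\mathrm{Int}(L_1,L_2)$, which is available via interpolation of Junge's noncommutative Doob inequality at $L_p$ for $1<p\le 2$.
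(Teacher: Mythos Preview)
Your strategy is exactly the paper's: for the upper bound, apply Theorem~\ref{second main}(ii), $2$-concavify to $F=E_{(2)}\in\mathrm{Int}(L_1,L_2)$, write $|x_k|^2=\mathcal{E}_{k-1}|x_k|^2+d_k$ with $(d_k)$ a martingale difference sequence, and invoke Theorem~\ref{first main}(i); for the lower bound, combine Theorem~\ref{first main}(ii) with a bound on the conditioned square function. Two of your steps need correction, however.

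The operator inequality $|d_k|\le |x_k|^2+\mathcal{E}_{k-1}|x_k|^2$ is in general false for noncommuting positive operators, so you cannot use monotonicity of $Z_F^2$ on the positive cone here. The paper instead applies the quasi-triangle inequality in $Z_{E_{(2)}}^2$ directly to the decomposition $\sum_k d_k\otimes e_k=\sum_k|x_k|^2\otimes e_k-\sum_k\mathcal{E}_{k-1}|x_k|^2\otimes e_k$, and then controls the predictable disjoint sum via the termwise submajorization $\mathcal{E}_{k-1}|x_k|^2\prec\prec|x_k|^2$ (Corollary~\ref{disjoint submajorization}); this reduces everything to $\|\sum_k|x_k|^2\otimes e_k\|_{Z_{E_{(2)}}^2}$, and no Doob-type inequality is needed. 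Note also that your ``identification'' $\|\sum_k a_k\otimes e_k\|_{Z_F^2}\approx\|\sum_k a_k^{1/2}\otimes e_k\|_{Z_E^2}^2$ is only one-sided: the tail component of $Z_{E_{(2)}}^2$ is $L_1+L_2$, whose $2$-convexification is $L_2+L_4$, not the $L_1+L_2$ appearing in $Z_E^2$; the gap is absorbed via $\|X\|_{L_2+L_4}\le\|X\|_2$ on a probability space (see the argument for \eqref{4d}). For the lower bound, ``interpolation on $h_E^c$'' glosses over the main obstacle: the map $x\mapsto(\sum_k\mathcal{E}_{k-1}|dx_k|^2)^{1/2}$ is nonlinear, so endpoint bounds from Junge--Xu cannot be interpolated directly. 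The paper linearizes via Junge's Hilbert-module embedding (Theorem~\ref{junge decomposition}), producing isometries $u_{k-1}$ with $u_{k-1}(x)^*u_{k-1}(y)=\mathcal{E}_{k-1}(x^*y)\otimes e_{11}$; the conditioned square function then equals the column norm of a genuinely linear operator, which is interpolated using the dual Doob inequality at the $L_p$ endpoints (Proposition~\ref{pro 2-q}).
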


Unfortunately, at the time of writing this paper, we still do not know how to extend this result to the case $E\in Int(L_2,L_q)$ for some $4<q<\infty.$ The partial achievement in this direction is Theorem \ref{burk conditional}.

Our second aim in this paper is to prove some $\Phi$-modular analogues of Johnson-Schechtman inequalities for noncommutative martingales. Let $\Phi$ be an Orlicz function on $(0,\infty)$. The study of $\Phi$-moment inequalities for martingales was initiated by Burkholder and Gundy in their
remarkable paper \cite{BG}. Since then, most of the classical $p$-th moment inequalities for martingales were transferred to $\Phi$-moment inequalities; see \cite{BDG,G}. In recent years, $\Phi$-moment inequalities have been extended to noncommutative martingales. We refer to \cite{BC,DR} for $\Phi$-moment versions of noncommutative Burkholder-Gundy inequalities and also refer to \cite{BCO} for noncommutative maximal inequalities for convex functions.
In very recent papers \cite{JSXZ,JSZ-LMS}, the substantial difference between \eqref{JS} as well as \eqref{SZ} and their $\Phi$-moment analogues has been demonstrated in the case of independent and noncommutative independent random variables. Motivated by the results above, it is natural to consider $\Phi$-moment analogue of Theorem \ref{first main}. The following is our third main result.

\begin{theorem}\label{third main} Let $\M$ be a finite von Neumann algebra and $\Phi$ be an Orlicz function, and let $\{x_k\}_{k\geq0}\subset L_\Phi(\M)$ be a sequence of martingale differences.
\begin{enumerate}[{\rm (i)}]
\item\label{tma} If $\Phi$ is $2$-concave, then
\begin{equation}
\tau\Big(\Phi\Big(|\sum_{k\geq0}x_k|\Big)\Big)\lesssim{\color{red}\mathbb E}(\Phi(\mu(X)\chi_{(0,1)}))
+\Phi(\|X\|_{L_1+L_2}),\quad X=\sum_{k\geq0}x_k\otimes e_k.
\end{equation}
\item\label{tmb} If $\Phi$ is $2$-convex and $q$-concave for some $2<q<\infty,$ then
\begin{equation}\label{modular 2}
{\color{red}\mathbb E}(\Phi(\mu(X)\chi_{(0,1)}))+\Phi(\|X\|_{L_2})\lesssim_{\Phi}\tau\Big(\Phi\Big(|\sum_{k\geq0}x_k|\Big)\Big),\quad X=\sum_{k\geq0}x_k\otimes e_k.
\end{equation}
\end{enumerate}
\end{theorem}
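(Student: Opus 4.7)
The plan is to derive Theorem \ref{third main} from the norm inequalities of Theorem \ref{first main} by specialising the symmetric space to the Orlicz space $E=L_\Phi(0,1)$. The convexity/concavity hypotheses on $\Phi$ exactly match the interpolation assumptions of Theorem \ref{first main}: when $\Phi$ is $2$-concave one has $L_\Phi\in Int(L_1(0,1),L_2(0,1))$, so Theorem \ref{first main}(i) applies; when $\Phi$ is $2$-convex and $q$-concave for some $2<q<\infty$, one has $L_\Phi\in Int(L_2(0,1),L_q(0,1))$, so Theorem \ref{first main}(ii) applies. In each case this furnishes a one-sided Johnson--Schechtman inequality at the level of the Luxemburg norm $\|\cdot\|_{L_\Phi(\M)}$.

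The main task is then to upgrade these norm estimates to $\Phi$-moment (modular) form. For part (i), applying Theorem \ref{first main}(i) with $E=L_\Phi$ yields
$$\Big\|\sum_{k\geq0}x_k\Big\|_{L_\Phi(\M)}\lesssim_\Phi\|\mu(X)\chi_{(0,1)}\|_{L_\Phi}+\|X\|_{L_1+L_2}.$$
To convert this into modular form, I would exploit the integral representation $\tau(\Phi(|a|))=\int_0^\infty\Phi(\mu_t(a))\,dt$ together with a dilation argument: replace $\Phi$ by the family $\Phi_\lambda(t)=\Phi(\lambda t)$, all of which share the $2$-concavity of $\Phi$, apply the norm inequality uniformly over $\lambda>0$, and integrate against a scale measure determined by $\Phi$. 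An equivalent reformulation is to dyadically truncate the spectral resolution of $\sum_kx_k$ relative to its singular value function and apply the norm inequality on each block. Under this procedure, $\|\mu(X)\chi_{(0,1)}\|_{L_\Phi}$ reconstitutes as the $\Phi$-moment $\mathbb{E}(\Phi(\mu(X)\chi_{(0,1)}))$, while the $\|X\|_{L_1+L_2}$ contribution becomes the single-number $\Phi(\|X\|_{L_1+L_2})$.

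Part (ii) is handled along the same lines using Theorem \ref{first main}(ii). Here the $2$-convexity and $q$-concavity of $\Phi$ together imply that $L_\Phi$ satisfies a $\Delta_2$-type condition automatically, which simplifies the norm-to-modular conversion. The first summand on the left-hand side of \eqref{modular 2} is the modular counterpart of $\|\mu(X)\chi_{(0,1)}\|_{L_\Phi}$, while the term $\Phi(\|X\|_{L_2})$, a single-number quantity, is controlled using $2$-convexity of $\Phi$: a Jensen-type comparison allows one to dominate $\Phi$ of an $L_2$-norm by the corresponding $\Phi$-moment of the sum.

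The hard part will be the rigorous scale-invariant norm-to-modular conversion. The Luxemburg relation $\tau(\Phi(|b|/\|b\|_{L_\Phi}))\leq 1$ is only a one-sided comparison, so a careful truncation and level-set argument is needed to ensure that the constant in Theorem \ref{third main} depends only on the convexity/concavity parameters of $\Phi$ (and not on extraneous features of the Orlicz space structure). An additional subtlety in part (i) is that $\Phi$ need not satisfy the $\Delta_2$ condition globally, so the argument may require either a $\Delta_2$-approximation of $\Phi$ or a direct modular computation bypassing the Luxemburg norm entirely.
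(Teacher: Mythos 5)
There is a genuine gap: the entire plan rests on upgrading the norm inequalities of Theorem \ref{first main} (with $E=L_\Phi$) to modular form, and no valid mechanism for this upgrade is, or can be, supplied as a general principle. The paper itself exhibits the obstruction: for any $2$-convex $\Phi$ one has $L_\Phi(0,1)\in Int(L_2(0,1),L_\infty(0,1))$, so the norm inequality \eqref{norm 2} always holds, yet Proposition \ref{q-concavity necessary} shows the modular inequality \eqref{modular 2} fails whenever $\Phi$ is not $q$-concave for any finite $q$. Hence ``norm inequality $\Rightarrow$ modular inequality'' is false in this very setting, and any correct conversion must inject the $2$-concavity (in (i)) or $q$-concavity (in (ii)) quantitatively, which your outline does not do. Your concrete devices do not repair this: the family $\Phi_\lambda(t)=\Phi(\lambda t)$ satisfies $\|\cdot\|_{L_{\Phi_\lambda}}=\lambda\|\cdot\|_{L_\Phi}$, so applying the norm inequality for each $\lambda$ and ``integrating over a scale measure'' returns exactly the single inequality you started with and produces no modular information; the alternative ``dyadic truncation of the spectral resolution, apply the norm inequality on each block'' is not developed, and summing block-wise \emph{norm} estimates runs into the same scale problem (with only $p$-convexity/$q$-concavity one has $\min(\|y\|^p,\|y\|^q)\lesssim\tau(\Phi(|y|))\lesssim\max(\|y\|^p,\|y\|^q)$, and the mismatch of exponents destroys any uniform constant). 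There is also a mismatch of the quantities themselves: the norm inequality controls $\|X\|_{L_1+L_2}$ inside $\|X\|_{Z^2_{L_\Phi}}$, whereas \eqref{modular 2} requires the single-number term $\Phi(\|X\|_{L_2})$, which does not arise from any block decomposition of a norm estimate. (A minor point in the other direction: your worry in (i) that $\Phi$ may fail $\Delta_2$ is unfounded, since $2$-concavity already forces $\Phi(2t)\leq 4\Phi(t)$.)

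For contrast, the paper proves the theorem directly at the modular level, bypassing Theorem \ref{first main} as a black box. For (i), $2$-concavity is used to write $\Phi(t)\approx\int_0^\infty M(t/\lambda)\,d\nu(\lambda)$ with the fixed model function $M$ of \eqref{M}; a modular analogue of Lemma \ref{sum interpolation} is then proved for operators that are simultaneous $L_1$ and $L_2$ contractions (Lemmas \ref{m simple est}--\ref{modular left sum interpolation}) and applied to the martingale reconstruction operator $T$ from Proposition \ref{left estimate}. For (ii), the argument uses the $\Phi$-moment unconditionality/Khintchine equivalence of Bekjan--Chen (which is exactly where $2$-convexity and $q$-concavity enter), a sign-averaging convexity estimate (Lemma \ref{right c head}) to pass to the square function, and then submajorization plus Jensen (Lemma \ref{right c tail}) to extract $\mathbb{E}(\Phi(\mu(X)\chi_{(0,1)}))+\Phi(\|X\|_{L_2})$. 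Your sketch gestures at Jensen for the $\Phi(\|X\|_{L_2})$ term, which is in the right spirit, but the decisive steps (the integral representation via $M$ in (i), and the modular Khintchine equivalence plus the square-function and submajorization steps in (ii)) are absent and cannot be replaced by the proposed norm-to-modular conversion.
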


The assumptions on the Orlicz function $\Phi$ in Theorem \ref{third main} \eqref{tmb} are sharp (see Proposition \ref{q-concavity necessary}). This allows us to make some interesting comparisons between the modular inequality \eqref{modular 2} and the Orlicz norm inequality \eqref{norm 2}. Let $\Phi$ be a $2$-convex Orlicz function which fails to be $q$-concave for all finite $q.$ If $L_\Phi(0,1)$ is the Orlicz space\footnote{see Definition in Section \ref{prelims section}} associated with $\Phi,$ then $L_\Phi(0,1)\in Int(L_2(0,1),L_\infty(0,1))$ (see e.g. \cite{LSh}). Hence,  by Theorem \ref{first main} (\ref{fmb}) the inequality
$$\|\sum_{k\geq0}x_k\otimes e_k\|_{Z_{L_\Phi}^2(\M\bar{\otimes} \ell_\infty)}\lesssim_\Phi\|\sum_{k\geq0}x_k\|_{L_\Phi(\M)}$$
holds for an arbitrary sequence $(x_n)_{n\geq0}\subset L_\Phi(\M)$ of martingale differences, while the corresponding $\Phi$-moment inequality \eqref{modular 2} fails due to Proposition \ref{q-concavity necessary}. This indicates an important difference between (noncommutative and classical) Johnson-Schechtman inequalities for martingales with respect to  symmetric norms and their $\Phi$-moment analogues.

Finally, in Section 7, we obtain $\Phi$-moment Burkholder-Gundy inequalities for a $p$-convex and $q$-concave Orlicz function $\Phi.$ Our result in this section extends the main result in \cite[Theorem 5.1]{BC}. We also give some examples demonstrating that our result can be applied to a larger class of Orlicz functions than that in \cite{BC}.

\section{Preliminaries}\label{prelims section}

\subsection{Singular value function}
Let $\mathcal{M}$ be a finite (or semifinite) von Neumann algebra equipped with a faithful normal finite (or semifinite) trace $\tau$. We denote by $L_0(\mathcal{M},\tau)$, or simply $L_0(\mathcal{M})$, the family of all $\tau$-measurable operators \cite{FK}.
Recall that $e_{(s,\infty)}(|x|)$ is the spectral projection of $x\in L_0(\mathcal{M})$ associated with the interval $(s,\infty)$. For $x\in L_0(\mathcal{M})$, the generalized singular number is defined by
$$\mu(t,x)=\inf\{s>0: \tau(e_{(s,\infty)}(|x|))\leq t\}, \quad t>0.$$
The function $t\mapsto\mu(t,x)$ is decreasing and right-continuous \cite{FK}. For the case that $\M$ is the abelian von Neumann algebra $L_\infty(0,1)$ with the trace given by integration with respect to the Lebesgue measure,  $L_0(\mathcal{M})$ is the space of all measurable functions and $\mu(f)$ is the decreasing rearrangement of the measurable function $f$; see \cite{KPS,LT}.

\subsection{Symmetric operator spaces} A quasi-Banach function space $(E,\|\cdot\|_E)$ on the interval $(0,\alpha)$, $0<\alpha\leq \infty$ is called symmetric if for any $g\in E$ and for any measurable function $f$ with $\mu(f)\leq\mu(g)$, we have $f\in E$ and $\|f\|_E\leq\|g\|_E.$

For a given symmetric quasi-Banach space $(E,\|\cdot\|_E)$, we define the corresponding noncommutative space on $(\M,\tau)$ by setting
$$E(\M,\tau):=\{x\in L_0(\M,\tau):\mu(x)\in E\}.$$
Endowed with the quasi-norm $\|x\|_{E(\M)}:=\|\mu(x)\|_E,$ the space $E(\M,\tau)$ is called the noncommutative symmetric space associated with $(\M,\tau)$ corresponding to the function space $(E,\|\cdot\|_E)$. It is shown in \cite{S} that the quasi-norm space $(E(\M),\|\cdot\|_{E(\M)})$ is complete if $(E,\|\cdot\|_E)$ is complete. For every semifinite von Neumann algebra $\N$, the space $Z_E^2(\N)$ is well defined due to \cite{KS}.

The following useful construction can be found in \cite{JS}. If $E$ is a symmetric quasi-Banach space on the interval $(0,1)$, then the space $Z_E^2$ consists of all measurable functions $f\in L_1(0,\infty)+L_\infty(0,\infty)$ for which
$$\|f\|_{Z_E^2}:=\|\mu(f)\chi_{(0,1)}\|_E+\|f\|_{L_1+L_2}<\infty.$$
It is obvious that the functional $\|\cdot\|_{Z_E^2}$ is a quasi-norm on $Z_E^2.$
For $f\in L_1(0,\infty)+L_\infty(0,\infty)$ and $t>0,$ it is well known that
\begin{multline}\label{1+2}\frac14\Big(\int_0^1\mu(t,f)dt+(\int_1^\infty\mu^2(t,f)dt)^{1/2}\Big)
\\\leq \|f\|_{L_1+L_2}\leq\int_0^1\mu(t,f)dt+\Big(\int_1^\infty\mu^2(t,f)dt\Big)^{1/2}.
\end{multline}
Thus it is easy to see that
$$\|f\|_{Z_E^2}\approx\|\mu(f)\chi_{(0,1)}\|_E+\|f\chi_{[1,\infty)}\|_{L_2}.$$
In what follows, our other unexplained terminologies concerning symmetric function spaces are standard; see for example \cite{LT} for definitions of Boyd indices, $p$-convexity, etc.

\subsection{Interpolation} Let $F_1,F_2$ and $F_1{\color{red}\cap }F_2\subset E\subset F_1+F_2$ be symmetric quasi-Banach spaces. We say that $E$ is an interpolation space between $F_1$ and $F_2$ (written $E\in Int(F_1,F_2)$) if for every $T:F_1+F_2\to F_1+F_2$ such that $T:F_1\to F_1$ and $T:F_2\to F_2$ is bounded, we also have $T:E\to E$ is bounded. We refer to \cite{KPS} and \cite{KM} for some necessary background on interpolation.
For $1\leq r<\infty,$ let $E_{(r)}$ be $r$-concavfication of $E$, that is, $\|f\|_{E_{(r)}}=\||f|^{1/r}\|_E^r.$ It is proved \cite[Theorem 4.23]{D1} that
\begin{equation}\label{r concavification}
E\in Int(L_p,L_q)\Longleftrightarrow E_{(r)}\in Int(L_{\frac pr},L_{\frac qr}).
\end{equation}
{\color{red}Similarly, let $E^{(r)}$ be $r$-convexification of $E$, that is, $\|f\|_{E^{(r)}}=\||f|^r\|_E^{1/r}.$}
\subsection{Orlicz functions and Orlicz spaces} Let $\Phi:\mathbb{R}\to\mathbb{R}_+$ be an Orlicz function, that is, $\Phi$ is an even convex function such that $\Phi(0)=0$ and $\Phi(\infty)=\infty.$ Given an Orlicz function $\Phi$ and an operator $x\in L_0(\M,\tau)$, we have by \cite[Corollary 2.8]{FK},
$$\tau(\Phi(|x|))=\int_0^\infty\Phi\big(\mu(t,x)\big)dt.$$ Obviously, if $\Phi(t)=t^p$ for $1\leq p<\infty$ then this reduces to the usual $p$-moment of $|x|.$ By induction, it follows from {\color{red}\cite[Theorem 4.4 (iii)]{FK}} that for every sequence $(x_i)_{i=0}^n\subset L_0(\M,\tau)$ and scalars $\lambda_i\in (0,1)$ with $\sum_{i=0}^n\lambda_i\leq1$,
\begin{equation}\label{convex phi}
\tau\Big(\Phi\Big(|\sum_{i=0}^n\lambda_ix_i|\Big)\Big)\leq \sum_{i=0}^n\lambda_i\tau\Big(\Phi(|x_i|)\Big).
\end{equation}
We now recall the definition of Orlicz spaces. Given an Orlicz function $\Phi$, the Orlicz function space $L_\Phi(0,\alpha)$, $0<\alpha\leq\infty$ is the set of all measurable functions $f$ on $(0,\alpha)$ such that
$$\|f\|_{L_\Phi}:=\inf\Big\{\lambda>0,\int_0^\infty\Phi\Big(\frac{|f(t)|}\lambda\Big)\leq1{\color{red}\Big\}}.$$
The Banach space $(L_\Phi(0,\alpha),\|\cdot\|_{L_\Phi})$ has the Fatou property (see \cite[p.64]{KPS}) and hence is a fully symmetric Banach space in the sense of \cite{DDP-Int}. An Orlicz function is called to satisfy $\Delta_2$-condition if there exists a constant $a>1$ such that $\Phi(at)\lesssim_a \Phi(t)$ for all $t>0.$ Note that if $\Phi$ satisfies $\Delta_2$-condition, then $f\in L_\Phi(0,\alpha)$ if and only if $\int_0^\infty\Phi(|f|)<\infty.$
If $\Phi$ satisfy the $\Delta_2$-condition, then by \eqref{convex phi} we have
\begin{equation}\label{quasi-trangle}
\tau(\Phi(x+y))\lesssim_\Phi\tau(\Phi(x))+\tau(\Phi(y)),\quad x,y \in L_\Phi(\M).
\end{equation}

Given $1\leq p\leq q\leq\infty$, an Orlicz function $\Phi$ is said to be $p$-convex if the function $t\to\Phi(t^{\frac 1p}), t>0$ is convex, and $\Phi$ is said to be $q$-concave if the function $t\to\Phi(t^{\frac1q}), t>0$ is concave. If $\Phi$ is $p$-convex and $q$-concave for $1\leq p\leq q\leq \infty,$ then the associated Orlicz space $L_\Phi$ is $p$-convex and is $q$-concave \cite[Theorem 50]{JSZ}, and hence the Boyd indices of $L_\Phi$ satisfy
$p\leq p_{L_\Phi}\leq q_{L_\Phi}\leq q$ (see \cite{LT} or \cite[Lemma 4.9]{D1}), which also implies that the standard {\rm Matuzewska-Orlicz indices } of the Orlicz function $\Phi$ satisfy $p\leq p_{\Phi}\leq q_{\Phi}\leq q$ \cite[Theorem 4.2]{Mali thesis}. It also follows from \cite[Theorem 3.2 (b)]{Mali thesis} that the $\Delta_2$-condition is equivalent to $q_\Phi<\infty.$

\subsection{Hardy-Littlewood preorder}
The following notion is used in the sequel.
\begin{definition} Let $\M$ be a semifinite von Neumann algebra and $x,y\in L_1(\M)+\M.$ We write $y\prec\prec x$ if
$$\int_0^t\mu(s,y)ds\leq\int_0^t\mu(s,x)ds,\quad t>0.$$
\end{definition}

It is proved in \cite[Theorem II.3.2]{KPS} that $y\prec\prec x$ if and only if there exists a $T:L_1+L_{\infty}\to L_1+L_{\infty}$ such that $T:L_1\to L_1,$ $T:L_{\infty}\to L_{\infty}$ are contractions and such that $Tx=y.$

We say that $E$ is fully symmetric if $\|\cdot\|_E$ is monotone with respect to $\prec\prec.$ The following result is due to Calder\'on and Mityagin (see \cite[Theorem II.3.4]{KPS}).

\begin{theorem} Symmetric quasi-Banach function space $E\in Int(L_1,L_{\infty})$ iff it is fully symmetric.
\end{theorem}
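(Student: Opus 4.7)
The plan is to use Theorem II.3.2 of \cite{KPS} (the Calder\'on characterisation of $\prec\prec$ by pairs of endpoint contractions, recalled just above) as the pivot that converts between the abstract interpolation property of $E$ and the concrete full symmetry of its quasi-norm. Both directions are essentially direct applications of that characterisation, except for a single sharp-constant issue in the necessity half.

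For the sufficiency ($\Leftarrow$), suppose $\|\cdot\|_E$ is monotone with respect to $\prec\prec$. Given any bounded operator $T:L_1+L_\infty\to L_1+L_\infty$ whose restrictions to $L_1$ and $L_\infty$ are bounded, set
$$M:=\max\bigl(\|T\|_{L_1\to L_1},\,\|T\|_{L_\infty\to L_\infty}\bigr).$$
Then $T/M$ is a contraction on each endpoint, so Theorem II.3.2, applied to $T/M$, yields $(1/M)Tx\prec\prec x$ for every $x\in L_1+L_\infty$. Full symmetry of $\|\cdot\|_E$ now gives $\|Tx\|_E\leq M\|x\|_E$, so $T$ restricts to a bounded operator on $E$ and $E\in\mathrm{Int}(L_1,L_\infty)$.

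For the necessity ($\Rightarrow$), assume $E\in\mathrm{Int}(L_1,L_\infty)$ and take any pair $y\prec\prec x$ with $x\in E$. Theorem II.3.2 produces a single operator $T$ that is a contraction on both $L_1$ and $L_\infty$ with $Tx=y$. The interpolation hypothesis forces $y=Tx\in E$ with some a priori bound $\|y\|_E\leq C_E\|x\|_E$. The main obstacle is to sharpen this interpolation constant $C_E$ to the constant $1$ demanded by the definition of full symmetry. The standard workaround exploits the symmetry of $\|\cdot\|_E$: one represents every admissible $y$ with $y\prec\prec x$ as a limit (in measure) of elementary Hardy-Littlewood averages $S_n x$, where each $S_n$ is a finite convex combination of measure-preserving permutations together with dyadic Hardy averagings. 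Each such $S_n$ is simultaneously an isometry on $L_1$ and on $L_\infty$, hence acts as a $\|\cdot\|_E$-contraction on any symmetric quasi-Banach space directly from the definition of a symmetric quasi-norm, bypassing the interpolation constant. A Fatou-type passage to the limit (which is the most delicate point in the quasi-Banach setting, since duality and convex-combination arguments within the unit ball are no longer automatically available) then upgrades the estimates $\|S_n x\|_E\leq \|x\|_E$ to $\|y\|_E\leq\|x\|_E$, completing the proof.
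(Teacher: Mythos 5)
The paper does not actually prove this statement: it is quoted as the Calder\'on--Mityagin theorem from \cite[Theorem II.3.4]{KPS}, and when it is invoked later (in the proof of Proposition \ref{lemma 2-4}) the conclusion is used only ``up to a constant''. Your sufficiency half ($\Leftarrow$) is correct and is the standard argument: normalise $T$ by $M$, apply the ``if'' part of Theorem II.3.2 to get $(1/M)Tx\prec\prec x$, and use $\prec\prec$-monotonicity of $\|\cdot\|_E$; the only omission is the verification of the inclusions $L_1\cap L_\infty\subset E\subset L_1+L_\infty$ required by the paper's definition of $Int$, which is easy but should be said.

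The necessity half ($\Rightarrow$) has a genuine gap, concentrated exactly where you locate the difficulty. First, the ``a priori bound $\|y\|_E\le C_E\|x\|_E$'' with a constant uniform over all simultaneous contractions is not part of the definition of $Int$ (which only asserts boundedness of each individual $T$); it requires a closed-graph/uniform-boundedness argument, available because quasi-Banach spaces are $F$-spaces, but it must be made. Second, and more seriously, your mechanism for upgrading $C_E$ to $1$ does not work in the quasi-Banach setting: a finite convex combination of measure-preserving rearrangements and dyadic averagings is indeed a simultaneous $L_1$- and $L_\infty$-contraction, but it is \emph{not} a contraction on $E$ ``directly from the definition of a symmetric quasi-norm''. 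The quasi-triangle inequality carries a constant, and the averaging operators do not satisfy $\mu(S_nx)\le\mu(x)$ pointwise (average $2\chi_{(0,1/2)}$ over $(0,1)$), so plain symmetry gives nothing; their contractivity on $E$ is precisely the $\prec\prec$-monotonicity you are trying to prove, making the step circular. The closing ``Fatou-type passage to the limit'' is also unjustified, since a symmetric quasi-Banach space need not have the Fatou property, so $\|S_nx\|_E\le\|x\|_E$ and $S_nx\to y$ in measure do not yield $y\in E$ or $\|y\|_E\le\|x\|_E$. In fact the constant-$1$ conclusion is false in general for quasi-norms: on $L_1(0,1)$ the functional $\rho(x)=\|x\|_1+\bigl(\int_0^1|x|^{1/2}\,dt\bigr)^2$ is a symmetric quasi-norm equivalent to $\|\cdot\|_1$, so $(L_1,\rho)\in Int(L_1,L_\infty)$, yet with $x=\chi_{(0,1)}\prec\prec y=2\chi_{(0,1/2)}$ one has $\rho(x)=2>\rho(y)=3/2$. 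So the theorem should be read (as the paper itself reads it) with monotonicity up to a constant, and then necessity is immediate: establish the uniform constant $C_E$, take the operator $T$ with $Tx=y$ furnished by Theorem II.3.2, and conclude $\|y\|_E\le C_E\|x\|_E$; no approximation by elementary averages is needed.
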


It follows from \cite[Theorem 11]{DSZ} that for every Orlicz function $\Phi$
\begin{equation}\label{majorization phi}
x\prec\prec y\Longrightarrow {\tau}(\Phi(x))\leq {\tau}(\Phi(y)),\quad \forall x,y\in L_1(\M)+\M.
\end{equation}

If $x,y,z\in L_1(\M)+\M$, then $x\otimes z, y\otimes z \in L_1(\M\bar{\otimes}\M)+\M\bar{\otimes} \M$. It follows from above that if $x,y,z\in L_1(\M)+\M,$ then
\begin{equation}\label{majorization direct}
y\prec\prec x\Longrightarrow y\otimes z\prec\prec x\otimes z.
\end{equation}

\subsection{Noncommutative matringales} A noncommutative probability space is a couple $(\mathcal{M},\tau)$ where $\M$ is a finite von Neumann algebra and {\color{red} $\tau$ is a normal faithful trace with $\tau(1)=1.$} Let $(\mathcal{M}_n)_{n\geq0}$ be an increasing sequence of von Neumann subalgebras of $\mathcal{M}$ such that the union of the $\mathcal{M}_n$'s is weak$^*$-dense in $\mathcal{M}.$ Let $\mathcal{E}_n$ be the conditional expectation of $\mathcal{M}$ with respect to $\mathcal{M}_n.$

\begin{definition} A sequence $x=(x_k)_{k\geq 0}$ in $L_1(\mathcal{M})$ is a sequence of martingale differences if $x_k\in\mathcal{M}_k$ for $k\geq0$ and if $\mathcal{E}_{k-1}(x_k)=0$ for every $k\geq1.$
\end{definition} In this paper, we always consider noncommutative martingales associated with a noncommutative probability space unless explicit explanation.

\section{Disjointification inequalities in symmetric operator spaces}\label{js section}

In this section we establish disjointification inequalities for noncommutative martingale difference sequences in symmetric operator spaces. We first state several lemmas.

\begin{lemma}\label{sum interpolation} Let $(\mathcal{M},\tau)$ be a finite von Neumann algebra and let $(\mathcal{N},\nu)$ be a semifinite atomless one. Suppose that $E\in Int(L_1(0,1), L_2(0,1))$ is a symmetric quasi-Banach space. If $T:L_2(\N)\to L_2(\M)$ and $T:L_1(\N)\to L_1(\M)$ is a linear contraction, then
$$\|Tx\|_{E(\M)}\lesssim_E\|x\|_{Z_E^2(\N)},\quad \forall x\in Z_E^2(\N).$$
\end{lemma}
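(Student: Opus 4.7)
The plan is to decompose $x \in Z_E^2(\N)$ into a ``large'' part $y$ whose singular value function is supported in $(0,1)$ and a uniformly bounded ``small'' part $z$, then to estimate $\|Ty\|_{E(\M)}$ via an interpolation argument and $\|Tz\|_{E(\M)}$ directly from the $L_2$-contractivity of $T$.

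For the decomposition, first set $\lambda := \mu(1,x)$ and let $e := e_{(\lambda,\infty)}(|x|)$, so that $\tau(e) \leq 1$. Put $y := xe$ and $z := x(1-e)$. Then $|y| = |x|e$ satisfies $\mu(t,y) = \mu(t,x)\chi_{[0,\tau(e))}(t)$, and $|z| = |x|(1-e)$ has $\| |z| \|_\infty \leq \lambda$. From $E \in Int(L_1(0,1), L_2(0,1))$ one has the continuous embeddings $L_2(0,1) \hookrightarrow E \hookrightarrow L_1(0,1)$, whence
\[
\lambda = \mu(1,x) \leq \int_0^1 \mu(s,x)\,ds \lesssim_E \|\mu(x)\chi_{(0,1)}\|_E \leq \|x\|_{Z_E^2(\N)}.
\]
Combined with \eqref{1+2} this gives $\|z\|_{L_2(\N)} \lesssim_E \|x\|_{Z_E^2(\N)}$, while by construction $\|y\|_{E(\N)} = \|\mu(y)\|_E \leq \|\mu(x)\chi_{(0,1)}\|_E \leq \|x\|_{Z_E^2(\N)}$. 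Since $\M$ is finite, the scalar embedding $L_2(0,1) \hookrightarrow E$ descends to $L_2(\M) \hookrightarrow E(\M)$, so the $L_2$-contractivity of $T$ yields
\[
\|Tz\|_{E(\M)} \lesssim_E \|Tz\|_{L_2(\M)} \leq \|z\|_{L_2(\N)} \lesssim_E \|x\|_{Z_E^2(\N)}.
\]

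The main step is to bound $\|Ty\|_{E(\M)}$ through the $K$-method. For every decomposition $y = y_1 + y_2 \in L_1(\N) + L_2(\N)$ and every $t > 0$, the $L_p$-contractivity of $T$ ($p=1,2$) gives $\|Ty_1\|_{L_1(\M)} + t\|Ty_2\|_{L_2(\M)} \leq \|y_1\|_{L_1(\N)} + t\|y_2\|_{L_2(\N)}$, hence
\[
K(t, Ty; L_1(\M), L_2(\M)) \leq K(t, y; L_1(\N), L_2(\N)).
\]
Because $\mu(y)$ is supported in $(0,1)$ by construction and $\mu(Ty)$ is supported in $(0,1)$ since $\M$ is finite, Holmstedt's formula identifies both sides, up to universal constants, with the scalar $K$-functionals $K(t, \mu(y); L_1(0,1), L_2(0,1))$ and $K(t, \mu(Ty); L_1(0,1), L_2(0,1))$. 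Invoking the $K$-monotonicity of the couple $(L_1(0,1), L_2(0,1))$---the $(L_1, L_2)$ analogue of the Calder\'on--Mityagin theorem recalled in Section~2.5, valid for any (quasi-Banach) interpolation space $E$ in this couple---we conclude $\|\mu(Ty)\|_E \lesssim_E \|\mu(y)\|_E$, that is, $\|Ty\|_{E(\M)} \lesssim_E \|y\|_{E(\N)} \leq \|x\|_{Z_E^2(\N)}$. Combining the bounds on $Ty$ and $Tz$ with the quasi-triangle inequality in $E$ completes the proof. The main obstacle lies precisely in this interpolation step: one must transfer the scalar $K$-monotonicity to the noncommutative setting via singular value functions and verify its applicability to the quasi-Banach space $E$.
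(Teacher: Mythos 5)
Your proof is correct, but it takes a genuinely different route from the paper's at the crucial step. The paper first reduces to $x=x^*$, sets $p=e_{(\mu(1,x),\infty)}(|x|)$, uses the atomlessness of $\N$ to enlarge $p$ to a projection $q$ with $\nu(q)=1$, splits $Tx=T(qxq)+T(qx(1-q))+T((1-q)x)$, and treats the corner term by applying the Dodds--Dodds--de Pagter transfer theorem \cite{DDP-Int} to $T$ restricted to the trace-one algebra $q\N q$ (this is exactly why the trace-one reduction is made: so that $E(q\N q)$ corresponds to $E$ on $(0,1)$); the off-diagonal terms are handled in $L_2$, just as you handle $z$. You instead use the one-sided spectral cut $x=xe+x(1-e)$ (no self-adjoint reduction, no use of atomlessness of $\N$) and replace the citation of \cite{DDP-Int} by a $K$-functional transfer: contractivity gives $K(t,Ty;L_1(\M),L_2(\M))\leq K(t,y;L_1(\N),L_2(\N))$, the noncommutative Holmstedt formula expresses both sides through $\mu(Ty)$ and $\mu(y)$ (both supported in $(0,1)$ because $\tau(1)=1$ and $\nu(e)\leq1$), and relative $K$-monotonicity of the couple $(L_1(0,1),L_2(0,1))$ gives $\|\mu(Ty)\|_E\lesssim_E\|\mu(y)\|_E$. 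This works, but be aware of what it silently invokes: (i) the $K$-functional identification for $(L_1(\M),L_2(\M))$ needs a reference (it follows from $K(t,\cdot\,;L_1,L_{\infty})=\int_0^t\mu(s,\cdot)\,ds$ plus Holmstedt reiteration); (ii) the $K$-monotonicity you use is not the Calder\'on--Mityagin theorem recalled in Section~2.5 (which is for $(L_1,L_{\infty})$) but the deeper Lorentz--Shimogaki/Sparr theorem for $(L_1,L_q)$, available in the paper's reference \cite{LSh}; (iii) to get a constant depending only on $E$ you need that a complete quasi-Banach interpolation space of a Banach couple is automatically a uniform interpolation space (closed-graph argument). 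In exchange, your argument avoids both the self-adjoint splitting and the atomlessness hypothesis on $\N$, whereas the paper's argument needs only the single transfer theorem of \cite{DDP-Int} and elementary $L_2$ estimates.
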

\begin{proof} To see this, fix $x=x^*\in Z_E^2(\N)$ and consider the spectral projection of $|x|$ associated with the interval $(\mu(1,x),\infty)$,
$$p:=e_{(\mu(1,x),\infty)}(|x|).$$
Clearly, $\nu(p)\leq 1.$ Since $\N$ is atomless, we can fix a projection $q\in \N$ such that $q\geq p$ and $\nu(q)=1.$ By assumptions, $T:L_1(q\N q)\to L_1(\M)$ and $T:L_2(q\N q)\to L_2(\M)$ is a contraction. Since $E$ is an interpolation space between $L_1(0,1)$ and $L_2(0,1),$
 it follows from \cite[Theorem 3.2]{DDP-Int} that $T:E(q\N q)\to E(\M)$ is a bounded map. Since $\mu(t,qxq)=0$ for all $t$ greater than trace of $q,$ we have  $$\mu(qxq)=\mu(qxq)\chi_{(0,1)}\leq \mu(x)\chi_{(0,1)}.$$ On the other hand, $1-q\leq1-p$ and therefore by \eqref{1+2},
\begin{eqnarray*}\|qx(1-q)\|^2_{L_2(\N)}&\leq& \|x(1-q)\|^2_{L_2(\N)}=\nu(x(1-q)x)\leq\nu(x(1-p)x)
\\&=&\|x(1-p)\|^2_{L_2(\N)}=\int_{\{\mu(x)\leq\mu(1,x)\}}\mu^2(t,x)dt
\\&\leq&\|\mu(x)\chi_{(1,\infty)}\|_2^2+\mu^2(1,x)\lesssim\|\mu(x)\|^2_{L_1+L_2}.
\end{eqnarray*}
Similarly, $$\|(1-q)x\|^2_{L_2(\N)}\lesssim\|\mu(x)\|^2_{L_1+L_2}.$$
Consequently, taking into account that $\|\cdot\|_E\leq \|\cdot\|_2$ we have
\begin{eqnarray*}
\|Tx\|_{E(\M)}&\leq&\|T(qxq)\|_{E(\M)}+\|T(qx(1-q))\|_{E(\M)}+\|T((1-q)x)\|_{E(\M)}
\\&\leq&\|T(qxq)\|_{E(\M)}+\|T(qx(1-q))\|_{L_2(\M)}+\|T((1-q)x)\|_{L_2(\M)}
\\&\lesssim_E&\|qxq\|_{E(q\N q)}+\|qx(1-q)\|_{L_2(\N)}+\|(1-q)x\|_{L_2(\N)}
\\&\lesssim&\|\mu(x)\chi_{(0,1)}\|_E+\|\mu(x)\|_{L_1+L_2}\lesssim\|x\|_{ Z_E^2(\N)}.
\end{eqnarray*}
Hence,  $$\|Tx\|_{E(\M)}\leq \|T(\Re(x))\|_{E(\M)}+\|T(\Im(x))\|_{E(\M)}\lesssim_E\|x\|_{Z_E^2(\N)},\quad \forall x\in Z_E^2(\N).$$
\end{proof}

The following proposition extends the result of \cite[Lemma 3.5]{ASS} to noncommutative setting. The assumption that $E$ is a separable symmetric Banach space was used in \cite{ASS}.
\begin{proposition}\label{left estimate} Let $E\in Int(L_1(0,1),L_2(0,1))$ be a symmetric quasi-Banach space and let $(x_k)_{k\geq0}\subset E(\M)$ be a sequence of martingale differences. We have
$$\|\sum_{k\geq0}x_k\|_{E(\M)}\lesssim_E\|\sum_{k\geq0}x_k\otimes e_k\|_{Z_E^2(\M\bar{\otimes} \ell_\infty)}.$$
\end{proposition}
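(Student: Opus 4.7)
The plan is to reduce the proposition to Lemma \ref{sum interpolation} by constructing an operator that maps $\sum_k x_k\otimes e_k$ to $\sum_k x_k$ and is simultaneously bounded on $L_1$ and $L_2$.

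First, since the lemma requires the source algebra to be semifinite and atomless, I replace $\M\bar{\otimes}\ell_\infty$ by $\N:=\M\bar{\otimes}L_\infty(0,\infty)$, equipped with $\tau\otimes m$, and realise $\M\bar{\otimes}\ell_\infty$ as the subalgebra $\mathcal A$ generated by functions constant on each interval $[k,k+1)$, via $e_k\leftrightarrow\chi_{[k,k+1)}$. Since $\|\cdot\|_{Z_E^2}$ depends only on the singular value function, the $Z_E^2(\mathcal A)$-norm of $\sum_k x_k\otimes \chi_{[k,k+1)}$ coincides with the $Z_E^2(\M\bar{\otimes}\ell_\infty)$-norm of $\sum_k x_k\otimes e_k$.

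Next, I construct $T:L_1(\N)+L_2(\N)\to L_1(\M)+L_2(\M)$ as a composition $T=S\circ P$, where $P$ is the (trace-preserving) conditional expectation of $\N$ onto $\mathcal A$, and where $S$ acts on $\mathcal A$ by
$$S\Big(\sum_{k\geq 0} y_k\otimes\chi_{[k,k+1)}\Big):=\sum_{k\geq 0}(\mathcal E_k-\mathcal E_{k-1})y_k.$$
The operator $P$ is an $L_p$-contraction for every $p$. For $S$: on $L_1$ the triangle inequality together with $\|(\mathcal E_k-\mathcal E_{k-1})y_k\|_1\leq 2\|y_k\|_1$ give norm at most $2$; on $L_2$ the operators $\mathcal E_k-\mathcal E_{k-1}$ are mutually orthogonal projections and $(\mathcal E_k-\mathcal E_{k-1})y_k$ is a martingale difference sequence, hence
$$\Big\|\sum_{k\geq 0}(\mathcal E_k-\mathcal E_{k-1})y_k\Big\|_2^2=\sum_{k\geq 0}\|(\mathcal E_k-\mathcal E_{k-1})y_k\|_2^2\leq\sum_{k\geq 0}\|y_k\|_2^2=\Big\|\sum_{k\geq 0}y_k\otimes\chi_{[k,k+1)}\Big\|_2^2,$$
so $S$ is an $L_2$-contraction. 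After rescaling $T$ by a factor of $2$, the hypotheses of Lemma \ref{sum interpolation} are met, whence $\|Tf\|_{E(\M)}\lesssim_E\|f\|_{Z_E^2(\N)}$ for every $f\in Z_E^2(\N)$.

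Finally, apply this with $f=\sum_k x_k\otimes\chi_{[k,k+1)}$. Because $f\in\mathcal A$ and $(x_k)$ is a martingale difference sequence, $Pf=f$ and $(\mathcal E_k-\mathcal E_{k-1})x_k=x_k$, so $Tf=\sum_k x_k$; translating the norm of $f$ back to the $\ell_\infty$ picture yields the claim. The main obstacle is that the naive operator \emph{sum the tensor coordinates} is not $L_2$-bounded on arbitrary inputs; the role of the martingale-difference projection $\mathcal E_k-\mathcal E_{k-1}$ is precisely to enforce orthogonality of the summands, restoring $L_2$-contractivity at the price only of a harmless constant on $L_1$.
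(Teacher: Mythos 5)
Your proof is correct and follows essentially the same route as the paper: the same operator sending $\sum_k y_k\otimes e_k$ (resp.\ its continuous analogue) to $\sum_k(\mathcal{E}_k-\mathcal{E}_{k-1})y_k$, with the same $L_1$- and $L_2$-boundedness observations, fed into Lemma \ref{sum interpolation} and then evaluated on the martingale difference sequence. The only deviation is how the atomlessness hypothesis of that lemma is arranged — the paper replaces $\M$ by $\M\bar{\otimes}L_\infty(0,1)$ and the $x_k$ by $x_k\otimes 1$, while you fatten the source algebra to $\M\bar{\otimes}L_\infty(0,\infty)$ and precompose with the conditional expectation onto the discretised subalgebra — and both devices are valid.
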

\begin{proof} {\color{red} Without loss of generality, we may assume that $\mathcal{M}$ is atomless. Indeed, otherwise, we consider algebra $\mathcal{M}\bar{\otimes} L_{\infty}(0,1)$ and a sequence $\{x_k\otimes 1\}_{k\geq0}.$}

Consider the operator
$$T:\sum_{k\geq0}x_k\otimes e_k\to\sum_{k\geq0}\Big(\mathcal E_k(x_k)-\mathcal{E}_{k-1}(x_k)\Big),\quad \forall\, (x_k)_{k\geq0}\subset (L_1+L_2)(\M).$$
We claim that $T$ is a bounded map from $L_2(\M\bar{\otimes}\ell_\infty)$ to $L_2(\M).$ In fact $\{\mathcal{E}_k(x_k)-\mathcal{E}_{k-1}(x_k)\}_{k\geq0}$ is an orthogonal sequence and, therefore,
$$\|T(\sum_{k\geq0}x_k\otimes e_k)\|_2^2=\sum_{k\geq0}\|\mathcal E_k(x_k)-\mathcal{E}_{k-1}(x_k)\|_2^2\leq 4\|\sum_{k\geq0}x_k\otimes e_k\|_2^2.$$
We also claim that $T:L_1(\M\bar{\otimes}\ell_\infty)\to L_1(\M)$ is bounded. Indeed, we have
$$\|T(\sum_{k\geq0}x_k\otimes e_k)\|_1\leq\sum_{k\geq0}\|\mathcal E_k(x_k)-\mathcal{E}_{k-1}(x_k)\|_1\leq 2\|\sum_{k\geq0}x_k\otimes e_k\|_1.$$
Using Lemma \ref{sum interpolation}, we obtain that $T:Z_E^2(\M\bar{\otimes}\ell_\infty)\to E(\M)$ and, moreover,
$$\|Tx\|_{E(\M)}\lesssim_E\|x\|_{Z_E^2(\M\bar{\otimes}\ell_\infty)},\quad x\in Z_E^2(\M\bar{\otimes}\ell_\infty).$$
In particular, if $(x_k)_{k\geq0}\subset E(\M)$ is a sequence of martingale differences, then
$$\|\sum_{k\geq0}x_k\|_{E(\M)}=\|T(\sum_{k\geq0}x_k\otimes e_k)\|_{E(\M)}\lesssim_E\|\sum_{k\geq0}x_k\otimes e_k\|_{Z_E^2(\M\bar{\otimes}\ell_\infty)}.$$
This completes the proof.
\end{proof}

\begin{lemma}\label{intersection interpolation} Let $(\mathcal{M},\tau)$ be a finite von Neumann algebra and let $(\mathcal{N},\nu)$ be a semifinite atomless one. Let $E\in Int(L_2(0,1),L_\infty(0,1))$ be a symmetric quasi-Banach space. If $T:L_2(\M)\to L_2(\N)$ and $T:L_{\infty}(\M)\to L_{\infty}(\N)$ is a linear contraction which maps self-adjoint operators to self-adjoint ones, then
$$\|Tx\|_{Z_E^2(\N)}\lesssim_E\|x\|_{E(\M)},\quad x\in E(\M).$$
\end{lemma}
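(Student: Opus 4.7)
I would adapt the proof of Lemma \ref{sum interpolation} by applying the splitting to the image $y=Tx$ in $\N$ rather than to the pre-image in $\M$. Since $T$ preserves self-adjointness, writing $x=\Re(x)+i\Im(x)$ and using the quasi-triangle inequality in $E$ reduces the claim to the case $x=x^*$, in which case $y=y^*$ as well. The $L_1+L_2(\N)$ component of the $Z_E^2$-norm is then immediate: $E\in Int(L_2(0,1),L_\infty(0,1))$ forces the continuous inclusion $E(\M)\hookrightarrow L_2(\M)$ with some constant $c_E$, the $L_2$-contractivity of $T$ gives $\|Tx\|_{L_2(\N)}\leq c_E\|x\|_{E(\M)}$, and \eqref{1+2} yields $\|Tx\|_{L_1+L_2}\lesssim c_E\|x\|_{E(\M)}$. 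The same bound also supplies the key pointwise estimate $\mu(1,Tx)\leq c_E\|x\|_{E(\M)}$.

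To bound $\|\mu(Tx)\chi_{(0,1)}\|_E$, set $p:=e_{(\mu(1,y),\infty)}(|y|)$ so that $\nu(p)\leq 1$, and use atomlessness of $\N$ to choose a projection $q\in\N$ with $p\leq q$ and $\nu(q)=1$. I decompose $y=qyq+(1-q)yq+y(1-q)$. For the first piece, the operator $T_q(\cdot):=qT(\cdot)q$ is a contraction from $L_2(\M)$ to $L_2(q\N q)$ and from $L_\infty(\M)$ to $L_\infty(q\N q)$ into a finite algebra of trace $1$, so \cite[Theorem 3.2]{DDP-Int} gives $\|qyq\|_{E(q\N q)}\lesssim_E\|x\|_{E(\M)}$, which equals $\|\mu(qyq)\chi_{(0,1)}\|_E$ because $\mu(qyq)$ is supported in $(0,1)$. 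For $(1-q)yq$ the identity $|(1-q)yq|^2=qy^*(1-q)yq$ puts $|(1-q)yq|$ inside $q\N q$, so $\mu((1-q)yq)$ is supported in $(0,1)$ and pointwise majorised by $\mu(1,y)$. For $y(1-q)$, since $1-q\leq 1-p$ the spectral bound $(1-q)|y|^2(1-q)\leq\mu(1,y)^2(1-q)$ gives $\|y(1-q)\|_\infty\leq\mu(1,y)$, hence $\mu(y(1-q))\chi_{(0,1)}\leq\mu(1,y)\chi_{(0,1)}$. In both latter cases the $E$-norm is at most $\mu(1,y)\|\chi_{(0,1)}\|_E\lesssim_E\|x\|_{E(\M)}$. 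Assembling the three contributions via the pointwise inequality $\mu(y)(t)\leq\sum_{i=1}^3\mu(y_i)(t/3)$, the quasi-triangle inequality in $E$, and the boundedness of the dilation $D_3$ on $E$ (all absorbed into the constant $\lesssim_E$) finishes the proof.

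The main technical point is that the off-diagonal term $(1-q)yq$ need not a priori have its singular value function supported in $(0,1)$, since $(1-q)\N(1-q)$ has infinite $\nu$-trace; the identity $|(1-q)yq|^2=qy^*(1-q)yq$ is what restores the required compactness. The self-adjointness reduction is essential here, as it allows the single projection $q$ arising from the spectral decomposition of $|y|=|y^*|$ to simultaneously control both $(1-q)yq$ and $y(1-q)$, avoiding the need for separate projections attached to $|y|$ and $|y^*|$.
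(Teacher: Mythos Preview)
Your proof is correct and follows essentially the same strategy as the paper's: reduce to self-adjoint $x$, choose a trace-$1$ projection $q\geq e_{(\mu(1,Tx),\infty)}(|Tx|)$, interpolate the compressed map $z\mapsto qTzq$ via \cite[Theorem 3.2]{DDP-Int}, and control the off-diagonal pieces through the $L_\infty$-bound $\mu(1,Tx)$ together with the $L_2$-norm. The only cosmetic differences are that the paper uses the mirror decomposition $Tx=qTxq+qTx(1-q)+(1-q)Tx$ and applies the quasi-triangle inequality directly in $Z_E^2(\N)$ (bounding the off-diagonal pieces by their $L_2\cap L_\infty$-norm) rather than your dilation argument on $\mu(Tx)\chi_{(0,1)}$.
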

\begin{proof}  To see this, fix $x=x^*\in E(\M)$ and consider the spectral projection of $|Tx|$ associated with the interval $(\mu(1,Tx),\infty)$,
$$p:=e_{(\mu(1,Tx),\infty)}(|Tx|).$$
It is clear that $\nu(p)\leq1.$ Fix a projection $q\in N$ such that $q\geq p$ and $\nu(q)=1.$ Consider the operator
$$S:z\to q\cdot Tz\cdot q,\quad z\in (L_2+L_\infty)(\M).$$
Using the assumption on $T$, we immediately obtain that $S:L_2(\M)\to L_2(q\N q)$ and $S:L_{\infty}(\M)\to L_{\infty}(q\N q)$ are also contractions. Since $E$ is an interpolation space between $L_2(0,1)$ and $L_{\infty}(0,1),$
 it follows from \cite[Theorem 3.2]{DDP-Int} that $S:E(\M)\to E(q\N q)$ is a bounded map.
 Noting that $\mu(q\cdot Tx\cdot q)=\mu(q\cdot Tx\cdot q)\chi_{(0,1)}$ and
 $$\|(1-q)\cdot Tx\|_{L_{\infty}(\N)}\leq \|(1-p)\cdot Tx\|_{L_{\infty}(\N)}\leq\mu(1,Tx),$$
 we have
\begin{eqnarray*}\|Tx\|_{Z_E^2(\N)}&\leq&\|q\cdot Tx\cdot q\|_{Z_E^2(\N)}+\|q\cdot Tx\cdot (1-q)\|_{Z_E^2(\N)}+\|(1-q)\cdot Tx\|_{Z_E^2(\N)}
\\&\leq&\|Sx\|_{E(q\N q)}+\|q\cdot Tx\cdot q\|_2+\|q\cdot Tx\cdot (1-q)\|_{(L_2\cap L_{\infty})(\N)}
\\&+&\|(1-q)\cdot Tx\|_{(L_2\cap L_{\infty})(\N)}
\leq\|Sx\|_{E(q\N q)}+3\|Tx\|_{L_2(\N)}\\&+&2\|(1-q)\cdot Tx\|_{L_{\infty}(\N)}\lesssim_E\|Sx\|_{E(q\N q)}+3\|Tx\|_{L_2(\N)}+2\mu(1,Tx)\\&\leq& \|Sx\|_{E(q\N q)}+5\|Tx\|_{L_2(\N)}.
\end{eqnarray*}
Thus,
$$\|Tx\|_{Z_E^2(\N)}\lesssim_E\|x\|_{E(\M)}+5\|x\|_{L_2(\M)}\leq 6\|x\|_{E(\M)},\,\quad x=x^*\in E(\M).$$
By splitting $x$ into its real part and imaginary parts, we conclude the proof.
\end{proof}

\begin{remark}\label{q} Suppose that in Lemma \ref{intersection interpolation}, we have that $E\in Int(L_2(0,1),L_q(0,1))$ is a symmetric quasi-Banach space for some $2<q<\infty$, and that $T:L_2(\M)\to L_2(\N)$ and $T:L_{q}(\M)\to L_{q}(\N)$ are linear contractions which map self-adjoint operators to self-adjoint ones. In this case, the result of Lemma \ref{intersection interpolation} still holds. Indeed, we just need to notice that
\begin{eqnarray*}\|(1-q)\cdot Tx\|_{L_{q}(\N)}&\leq& \|(1-p)\cdot Tx\|_{L_{q}(\N)}\\&\leq&\Big(\int_1^\infty\mu^q(t,Tx)dt\Big)^{1/q}+\mu(1,Tx) \\&\lesssim&\|Tx\|_{(L_2+L_q)(\N)}\leq\|x\|_{(L_2+L_q)(\M)}\\&=&\|x\|_{L_2(\M)}\leq \|x\|_{E(\M)}.
\end{eqnarray*}
\end{remark}
The following proposition extends Theorem 3.5 and \cite[Corollary 3.6]{ASW}.

\begin{proposition}\label{right estimate} Let $E\in {\color{red}Int(L_2(0,1),L_\infty(0,1))}$ be a symmetric quasi-Banach space and let $\{x_k\}_{k\geq0}\subset E(\M)$ be a sequence of martingale differences. We have
$$\|\sum_{k\geq0}x_k\otimes e_k\|_{Z_E^2(\M{\bar\otimes} \ell_\infty)}\lesssim_E\|\sum_{k\geq0}x_k\|_{E(\M)}.$$
\end{proposition}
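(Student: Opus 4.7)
The plan is to mirror the argument of Proposition~\ref{left estimate} in the opposite direction, replacing Lemma~\ref{sum interpolation} by Lemma~\ref{intersection interpolation}. First I would assume without loss of generality that $\M$ is atomless; otherwise one replaces $\M$ by $\M\bar{\otimes}L_\infty(0,1)$ and each $x_k$ by $x_k\otimes 1$, as is done at the start of Proposition~\ref{left estimate}. This guarantees that the target algebra $\N:=\M\bar{\otimes}\ell_\infty$ is semifinite atomless, which is the hypothesis imposed on the target in Lemma~\ref{intersection interpolation}.

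The key object is the linear operator
$$S\colon X \longmapsto \sum_{k\geq 0}\bigl(\mathcal{E}_k X - \mathcal{E}_{k-1}X\bigr)\otimes e_k,\qquad X\in (L_2+L_\infty)(\M),$$
with the convention $\mathcal{E}_{-1}=0$. Since each conditional expectation preserves self-adjointness and each $e_k$ is self-adjoint, $S$ sends self-adjoint operators to self-adjoint ones. On the $L_\infty$ endpoint, the triangle inequality gives $\|SX\|_{L_\infty(\N)}=\sup_{k}\|\mathcal{E}_k X-\mathcal{E}_{k-1}X\|_\infty\leq 2\|X\|_\infty$. On the $L_2$ endpoint, the orthogonality of martingale differences (a consequence of $\mathcal{E}_{k-1}(\mathcal{E}_k X-\mathcal{E}_{k-1}X)=0$ together with the trace property) yields
$$\|SX\|_{L_2(\N)}^2 = \sum_{k\geq 0}\|\mathcal{E}_k X-\mathcal{E}_{k-1}X\|_2^2 \leq \|X\|_{L_2(\M)}^2.$$
Rescaling $S$ by a harmless factor of $1/2$ turns both estimates into genuine contractions, so the hypotheses of Lemma~\ref{intersection interpolation} are met.

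Applying that lemma produces the estimate $\|SX\|_{Z_E^2(\N)}\lesssim_E\|X\|_{E(\M)}$ for every $X\in E(\M)$. To conclude, I would specialise to $X=\sum_{k\geq 0}x_k$, which lies in $E(\M)\subset (L_2+L_\infty)(\M)$ thanks to the interpolation hypothesis $E\in Int(L_2,L_\infty)$. The martingale-difference property then forces $\mathcal{E}_k X-\mathcal{E}_{k-1}X=x_k$ for every $k\geq 0$, so that $SX=\sum_{k\geq 0}x_k\otimes e_k$ and the desired inequality drops out.

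I do not anticipate a serious obstacle: the substantive analytic work has already been packaged inside Lemma~\ref{intersection interpolation}, and the remaining tasks are routine. If anywhere, the mildly delicate point is the atomless reduction, where one has to lift the filtration from $\M$ to $\M\bar{\otimes}L_\infty(0,1)$ (naturally via $\M_n\bar{\otimes}L_\infty(0,1)$) and verify that $x_k\otimes 1$ remains a martingale difference sequence with the same $E$-norm; but this is standard.
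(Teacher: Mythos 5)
Your argument coincides with the paper's own proof: the same operator $x\mapsto\sum_{k\geq0}(\mathcal{E}_k(x)-\mathcal{E}_{k-1}(x))\otimes e_k$, the same $L_2$-isometry and $L_\infty$-bound by $2$, an application of Lemma \ref{intersection interpolation}, and the substitution $x=\sum_{k\geq0}x_k$. Your extra remarks (the atomless reduction so that $\M\bar{\otimes}\ell_\infty$ satisfies the hypothesis of Lemma \ref{intersection interpolation}, the self-adjointness of the map, and the harmless rescaling to get genuine contractions) are correct points of care that the paper leaves implicit.
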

\begin{proof} Let $\{\mathcal{M}_k\}_{k\geq0}\subset\mathcal{M}$ be an increasing sequence of unital von Neumann subalgebras and let $\mathcal{E}_k:\mathcal{M}\to\mathcal{M}_k$ be the conditional expectations. For brevity, we set $\mathcal{E}_{-1}=0.$ Consider the operator
$$T:x\to\sum_{k\geq0}\Big(\mathcal{E}_k(x)-\mathcal{E}_{k-1}(x)\Big)\otimes e_k,\quad x\in L_2(\M).$$
We have $T:L_2(\M)\to L_2(\M{\bar\otimes}\ell_{\infty})$ (in fact, $\|Tx\|_2=\|x\|_2$ for every $x\in L_2(\M)$). It is immediate that $\|Tx\|_{\infty}\leq 2\|x\|_{\infty}.$ It follows from Lemma \ref{intersection interpolation} that $T$ boundedly maps $E(\M)$ into $Z_E^2(\M{\bar\otimes}\ell_{\infty}).$ Applying this result to the element $x=\sum_{k\geq0}x_k,$  we have $$Tx=\sum_{k\geq0}\Big(\mathcal{E}_k(x)-\mathcal{E}_{k-1}(x)\Big)\otimes e_k=\sum_{k\geq0}x_k\otimes e_k$$ and the assertion immediately follows.
\end{proof}

Our first main result Theorem \ref{first main} now follows immediately from the combination of Proposition \ref{left estimate} and Proposition \ref{right estimate}.

\begin{corollary}\label{first cor} Let $E$ be a symmetric quasi-Banach space on $(0,1)$. Let $(x_k)_{k=0}^\infty\subset E(\M)$  be an arbitrary sequence and $(r_k)_{k=0}^\infty$ be the Rademacher sequence on $(0,1).$
\begin{enumerate}[{\rm (i)}]
\item\label{fmca} If $E\in Int(L_1(0,1),L_2(0,1)),$ then
$$\|\sum_{k\geq0}x_k\otimes r_k\|_{E(\M\bar{\otimes} L_{\infty}(0,1))}\lesssim_E\|\sum_{k\geq0}x_k\otimes e_k\|_{Z_E^2(\M{\bar\otimes} \ell_\infty)}.$$
\item\label{fmcb} If $E\in Int(L_2(0,1),L_{\infty}(0,1)),$ then
$$\|\sum_{k\geq0}x_k\otimes e_k\|_{Z_E^2(\M{\bar\otimes} \ell_\infty)}\lesssim_E\|\sum_{k\geq0}x_k\otimes r_k\|_{E(\M\bar{\otimes} L_{\infty}(0,1))}.$$
\end{enumerate}
\end{corollary}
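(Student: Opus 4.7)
The plan is to reduce Corollary \ref{first cor} to Theorem \ref{first main} by exhibiting the Rademacher randomization $\{x_k\otimes r_k\}_{k\geq0}$ as a noncommutative martingale difference sequence inside the enlarged algebra $\mathcal{N}:=\mathcal{M}\bar{\otimes} L_\infty(0,1)$.

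First I would equip $\mathcal{N}$ with the product trace $\tau\otimes\int_0^1$ and introduce the filtration $\mathcal{N}_k:=\mathcal{M}\bar{\otimes}W^*(r_0,\ldots,r_k)$; this is increasing, its union is weak${}^*$-dense in $\mathcal{N}$, and since $r_k$ is independent of $\mathcal{N}_{k-1}$ with mean zero, one has $x_k\otimes r_k\in\mathcal{N}_k$ and $\mathcal{E}_{k-1}(x_k\otimes r_k)=x_k\otimes\mathcal{E}_{k-1}(r_k)=0$. Thus $(x_k\otimes r_k)_{k\geq0}$ is a sequence of martingale differences in $\mathcal{N}$, and Theorem \ref{first main} applies to it.

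The second ingredient is the identity
\[
\Big\|\sum_{k\geq0}(x_k\otimes r_k)\otimes e_k\Big\|_{Z_E^2(\mathcal{N}\bar{\otimes}\ell_\infty)}=\Big\|\sum_{k\geq0}x_k\otimes e_k\Big\|_{Z_E^2(\mathcal{M}\bar{\otimes}\ell_\infty)}.
\]
Since $r_k^2=1$, one has $|x_k\otimes r_k|=|x_k|\otimes 1$, so $\mu(x_k\otimes r_k)=\mu(x_k)$ in the product trace. As the projections $\{1\otimes e_k\}_{k}$ are pairwise orthogonal and central, the singular value function of a $\sum_k a_k\otimes e_k$ in any such algebra depends only on the family $(\mu(a_k))_k$; applying this on both sides gives the equality of singular value functions on $(0,\infty)$, and the $Z_E^2$ quasi-norm depends only on $\mu$.

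With these two ingredients in place, part \eqref{fmca} follows by chaining: Theorem \ref{first main}\eqref{fma} (i.e.\ Proposition \ref{left estimate}) applied to $(x_k\otimes r_k)_{k\geq0}\subset E(\mathcal{N})$ bounds $\|\sum_k x_k\otimes r_k\|_{E(\mathcal{N})}$ by the $Z_E^2$-norm of $\sum_k(x_k\otimes r_k)\otimes e_k$, which we then rewrite using the identity above. Part \eqref{fmcb} is the symmetric statement: Theorem \ref{first main}\eqref{fmb} (Proposition \ref{right estimate}) gives the reverse estimate, and again we replace the $Z_E^2$-norm on $\mathcal{N}\bar{\otimes}\ell_\infty$ by the one on $\mathcal{M}\bar{\otimes}\ell_\infty$ using the singular-value identity. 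The only point that needs some care is the trace-normalization verification ensuring $\mu(a\otimes 1)=\mu(a)$ under $\tau\otimes\int_0^1$; beyond this bookkeeping the corollary is essentially immediate from Theorem \ref{first main}.
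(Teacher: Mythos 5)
Your proposal is correct and is exactly the argument the paper intends (it leaves the corollary as an immediate consequence of Theorem \ref{first main}): one views $(x_k\otimes r_k)_{k\geq0}$ as a martingale difference sequence in $\M\bar{\otimes}L_{\infty}(0,1)$ with respect to the filtration $\M\bar{\otimes}W^*(r_0,\dots,r_k)$, observes that $\mu(x_k\otimes r_k)=\mu(x_k)$ under the normalized product trace so that the $Z_E^2$-quasi-norms of the two disjointified sums coincide, and applies Theorem \ref{first main} in the enlarged algebra. No gaps; the density and conditional-expectation checks you include are the only points needing verification and they are handled correctly.
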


 We now prove Theorem \ref{js 2 sides}. Let $E$ be a symmetric quasi-Banach space. For $s>0$, define $\sigma_s:E\rightarrow E$ by setting (see \cite{KPS})
$$\sigma_sf(t)=f(t/s),\quad t>0,\quad f\in E.$$

\begin{proof}[Proof of Theorem \ref{js 2 sides}] Recall that the classical Haar system $(h_k)_{k\geq0}$ is defined as follows. First, set $h_0=1.$ If $k=2^m+l,$ $0\leq l<2^m,$ then
$$h_k=\chi_{(\frac{2l}{2^{m+1}},\frac{2l+1}{2^{m+1}})}-\chi_{(\frac{2l+1}{2^{m+1}},\frac{2l+2}{2^{m+1}})}.$$
Define a filtration\footnote{We took the filtration from the Proposition 6.1.3 in \cite{AK}.} $(\mathcal{M}_k)_{k\geq0}$ in $L_{\infty}(0,1)$ as follows. First, set $\mathcal{M}_0=\mathbb{C}.$ If $k=2^m+l,$ $0\leq l<2^m,$ then $$\mathcal{M}_k={\rm span}\big\{(\chi_{(\frac{j}{2^{m+1}},\frac{j+1}{2^{m+1}})})_{j=0}^{2l+1},\ (\chi_{(\frac{j}{2^m},\frac{j+1}{2^m})})_{j=l+1}^{2^m-1}\big\} $$
It is immediate that $(h_k)_{k\geq0}$ is a sequence of martingale differences in $L_{\infty}(0,1)$ with respect to the filtration $(\mathcal{M}_k)_{k\geq0}.$

Now we apply equality \eqref{js equality} to the sequences  $(\alpha_kh_k)_{k\geq0}$ and $(|\alpha_k|h_k)_{k\geq0}$ of martingale differences (here, $(\alpha_k)_{k\geq0}$ is a sequence of scalars). It is immediate that
$$\|\sum_{k\geq0}\alpha_kh_k\|_{E(0,1)}\approx_E\Big\|\sum_{k\geq0}\alpha_k h_k\otimes e_k\Big\|_{Z_E^2({\color{red}L_\infty(0,1)\bar{\otimes}\ell_\infty)}}\approx_E\|\sum_{k\geq0}|\alpha_k|h_k\|_{E(0,1)}.$$
Thus, Haar system is unconditional in $E.$ According to \cite[Theorem 2.c.6]{LT}, we have that $E\in Int(L_p,L_q)$ for some $1<p<q<\infty.$

Now, fix $x\in L_{\infty}(0,1)$ and consider the sequence $(x\otimes r_k)_{k=0}^{n-1}.$ Since the latter sequence consists of martingale differences, it follows from \eqref{js equality} that
$$\Big\|x\otimes\sum_{k=0}^{n-1}r_k\Big\|_E\approx_E\|\sigma_n\mu(x)\|_{Z_E^2}\approx\|(\sigma_n\mu(x))\chi_{(0,1)}\|_E+\|(\sigma_n\mu(x))\chi_{(1,\infty)}\|_2,$$
{\color{red}where the first $"\approx"$ follows from a simple fact that
$$\mu\Big(\bigoplus_{k=0}^{n-1}(x\otimes r_k)\Big)=\sigma_n\mu(x).$$}
Note that $$\|(\sigma_n\mu(x))\chi_{(0,1)}\|_E\leq\|x\|_\infty\quad {\rm and}\quad \|(\sigma_n\mu(x))\chi_{(1,\infty)}\|_2=n^{1/2}\Big(\int_{\frac1n}^\infty\mu^2(t,x)dt\Big)^{1/2}.$$
Dividing by $n^{1/2}$, setting $x_n=\frac1{n^{1/2}}\sum_{k=0}^{n-1}r_k$ and passing $n\to\infty,$ we obtain that
$$\limsup_{n\to\infty}\|x\otimes x_n\|_E\approx_E \|x\|_2.$$
Since for {\color{red}every} $0<t\leq 1,$
$$\|x_n\|_1=\frac1t\int_0^t\mu(\frac st,x_n)ds\leq \frac1t\int_0^t\mu(s,x_n)ds,$$
we get $\|x_n\|_1\prec\prec  x_n$, and hence by \eqref{majorization direct},
$$x\otimes\|x_n\|_1\prec \prec x\otimes x_n\quad {\rm and}\quad \|x\otimes\|x_n\|_1\|_E\lesssim_E\|x\otimes x_n\|_E.$$
Therefore, by Khinchine inequality for $L_1,$
\begin{equation}\label{E2 right}
\|x\|_E\approx\limsup_{n\to\infty}\|x\|_E\|x_n\|_1\leq\limsup_{n\to\infty}\|x\otimes x_n\|_E\approx_E\|x\|_2.
\end{equation}
On the other hand, the operators $x\to x\otimes x_n$ are uniformly bounded in $L_1$ and in $L_q$  (by Khinchine inequality). Hence, these operators are uniformly bounded in $E$ because that $E\in Int(L_1,L_q).$ Thus,
\begin{equation}\label{E2 left}
\|x\|_2\approx_E\limsup_{n\to\infty}\|x\otimes x_n\|_E\lesssim_q \|x\|_E.
\end{equation}
Combining \eqref{E2 right} and \eqref{E2 left}, we get
\begin{equation}\label{E2}
\|x\|_2\approx_E \|x\|_E,\quad x\in L_{\infty}(0,1).
\end{equation}

We now can omit the assumption $x\in L_{\infty}(0,1).$ Firstly for every $x\in E,$ it follows from \eqref{E2} that
$$\|x\|_E\geq \|\mu(x)\chi_{(\frac1n,1)}\|_E\approx_E\|\mu(x)\chi_{(\frac1n,1)}\|_2,\quad \forall n\geq1.$$
Passing $n\to\infty$, we get $\|x\|_2\lesssim\|x\|_E$ and, therefore, $E\subset L_2.$ If $x\in L_2$, then there exists $(x_k)\subset L_\infty(0,1)$ such that $\|x_k-x\|_2\to0.$ Therefore, again by \eqref{E2}
$$\|x_k-x_l\|_E\approx_E\|x_k-x_l\|_2\to0,\quad k,l\to\infty,$$
which implies that $x_k\to x$ in $E$. Thus, $x\in E$ and we conclude the proof.
\end{proof}

\section{Burkholder-Gundy inequality in symmetric Banach spaces}\label{bg section}

The main result in this section is Theorem \ref{second main}, which extends the Pisier-Xu noncommutative Burkholder-Gundy inequality \cite[Theorem 2.1]{PX} and also improves \cite[Theorem 3.1]{J-BG} and \cite[Proposition 4.18]{DPPS}. We first state several lemmas.

\begin{lemma}\label{left hinchine estimate} Let $E$ be a symmetric Banach space on $(0,1)$ and let $(x_k)_{k\geq0}\subset E(\M) $ be an arbitrary sequence. If $E\in Int(L_1(0,1),L_2(0,1)),$ then
$$\|\sum_{k\geq0}x_k\otimes r_k\|_{E(\M\bar{\otimes} L_{\infty}(0,1))}\lesssim_{ E}\|(\sum_{k\geq0}|x_k|^2)^{1/2}\|_{E(\M)}.$$
\end{lemma}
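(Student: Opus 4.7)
The plan is to establish this Khintchine-type upper bound by interpolating between the endpoints $L_1(0,1)$ and $L_2(0,1)$ via \cite[Theorem 3.2]{DDP-Int}, the same interpolation tool already invoked in the proofs of Lemma \ref{sum interpolation} and Lemma \ref{intersection interpolation}.

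I first embed the sequence into a matrix algebra. Form the column operator $X:=\sum_k x_k\otimes e_{k,1}$ in $\N:=\M\bar{\otimes} B(\ell_2)$. A direct computation gives $|X|^2=\bigl(\sum_k x_k^*x_k\bigr)\otimes e_{1,1}$, so $\mu(X)$ is supported in $(0,1)$ and coincides there with $\mu\bigl((\sum_k|x_k|^2)^{1/2}\bigr)$; consequently, for every symmetric quasi-Banach space $F$ on $(0,1)$,
$$\|X\|_{F(\N)}=\Bigl\|\Bigl(\sum_k|x_k|^2\Bigr)^{1/2}\Bigr\|_{F(\M)}.$$

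Next, introduce the linear map $U\colon\N\to\M\bar{\otimes} L_\infty(0,1)$ defined by $U(\sum_{i,j}a_{i,j}\otimes e_{i,j}):=\sum_k a_{k,1}\otimes r_k$. Observe that $U(A)=U(Ae_{1,1})$, and right-multiplication by the contraction $e_{1,1}$ does not increase singular values, so $\|Ae_{1,1}\|_{L_p(\N)}\leq\|A\|_{L_p(\N)}$ for all $p$. At the $L_2$-endpoint, orthonormality of $(r_k)$ yields $\|U(A)\|_{L_2}^2=\sum_k\|a_{k,1}\|_{L_2(\M)}^2=\|Ae_{1,1}\|_{L_2(\N)}^2$. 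At $L_1$, the classical column-direction noncommutative Khintchine inequality of Lust-Piquard and Pisier gives $\|U(A)\|_{L_1}\lesssim\|(\sum_k|a_{k,1}|^2)^{1/2}\|_{L_1(\M)}=\|Ae_{1,1}\|_{L_1(\N)}$. Since $E\in Int(L_1(0,1),L_2(0,1))$, applying \cite[Theorem 3.2]{DDP-Int} lifts $U$ to a bounded operator $E(\N)\to E(\M\bar{\otimes} L_\infty(0,1))$ with constant $\lesssim_E 1$. Substituting $A=X$ and combining with the identity from the previous step yields the claim.

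The principal difficulty lies in the $L_1$-endpoint bound, which is the deep one-sided noncommutative Khintchine inequality of Lust-Piquard-Pisier; the $L_2$-endpoint and the interpolation step are routine. A minor technical issue is the extension of $E$ from $(0,1)$ to the semifinite algebra $\N$: this can be bypassed either by truncating to $\M\otimes M_n$ for arbitrarily large $n$ (and passing to a limit), or by noting that the relevant column elements always have $\mu$ supported in $(0,1)$, so no genuine extension is required.
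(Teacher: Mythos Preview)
Your approach coincides with the paper's: same column embedding $X=\sum_k x_k\otimes e_{k1}$, same operator $U$ (the paper writes $S$, defined by $Sx=T(x\cdot(1\otimes e_{11}))$), same endpoint bounds via orthogonality at $L_2$ and the Lust-Piquard--Pisier Khintchine inequality \cite{LPP} at $L_1$. The one genuine gap is the interpolation step. You invoke \cite[Theorem~3.2]{DDP-Int} directly to obtain $U:E(\N)\to E(\M\bar{\otimes} L_\infty(0,1))$, but that result is formulated for trace-one source algebras, whereas $\N=\M\bar{\otimes} B(\ell_2)$ is semifinite with infinite trace. Neither of your bypasses closes this as written: truncation to $\M\otimes M_n$ with the standard trace yields an algebra of total trace $n$, so $E$ on $(0,1)$ does not sit between $L_1(\M\otimes M_n)$ and $L_2(\M\otimes M_n)$ with constants independent of $n$ (normalising the trace instead makes the $L_2$-bound of $U$ blow up like $n^{1/2}$); and the observation that $\mu(X)$ is supported on $(0,1)$ concerns only this particular input, not the operator on the whole couple, so it does not by itself license interpolation.

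The paper handles exactly this point by replacing the direct appeal to \cite[Theorem~3.2]{DDP-Int} with Lemma~\ref{sum interpolation}, which is precisely the extension of that interpolation step to a semifinite atomless source (one first passes, without loss of generality, to atomless $\M$). Lemma~\ref{sum interpolation} gives
\[
\|UX\|_{E(\M\bar{\otimes} L_\infty(0,1))}\lesssim_E\|X\|_{Z_E^2(\N)},
\]
and \emph{then} your support observation is used:
\[
\|X\|_{Z_E^2(\N)}=\Big\|\Big(\sum_k|x_k|^2\Big)^{1/2}\Big\|_{E(\M)}+\Big\|\Big(\sum_k|x_k|^2\Big)^{1/2}\Big\|_{(L_1+L_2)(\M)}\le 2\Big\|\Big(\sum_k|x_k|^2\Big)^{1/2}\Big\|_{E(\M)}.
\]
So the fix is simply to route your interpolation through Lemma~\ref{sum interpolation}; after that substitution your argument and the paper's are identical.
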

\begin{proof} {\color{red} Without loss of generality, we may assume that $\mathcal{M}$ is atomless. Indeed, otherwise, we consider algebra $\mathcal{M}\bar{\otimes} L_{\infty}(0,1)$ and a sequence $\{x_k\otimes 1\}_{k\geq0}.$}

Recalling that the column subspace of $L_p(\M\bar{\otimes}B(\ell_2))$ is a $1$-complemented subspace for every $0<p\leq\infty$, we consider the following linear mapping defined on $(L_1+L_2)(\M\bar{\otimes}B(\ell_2))$
$$T:\sum_{k\geq0}x_k\otimes e_{k1}\to\sum_{k\geq0}x_k\otimes r_k.$$
Since for every $x=\sum_{i,j\geq0}x_{ij}\otimes e_{ij}\in L_2(\M\bar{\otimes}B(\ell_2))$,
$$\|\sum_{i\geq0}x_{i1}\otimes r_i\|_{L_2(\M\bar{\otimes} L_{\infty}(0,1))}=\|\sum_{i\geq0}x_{i1}\otimes e_{i1}\|_2=\|\sum_{i,j\geq0}(x_{ij}\otimes e_{ij})(1\otimes e_{11})\|_2\leq\|x\|_2.$$
We now have a linear bounded operator $S:L_2(\M\bar{\otimes}B(\ell_2))\to L_2(\M\bar{\otimes} L_{\infty}(0,1))$ defined by the setting
$$Sx=T(x\cdot (1\otimes e_{11})),\quad x\in L_2(\M\bar{\otimes}B(\ell_2)).$$
It follows from noncommutative Khinchine inequality for $L_1(\M)$ (see \cite{LPP}) that for any $x=\sum_{i,j\geq0}x_{ij}\otimes e_{ij}\in L_1(\M\bar{\otimes}B(\ell_2)),$
$$\|Sx\|_{L_1(\M\bar{\otimes} L_{\infty}(0,1))}\leq \|(\sum_{k\geq0}|x_{i1}|^2)^{1/2}\|_{L_1(\M)}\leq\|\sum_{i\geq0}x_{i1}\otimes e_{i1}\|_1\leq\|x\|_1.$$
Applying Lemma \ref{sum interpolation}, we obtain
$$\|Sx\|_{E(\M\bar{\otimes} L_{\infty}(0,1)))}\lesssim_{ E}\|x\|_{Z_E^2(\M\bar{\otimes}B(\ell_2))}.$$
Setting $x=\sum_{k\geq0}x_k\otimes e_{k1},$ we get
$$\|\sum_{k\geq0}x_k\otimes r_k\|_{E(\M\bar{\otimes} L_{\infty}(0,1))}\lesssim_{ E}\|\sum_{k\geq0}x_k\otimes e_{k1}\|_{Z_E^2(\M\bar{\otimes}B(\ell_2))}.$$
However,
$$|\sum_{k\geq0}x_k\otimes e_{k1}|=\Big(\sum_{k\geq0}|x_k|^2\Big)^{1/2}\otimes e_{11}$$
and, therefore,
\begin{eqnarray*}\|\sum_{k\geq0}x_k\otimes r_k\|_{E(\M\bar{\otimes} L_{\infty}(0,1))}&\lesssim_{ E}&\Big\|\Big(\sum_{k\geq0}|x_k|^2\Big)^{1/2}\otimes e_{11}\Big\|_{Z_E^2(\M\bar{\otimes}B(\ell_2))}\\&=&\Big\|\Big(\sum_{k\geq0}|x_k|^2\Big)^{1/2}\Big\|_{E(\M)}
+\Big\|\Big(\sum_{k\geq0}|x_k|^2\Big)^{1/2}\Big\|_{(L_1+L_2)(\M)}
\\ &\leq&2\Big\|\Big(\sum_{k\geq0}|x_k|^2\Big)^{1/2}\Big\|_{E(\M)}.
\end{eqnarray*}
\end{proof}

\begin{lemma}\label{right hinchine estimate} Let $E$ be a symmetric Banach space  on $(0,1)$ and let $(x_k)_{k\geq0}\subset E(\M)$ be a sequence of martingale differences. If $E\in Int(L_2(0,1),L_q(0,1))$ for some $2<q<\infty,$ then
$$\Big\|\Big(\sum_{k\geq0}|x_k|^2\Big)^{1/2}\Big\|_{E(\M)}\lesssim_{E}\|\sum_{k\geq0}x_k\|_{E(\M)}.$$
\end{lemma}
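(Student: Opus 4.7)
The plan is to produce a single linear operator that realizes the column square function from the martingale, bound it on $L_2$ and on $L_q$, and then invoke interpolation. Concretely, put $\mathcal{N}=\M\bar{\otimes}B(\ell_2)$, set $\mathcal{E}_{-1}:=0$, and define
$$T(y)=\sum_{k\geq 0}\bigl(\mathcal{E}_k(y)-\mathcal{E}_{k-1}(y)\bigr)\otimes e_{k,1},\qquad y\in L_2(\M)+L_q(\M).$$
The key observation is that, since $e_{i,1}^{*}e_{j,1}=\delta_{ij}e_{1,1}$, we have $|T(y)|^2=\bigl(\sum_k|\mathcal{E}_k(y)-\mathcal{E}_{k-1}(y)|^2\bigr)\otimes e_{1,1}$. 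Consequently $\mu(T(y))$ coincides with $\mu\bigl((\sum_k|d_ky|^2)^{1/2}\bigr)$ on $\M$ (where $d_ky:=\mathcal{E}_k(y)-\mathcal{E}_{k-1}(y)$), so in particular $\mu(T(y))$ is supported on $(0,\tau(1\otimes e_{1,1}))=(0,1)$ and $\|T(y)\|_{E(\mathcal{N})}=\bigl\|(\sum_k|d_ky|^2)^{1/2}\bigr\|_{E(\M)}$.

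Next I would verify the two endpoint bounds. At $L_2$, the orthogonality of the martingale difference sequence $(d_ky)_k$ gives $\|T(y)\|_{L_2(\mathcal{N})}^2=\sum_k\|d_ky\|_2^2=\|y-\mathcal{E}_{-1}(y)\|_2^2\leq\|y\|_{L_2(\M)}^2$, so $T$ is a contraction $L_2(\M)\to L_2(\mathcal{N})$. At $L_q$, the Pisier--Xu noncommutative Burkholder--Gundy inequality \cite[Theorem 2.1]{PX} applied to the martingale differences of $y$ yields exactly
$$\|T(y)\|_{L_q(\mathcal{N})}=\Big\|\Big(\sum_k|d_ky|^2\Big)^{1/2}\Big\|_{L_q(\M)}\lesssim_q\|y\|_{L_q(\M)}.$$

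With both endpoint bounds in hand, the Dodds--Dodds--Pagter noncommutative interpolation theorem \cite[Theorem 3.2]{DDP-Int} applied to the Banach couples $(L_2(\M),L_q(\M))$ and $(L_2(\mathcal{N}),L_q(\mathcal{N}))$, together with the hypothesis $E\in Int(L_2(0,1),L_q(0,1))$, gives
$$\|T(y)\|_{E(\mathcal{N})}\lesssim_E\|y\|_{E(\M)},\qquad y\in E(\M).$$
Specializing to $y=\sum_{k\geq 0}x_k$ and using the defining property of martingale differences ($d_ky=x_k$) completes the proof upon identifying $\|T(y)\|_{E(\mathcal{N})}$ with the column square function as above.

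The main delicate point is the interpolation step: $\mathcal{N}$ is only semifinite while $E$ is defined on $(0,1)$, so one has to make sure the norms $\|\cdot\|_{E(\mathcal{N})}$ used at the endpoints are consistent. This is handled precisely by the remark that $\mu(T(y))$ is always supported in $(0,1)$, so $E(\mathcal{N})$ may be interpreted via $\|T(y)\|_{E(\mathcal{N})}=\|\mu(T(y))\|_E$ without ambiguity, and the interpolation machinery of \cite{DDP-Int} applies. A minor secondary issue is the convergence of the infinite sum defining $T(y)$, which is handled by first working with a truncated sum and passing to the limit using the Fatou property of $E(\M)$.
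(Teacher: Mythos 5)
Your construction is exactly the one the paper uses: the same column operator $T(y)=\sum_k(\mathcal{E}_k(y)-\mathcal{E}_{k-1}(y))\otimes e_{k1}$, the $L_2$ bound by orthogonality, the $L_q$ bound by Pisier--Xu, and the identification $|T(y)|=\bigl(\sum_k|d_ky|^2\bigr)^{1/2}\otimes e_{11}$ at the end. The gap is in the interpolation step. You cannot quote \cite[Theorem 3.2]{DDP-Int} for the couples $(L_2(\M),L_q(\M))\to(L_2(\mathcal{N}),L_q(\mathcal{N}))$ with $\mathcal{N}=\M\bar{\otimes}B(\ell_2)$: that theorem transfers the interpolation property of $E$ to noncommutative spaces built over algebras whose traces are compatible with the measure space on which $E$ lives, whereas here $E$ is a space on $(0,1)$, the hypothesis is $E\in Int(L_2(0,1),L_q(0,1))$, and the target algebra has infinite trace, so $E(\mathcal{N})$ is not a symmetric operator space to which the theorem speaks. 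Observing that each individual image $T(y)$ happens to have $\mu(T(y))$ supported in $(0,1)$ gives a meaning to the quantity $\|\mu(T(y))\|_E$, but it does not turn $T$ into an operator acting on the couple over $(0,1)$, and it does not make the cited theorem applicable; the implication ``bounded at both endpoints $\Rightarrow$ $\|\mu(T(y))\chi_{(0,1)}\|_E\lesssim_E\|y\|_{E(\M)}$'' for a map from a finite algebra into a semifinite one is precisely what requires proof.

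This is exactly what the paper's Lemma \ref{intersection interpolation} together with Remark \ref{q} supplies: one compresses by a projection $q\geq e_{(\mu(1,Ty),\infty)}(|Ty|)$ with $\nu(q)=1$, applies \cite[Theorem 3.2]{DDP-Int} only to the corner map $y\mapsto q\,Ty\,q$ between the two \emph{finite} trace-one algebras, and controls the off-corner pieces $q\,Ty\,(1-q)$ and $(1-q)Ty$ by the $L_2$ (resp.\ $L_q$) endpoint estimates; the outcome is $\|Ty\|_{Z_E^2(\mathcal{N})}\lesssim_E\|y\|_{E(\M)}$, whose first term dominates $\|\mu(Ty)\chi_{(0,1)}\|_E$, which is all you need since $\mu(Ty)$ lives on $(0,1)$. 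So to close your argument you should either route the interpolation step through Lemma \ref{intersection interpolation}/Remark \ref{q} (as the paper does), or replace the citation by a genuine argument, e.g.\ a $K$-functional one: $K(t,Ty;L_2,L_q)\lesssim K(t,y;L_2,L_q)$ at the endpoints, reduction of both $K$-functionals to $(0,1)$ because the supports lie there, and then the fact that $(L_2,L_q)$ is a Calder\'on couple (Sparr), so that interpolation spaces are $K$-monotone --- but that additional ingredient must be invoked explicitly; it is not contained in \cite[Theorem 3.2]{DDP-Int}. The remaining minor points (convergence of the series, Fatou-type limiting) are fine.
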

\begin{proof} Consider the linear mapping
$$T:x\to\sum_{k\geq0}(\mathcal{E}_k(x)-\mathcal{E}_{k-1}(x))\otimes e_{k1},\quad x\in L_q(\M).$$
Clearly, $T:L_2(\M)\to L_2(\M\bar{\otimes}B(\ell_2))$ (moreover, $\|Tx\|_{L_2(\M\bar{\otimes}B(\ell_2))}=\|x\|_{L_2(\M)}$). Using \cite[Theorem 2.1]{PX} (see also \cite[Theorem 5.1]{N}), we obtain
$$\|Tx\|_{L_q(\M\bar{\otimes}B(\ell_2))}=\Big\|\Big(\sum_{k\geq0}|\mathcal{E}_k(x)-\mathcal{E}_{k-1}(x)|^2\Big)^{1/2}\Big\|_{L_q(\M)}\lesssim_q\|x\|_{L_q(\M)}.$$
By Lemma \ref{intersection interpolation} and Remark \ref{q}, we have that
$\|Tx\|_{Z_E^2(\M\bar{\otimes}B(\ell_2))}\lesssim_E\|x\|_{E(\M)}.$
Again, $$\big|\sum_{k\geq0}(\mathcal{E}_k(x)-\mathcal{E}_{k-1}(x))\otimes e_{k1}\big|=\Big(\sum_{k\geq0}|\mathcal{E}_k(x)-\mathcal{E}_{k-1}(x)|^2\Big)^{1/2}\otimes e_{11},$$
therefore,
$$\Big\|\Big(\sum_{k\geq0}|\mathcal{E}_k(x)-\mathcal{E}_{k-1}(x)|^2\Big)^{1/2}\Big\|_{E(\M)}\lesssim_E\|x\|_{E(\M)}.$$
Substituting $x=\sum_{k\geq0}x_k,$ we conclude the proof.
\end{proof}

The following lemma is attributed in \cite{DPPS} to Bekjan \cite{Bekjan-rocky}. However, $p$-convexity and/or $q$-concavity assumed everywhere in \cite{Bekjan-rocky} makes result of that paper unsuitable for our purposes. We present a different proof based on the results from \cite{J-BG}.

\begin{lemma}\label{bekjan} Let $E$ be a symmetric Banach space on $(0,1)$ and let $(x_k)_{k\geq0}\subset E(\M)$ be a sequence of martingale differences. If $E\in Int(L_p,L_2)$ for $1<p<2,$ then
\begin{eqnarray*}
&&\inf_{x_k=y_k+z_k}\Big(\Big\|\Big(\sum_{k\geq0}|y_k|^2\Big)^{1/2}\Big\|_{E(\M)}+\Big\|\Big(\sum_{k\geq0}|z_k^*|^2\Big)^{1/2}\Big\|_{E(\M)}\Big)
\\&\approx_E&\inf_{\substack{x_k=y_k+z_k\\ \{y_k\}_{k\geq0},\{z_k\}_{k\geq0}\mbox{ are}\\ \mbox{martingale differences}}}\Big(\Big\|\Big(\sum_{k\geq0}|y_k|^2\Big)^{1/2}\Big\|_{E(\M)}+\Big\|\Big(\sum_{k\geq0}|z_k^*|^2\Big)^{1/2}\Big\|_{E(\M)}\Big).
\end{eqnarray*}
\end{lemma}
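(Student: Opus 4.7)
The ``$\gtrsim_E$'' direction is immediate since every martingale-difference decomposition is admissible in the first infimum. For the nontrivial direction, given an arbitrary decomposition $x_k = y_k + z_k$, I set
\[
\tilde y_k := \mathcal{E}_k(y_k) - \mathcal{E}_{k-1}(y_k), \qquad \tilde z_k := \mathcal{E}_k(z_k) - \mathcal{E}_{k-1}(z_k).
\]
Both $\tilde y_k$ and $\tilde z_k$ lie in $\mathcal{M}_k$ and are annihilated by $\mathcal{E}_{k-1}$, so they form martingale-difference sequences; since $x_k$ is itself a martingale difference, $\mathcal{E}_k(x_k) = x_k$ and $\mathcal{E}_{k-1}(x_k) = 0$, and hence $\tilde y_k + \tilde z_k = x_k$. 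Thus $(\tilde y_k, \tilde z_k)$ is a feasible decomposition for the right-hand infimum, and the lemma reduces to the column bound
\[
\Big\|\Big(\sum_k |\tilde y_k|^2\Big)^{1/2}\Big\|_{E(\mathcal{M})} \lesssim_E \Big\|\Big(\sum_k |y_k|^2\Big)^{1/2}\Big\|_{E(\mathcal{M})}
\]
together with its row analogue for $\tilde z_k^*$ versus $z_k^*$.

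Both estimates are instances of the boundedness, on the column $E$-valued Hardy space $E(\mathcal{M}; \ell_2^c)$, of the linear map $T:(a_k)\mapsto(\mathcal{E}_k(a_k) - \mathcal{E}_{k-1}(a_k))$. The noncommutative Stein inequality, in its symmetric-space form proved in \cite{J-BG}, asserts that $(a_k)\mapsto(\mathcal{E}_k(a_k))$ is bounded on $L_p(\mathcal{M};\ell_2^c)$ for every $1<p<\infty$, and it is trivially a contraction on $L_2(\mathcal{M};\ell_2^c)$ by orthogonality; a one-step shift of the index gives the same for $(a_k)\mapsto(\mathcal{E}_{k-1}(a_k))$, so $T$ is bounded on both $L_p(\mathcal{M};\ell_2^c)$ and $L_2(\mathcal{M};\ell_2^c)$. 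Embedding the column space isometrically and $1$-complementedly into $L_r(\mathcal{M}\bar{\otimes} B(\ell_2))$ via $(a_k)\mapsto\sum_k a_k\otimes e_{k,1}$ (for $r\in\{p,2\}$) and then rerunning the interpolation argument of Lemma \ref{sum interpolation} with the couple $(L_p,L_2)$ in place of $(L_1,L_2)$---which is legitimate because $E\in Int(L_p,L_2)$ by hypothesis---yields boundedness of $T$ on $E(\mathcal{M};\ell_2^c)$. The row statement follows from the column one applied to $(z_k^*)$, together with the identity $\tilde z_k^* = \mathcal{E}_k(z_k^*) - \mathcal{E}_{k-1}(z_k^*)$ (since conditional expectations are $*$-preserving).

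The main technical point is the interpolation step: one must verify that the boundedness of $T$ on $L_p(\mathcal{M};\ell_2^c)$ and $L_2(\mathcal{M};\ell_2^c)$ transfers to $E(\mathcal{M};\ell_2^c)$ without extra hypotheses on $E$ (such as separability, order-continuity, or the $p$- or $q$-convexity imposed in \cite{Bekjan-rocky}). This is precisely what the Lemma \ref{sum interpolation}-style technique---appealing to \cite[Theorem 3.2]{DDP-Int} applied to the algebra $\mathcal{M}\bar{\otimes} B(\ell_2)$---delivers, and it is exactly why the statement of the lemma can be given in the generality above.
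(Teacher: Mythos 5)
Your argument is correct and is essentially the paper's proof: the same re-decomposition $x_k=(\mathcal{E}_k(y_k)-\mathcal{E}_{k-1}(y_k))+(\mathcal{E}_k(z_k)-\mathcal{E}_{k-1}(z_k))$ into martingale differences, followed by the noncommutative Stein inequality in symmetric-space form from \cite{J-BG} (applied to the column and, via $*$-preservation of conditional expectations, to the row terms). Your additional sketch of re-proving the Stein bound by embedding the column space into $L_r(\M\bar{\otimes}B(\ell_2))$ and rerunning the Lemma \ref{sum interpolation} interpolation for the couple $(L_p,L_2)$ is not needed, since the cited symmetric-space Stein inequality already applies directly under the hypothesis $E\in Int(L_p,L_2)$.
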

\begin{proof} Let $x_k=y_k+z_k.$ Since $(x_k)_{k\geq0}$ is a sequence of martingale differences, it follows that
$$x_k=\mathcal{E}_k(x_k)-\mathcal{E}_{k-1}(x_k)=(\mathcal{E}_k(y_k)-\mathcal{E}_{k-1}(y_k))+(\mathcal{E}_k(z_k)-\mathcal{E}_{k-1}(z_k))\stackrel{def}{=}u_k+v_k.$$
Clearly, $(u_k)_{k\geq0}$ and $(v_k)_{k\geq0}$ are sequences of martingale differences. We have
$$\|\Big(\sum_{k\geq0}|u_k|^2\Big)^{1/2}\|_{E(\M)}\leq \|\Big(\sum_{k\geq0}|\mathcal{E}_k(y_k)|^2\Big)^{1/2}\|_{E(\M)}+\|\Big(\sum_{k\geq0}|\mathcal{E}_{k-1}(y_k)|^2\Big)^{1/2}\|_{E(\M)}.$$
It follows now from the noncommutative Stein inequality (see \cite[Lemma 3.3]{J-BG}) that
$$\|\Big(\sum_{k\geq0}|u_k|^2\Big)^{1/2}\|_{E(\M)}\lesssim_E \|\Big(\sum_{k\geq0}|y_k|^2\Big)^{1/2}\|_{E(\M)}.$$
Similarly, we have
$$\|\Big(\sum_{k\geq0}|v_k^*|^2\Big)^{1/2}\|_{E(\M)}\lesssim_E \|\Big(\sum_{k\geq0}|z_k^*|^2\Big)^{1/2}\|_{E(\M)}.$$
Hence, the infimum can be taken over all the sequences of martingale differences.
\end{proof}

We are now prepared to present the proof of Theorems \ref{second main} and \ref{second l2}.
\begin{proof}[Proof of Theorem \ref{second main}] Since $E\in Int(L_p,L_q)$ for $1<p<q<\infty,$ it follows from \cite[Lemma 4.17]{DPPS} that
\begin{equation}\label{arbitrary to tensor}
\|\sum_{k\geq0}x_k\|_{E(\M)}\approx_{E}\|\sum_{k\geq0}x_k\otimes r_k\|_{E(\M\bar{\otimes} L_{\infty}(0,1))}.
\end{equation}

\eqref{sma} {\color{red}\cite[Theorem 1.1]{lM-Suk} states that if $E$ is separable with finite upper Boyd index, then we have
$$\|\sum_{k\geq0}x_k\otimes r_k\|_{E(\M\bar{\otimes}L_{\infty}(0,1))}\gtrsim_{E}\inf_{x_k=y_k+z_k}\Big(\Big\|\Big(\sum_{k\geq0}|y_k|^2\Big)^{1/2}\Big\|_{E(\M)}+\Big\|\Big(\sum_{k\geq0}|z_k^*|^2\Big)^{1/2}\Big\|_{E(\M)}\Big).$$}
It immediately follows from \eqref{arbitrary to tensor} that
$$\|\sum_{k\geq0}x_k\|_{E(\M)}\gtrsim_{E}\inf_{x_k=y_k+z_k}\Big(\Big\|\Big(\sum_{k\geq0}|y_k|^2\Big)^{1/2}\Big\|_{E(\M)}+\Big\|\Big(\sum_{k\geq0}|z_k^*|^2\Big)^{1/2}\Big\|_{E(\M)}\Big),$$
where the infimum taken over all possible decompositions $x_k=y_k+z_k.$ By Lemma \ref{bekjan}, we have
$$\|\sum_{k\geq0}x_k\|_{E(\M)}\gtrsim_{E}\inf_{x_k=y_k+z_k}\Big(\Big\|\Big(\sum_{k\geq0}|y_k|^2\Big)^{1/2}\Big\|_{E(\M)}+\Big\|\Big(\sum_{k\geq0}|z_k^*|^2\Big)^{1/2}\Big\|_{E(\M)}\Big),$$
where the infimum is taken over the sequences of martingale differences.
We now prove the converse inequality. Let $x_k=y_k+z_k,$ with $(y_k)_{k\geq0}\subset E(\M)$ and $(z_k)_{k\geq0}\subset E(\M)$ being sequences of martingale differences. Applying \eqref{arbitrary to tensor} and Lemma \ref{left hinchine estimate}, we conclude that
$$\|\sum_{k\geq0}y_k\|_{E(\M)}\lesssim_{E}\Big\|\Big(\sum_{k\geq0}|y_k|^2\Big)^{1/2}\Big\|_{E(\M)}$$
and, similarly,
$$\|\sum_{k\geq0}z_k\|_{E(\M)}\lesssim_{E}\Big\|\Big(\sum_{k\geq0}|z_k^*|^2\Big)^{1/2}\Big\|_{E(\M)}.$$
Therefore, we have
$$\|\sum_{k\geq0}x_k\|_{E(\M)}\lesssim_{E}\inf_{x_k=y_k+z_k}\Big(\Big\|(\sum_{k\geq0}|y_k|^2\Big)^{1/2}\Big\|_{E(\M)}+\Big\|\Big(\sum_{k\geq0}|z_k^*|^2\Big)^{1/2}\Big\|_{E(\M)}\Big),$$
where the infimum is taken over the sequences of martingale differences.

\eqref{smb} {\color{red}\cite[Theorem 4.1]{DPPS} states that if $E$ is $p$-convex for some $0<p<\infty$ (we take $p=1$ since every Banach space is $1$-convex) and has finite upper Boyd index, then we have
$$\|\sum_{k\geq0}x_k\otimes r_k\|_{E(\M\bar{\otimes} L_{\infty}(0,1))}\lesssim_{E}\max\Big\{\Big\|\Big(\sum_{k\geq0}|x_k|^2\Big)^{1/2}\Big\|_{E(\M)},\Big\|\Big(\sum_{k\geq0}|x_k^*|^2\Big)^{1/2}\Big\|_{E(\M)}\Big\}.$$
Now it immediately follows from \eqref{arbitrary to tensor} that
$$\|\sum_{k\geq0}x_k\|_{E(\M)}\lesssim_{E}\max\Big\{\Big\|\Big(\sum_{k\geq0}|x_k|^2\Big)^{1/2}\Big\|_{E(\M)},\Big\|\Big(\sum_{k\geq0}|x_k^*|^2\Big)^{1/2}\Big\|_{E(\M)}\Big\}.$$}
The inequality
$$\|\sum_{k\geq0}x_k\|_{E(\M)}\gtrsim_{E}\max\Big\{\Big\|\Big(\sum_{k\geq0}|x_k|^2\Big)^{1/2}\Big\|_{E(\M)},\Big\|\Big(\sum_{k\geq0}|x_k^*|^2\Big)^{1/2}\Big\|_{E(\M)}\Big\}.$$
follows from Lemma \ref{right hinchine estimate}.
\end{proof}

\begin{proof}[Proof of Theorem \ref{second l2}] Let the Haar system $(h_k)_{k\geq0}$ and filtration $(\mathcal{M}_k)_{k\geq0}$ in $L_{\infty}(0,1)$ be defined as in the proof of Theorem \ref{js 2 sides}. Recall that $(h_k)_{k\geq0}$ is a sequence of martingale differences in $L_{\infty}(0,1)$ with respect to the filtration $(\mathcal{M}_k)_{k\geq0}.$

Now we apply the equality \eqref{bgundy leq2} (or \eqref{bgundy geq2}) to the sequences  $(\alpha_kh_k)_{k\geq0}$ and $(|\alpha_k|h_k)_{k\geq0}$ of martingale differences (here, $(\alpha_k)_{k\geq0}$ is a sequence of scalars). It is immediate that
$$\|\sum_{k\geq0}\alpha_kh_k\|_E\approx_E\|(\sum_{k\geq0}|\alpha_k|^2|h_k|^2)^{1/2}\|_E\approx_E\|\sum_{k\geq0}|\alpha_k|h_k\|_E.$$
Thus, Haar system is an unconditional basic sequence in $E.$ According to \cite[Theorem 2.c.6]{LT}, we have that $E\in Int(L_p,L_q)$ for $1<p<q<\infty.$
\end{proof}

We refer to \cite{J-D} for the maximal functions of noncommutative martingales and to \cite{dirksen2015} for the notation $E(\M;\ell_\infty).$ The assertion below follows from Theorem \ref{second main} above and \cite[Theorem 5.7]{dirksen2015}. Its dual version is also true for the case $E\in Int(L_p(0,1),L_2(0,1))$ for some $1<p<2.$ These results improve \cite[Theorem 6.1]{dirksen2015}.

\begin{corollary} Let $E$ be a symmetric Banach space on $(0,1)$ and let $(x_k)_{k\geq0}\subset E(\M)$ be a sequence of martingale differences. If $E\in Int(L_2(0,1),L_q(0,1))$ for some $2<q<\infty,$ then
$$\|(\sum_{l=0}^kx_l)_{k\geq0}\|_{E(\M;\ell_\infty)}\approx_{E}\max\Big\{\Big\|\Big(\sum_{k\geq0}|x_k|^2\Big)^{1/2}\Big\|_{E(\M)},\Big\|\Big(\sum_{k\geq0}|x_k^*|^2\Big)^{1/2}\Big\|_{E(\M)}\Big\}.$$
\end{corollary}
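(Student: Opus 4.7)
The plan is to combine the two-sided Burkholder--Gundy equivalence of Theorem~\ref{second main}\eqref{smb} with a Doob-type maximal estimate in $E(\mathcal M;\ell_\infty)$ supplied by \cite[Theorem~5.7]{dirksen2015}. Set $x:=\sum_{l\geq 0}x_l$, which makes sense in $E(\mathcal M)$ under our hypotheses (a priori as a series in $L_2(\mathcal M)$, then promoted to $E(\mathcal M)$ by Theorem~\ref{second main}\eqref{smb}). Since $(x_l)_{l\geq0}$ is a martingale difference sequence with $x_l\in\mathcal M_l$ and $\mathcal E_{l-1}(x_l)=0$, we have the basic identity
\begin{equation*}
\sum_{l=0}^{k}x_l=\mathcal E_k(x),\qquad k\geq0.
\end{equation*}

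For the upper bound, the Doob inequality in $E(\mathcal M;\ell_\infty)$ from \cite[Theorem~5.7]{dirksen2015}, which applies because $E\in Int(L_2(0,1),L_q(0,1))$ for some $2<q<\infty$ (so in particular $E$ has the interpolation parameters required there), gives
\begin{equation*}
\Big\|(\mathcal E_k(x))_{k\geq0}\Big\|_{E(\mathcal M;\ell_\infty)}\lesssim_E \|x\|_{E(\mathcal M)}.
\end{equation*}
Applying Theorem~\ref{second main}\eqref{smb} to the right-hand side yields
\begin{equation*}
\Big\|\Big(\sum_{l=0}^k x_l\Big)_{k\geq0}\Big\|_{E(\mathcal M;\ell_\infty)}\lesssim_E\max\Big\{\Big\|\Big(\sum_{k\geq0}|x_k|^2\Big)^{1/2}\Big\|_{E(\mathcal M)},\Big\|\Big(\sum_{k\geq0}|x_k^*|^2\Big)^{1/2}\Big\|_{E(\mathcal M)}\Big\}.
\end{equation*}

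For the matching lower bound, I would use the fact, standard for the Pisier-type norm $E(\mathcal M;\ell_\infty)$, that $\|y_k\|_{E(\mathcal M)}\leq\|(y_j)_{j\geq0}\|_{E(\mathcal M;\ell_\infty)}$ for each fixed $k$. Taking $y_k=\sum_{l=0}^k x_l=\mathcal E_k(x)$ and letting $k\to\infty$ (using that $\mathcal E_k(x)\to x$ in $E(\mathcal M)$, which follows from $E\in Int(L_2,L_q)$ and the convergence of conditional expectations), we obtain $\|x\|_{E(\mathcal M)}\lesssim\|(\mathcal E_k(x))_{k\geq0}\|_{E(\mathcal M;\ell_\infty)}$. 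Applying Theorem~\ref{second main}\eqref{smb} once more to express $\|x\|_{E(\mathcal M)}$ in terms of the column and row square functions produces the desired lower bound.

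The main obstacle is the verification that the exact hypotheses of \cite[Theorem~5.7]{dirksen2015} are met, i.e.\ that the interpolation assumption $E\in Int(L_2,L_q)$ for some $2<q<\infty$ is strong enough to invoke the maximal Doob inequality there (rather than the more restrictive $Int(L_p,L_q)$ for $2<p$ used for the square-function Burkholder--Gundy estimate in \cite{dirksen2015}); modulo this, the argument reduces to a clean two-step chain: Doob maximal inequality $\Rightarrow$ $\|x\|_{E(\mathcal M)}$ $\Rightarrow$ Theorem~\ref{second main}\eqref{smb}, with the lower bound following immediately by domination of the maximal norm.
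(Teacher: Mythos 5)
Your proposal is correct and follows essentially the same route as the paper, which proves this corollary in one line by combining Theorem \ref{second main}\eqref{smb} with the noncommutative Doob maximal inequality of \cite[Theorem 5.7]{dirksen2015} (whose hypotheses are indeed met, since $E\in Int(L_2,L_q)$ with $q<\infty$ gives the required Boyd-index/interpolation conditions). Your additional remarks on the lower bound (domination of each coordinate by the $E(\M;\ell_\infty)$-norm, plus passing to the limit or reducing to finitely many differences) just fill in details the paper leaves implicit.
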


\section{Burkholder inequality in symmetric Banach spaces}\label{burk section}

Applying Theorem \ref{first main} and Theorem \ref{second main} proved in the preceding sections, in this section we partly resolve one problem stated in \cite{NW}. We refer to \cite[Theorem 3.3.6]{LSZ} for the following lemma.
\begin{lemma}\label{LSZ-th} Let $\M$ be a semifinite von Neumann algebra and let $0\leq A, B\in (L_1+L_\infty)(\M)$. Then $B\prec\prec A$ if and only if
$$\tau\big((B-t)_+\big)\leq \tau\big((A-t)_+\big),\quad \forall t>0.$$
\end{lemma}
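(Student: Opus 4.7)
The plan is to reduce the lemma to a Legendre/Fenchel duality between the integral functionals $s\mapsto\int_0^s\mu(r,X)\,dr$ and $t\mapsto\tau((X-t)_+)$ associated to a positive operator $X$; both implications then follow from the order-reversing property of this duality. The starting point is the identity
$$\tau((X-t)_+)=\int_0^\infty(\mu(r,X)-t)_+\,dr,\qquad t>0,$$
valid for every $0\le X\in(L_1+L_\infty)(\M)$. It is an instance of the trace formula $\tau(\phi(X))=\int_0^\infty\phi(\mu(r,X))\,dr$ (see e.g. \cite{FK}) applied to the continuous non-decreasing convex function $\phi(u)=(u-t)_+$, combined with $\mu(r,(X-t)_+)=(\mu(r,X)-t)_+$.

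For the forward direction, assume $B\prec\prec A$, so that $\int_0^s\mu(r,B)\,dr\le\int_0^s\mu(r,A)\,dr$ for every $s>0$. Since the function $u\mapsto(u-t)_+$ is convex, non-decreasing on $[0,\infty)$ and vanishes at $0$, the classical Hardy-Littlewood-P\'olya theorem on weak majorization for decreasing nonnegative functions on $(0,\infty)$ yields
$$\int_0^\infty(\mu(r,B)-t)_+\,dr\le\int_0^\infty(\mu(r,A)-t)_+\,dr,$$
and the identity above rewrites this as the required trace inequality.

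For the converse, the key tool is the variational formula
$$\int_0^s\mu(r,X)\,dr=\inf_{t\ge 0}\Big(st+\tau((X-t)_+)\Big),\qquad s>0.$$
To establish it, set $c=c(t):=\tau(e_{(t,\infty)}(X))$, so that $\mu(r,X)\ge t$ on $[0,c)$ and $\mu(r,X)\le t$ on $(c,\infty)$; a direct computation then gives $st+\tau((X-t)_+)=\int_0^c\mu(r,X)\,dr+t(s-c)$, and elementary estimates in the two cases $s\le c$ and $s\ge c$ show that this expression always dominates $\int_0^s\mu(r,X)\,dr$, with equality whenever $t$ is chosen so that $c=s$ (take $t=\mu(s,X)$, using a one-sided limit when $\mu(\cdot,X)$ has a flat piece at that level). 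Once this formula is in hand, the hypothesis $\tau((B-t)_+)\le\tau((A-t)_+)$ gives $st+\tau((B-t)_+)\le st+\tau((A-t)_+)$ for every $t\ge 0$, and passing to the infimum yields $\int_0^s\mu(r,B)\,dr\le\int_0^s\mu(r,A)\,dr$, which is the definition of $B\prec\prec A$.

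The only delicate step is the verification of equality in the variational formula at points where $\mu(\cdot,X)$ has a jump or a plateau at the level $\mu(s,X)$; this is handled by a standard right-continuity argument, approaching $\mu(s,X)$ from the appropriate side. Everything else is immediate from the trace/functional-calculus formula and from the classical Hardy-Littlewood-P\'olya rearrangement inequality.
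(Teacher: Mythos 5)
The paper itself offers no argument for this lemma; it is simply quoted from the book of Lord--Sukochev--Zanin (Theorem 3.3.6 there), so there is no internal proof to compare against. Your proof is correct and self-contained, and it is the natural one: the identity $\tau((X-t)_+)=\int_0^\infty(\mu(r,X)-t)_+\,dr$ combined with the $K$-functional (Legendre-type) formula $\int_0^s\mu(r,X)\,dr=\inf_{t\ge 0}\bigl(st+\tau((X-t)_+)\bigr)$ exhibits submajorization and the family of trace inequalities as dual to each other, and both implications follow. Two small remarks. First, the ``delicate step'' you flag does not exist: taking $t=\mu(s,X)$ gives \emph{exact} equality in the variational formula, because by monotonicity $(\mu(r,X)-\mu(s,X))_+=\mu(r,X)-\mu(s,X)$ for $r\le s$ and $=0$ for $r\ge s$; plateaus and jumps cause no harm and no one-sided limit is needed. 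Second, your intermediate identity $st+\tau((X-t)_+)=\int_0^c\mu(r,X)\,dr+t(s-c)$ tacitly assumes $c=\tau(e_{(t,\infty)}(X))<\infty$, which can fail in the semifinite setting (then $\tau((X-t)_+)$ may be $+\infty$); the lower bound $st+\tau((X-t)_+)\ge\int_0^s\mu(r,X)\,dr$ is obtained more directly, and without any case distinction, from $st+\int_0^\infty(\mu(r,X)-t)_+\,dr\ge st+\int_0^s(\mu(r,X)-t)\,dr$. Note also that the forward implication can be run entirely inside the paper: the map $u\mapsto(|u|-t)_+$ is an Orlicz function in the sense of Section 2, so \eqref{majorization phi} applied to it yields $\tau((B-t)_+)\le\tau((A-t)_+)$ from $B\prec\prec A$, in place of the classical Hardy--Littlewood--P\'olya theorem you invoke (which is, of course, also a legitimate reference). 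With these cosmetic adjustments your argument is complete.
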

\begin{corollary}\label{disjoint submajorization} Let $\M$ be a semifinite von Neumann algebra and let $A_k,B_k\in (L_1+L_\infty)(\M),$ $k\geq0.$ If $B_k\prec\prec A_k$ for all $k\geq0,$ then
$$\sum_{k\geq0}B_k\otimes e_k\prec\prec\sum_{k\geq0}A_k\otimes e_k.$$
\end{corollary}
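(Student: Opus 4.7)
The plan is to reduce the claim to the $(A-t)_+$ criterion of Lemma \ref{LSZ-th} and then to exploit the fact that the minimal projections $1_{\M}\otimes e_k$ sit in pairwise orthogonal (and central) corners of $\M\bar\otimes\ell_\infty$, which will let the positive part of the truncated direct sum split coordinate by coordinate. In this way the inequality for the direct sums will reduce to summing the scalar inequalities at each index $k$.

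First I would reduce to the case $A_k,B_k\geq 0$. Since the relation $\prec\prec$ depends only on singular value functions, and $\mu(C)=\mu(|C|)$, one has $B_k\prec\prec A_k$ if and only if $|B_k|\prec\prec|A_k|$. Moreover, because the $e_k$ are pairwise orthogonal minimal projections, a direct computation using $e_ke_j=\delta_{kj}e_k$ yields
\[
\Big|\sum_{k\geq 0}C_k\otimes e_k\Big|=\Big(\sum_{k\geq 0}|C_k|^2\otimes e_k\Big)^{1/2}=\sum_{k\geq 0}|C_k|\otimes e_k,
\]
so the submajorization statement is unchanged upon replacing $A_k,B_k$ by their absolute values. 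Thus we may, and do, assume $A_k,B_k\geq 0$.

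Next I would carry out the main computation via orthogonality of the corners. By functional calculus in $\M\bar\otimes\ell_\infty$,
\[
\Big(\sum_{k\geq 0}B_k\otimes e_k-t\cdot 1\Big)_+=\sum_{k\geq 0}(B_k-t)_+\otimes e_k,\qquad t>0,
\]
and analogously for the $A_k$. Applying the tensor trace $\tau\otimes\mathrm{tr}_{\ell_\infty}$ (which on such diagonal sums reduces to $\sum_k\tau(\,\cdot\,)$), and invoking Lemma \ref{LSZ-th} pairwise to $(A_k,B_k)$, I obtain
\[
\sum_{k\geq 0}\tau\big((B_k-t)_+\big)\leq\sum_{k\geq 0}\tau\big((A_k-t)_+\big),\qquad t>0.
\]
A second application of Lemma \ref{LSZ-th}, this time in the converse direction and inside $\M\bar\otimes\ell_\infty$, then delivers the desired submajorization.

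The only mildly delicate point is ensuring that both $\sum_{k\geq 0}A_k\otimes e_k$ and $\sum_{k\geq 0}B_k\otimes e_k$ really lie in $L_1+L_\infty$ of $\M\bar\otimes\ell_\infty$, so that Lemma \ref{LSZ-th} actually applies to them; this is automatic whenever the majorant $\sum_kA_k\otimes e_k$ is in that space (the only case in which the conclusion is meaningful), and then the termwise inequalities force the dominated side to lie there as well. No genuine analytic obstacle arises.
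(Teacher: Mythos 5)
Your proposal is correct and follows essentially the same route as the paper: reduce to $A_k,B_k\geq0$, apply Lemma \ref{LSZ-th} termwise, use that $(\sum_k B_k\otimes e_k-t)_+=\sum_k(B_k-t)_+\otimes e_k$ so the trace splits as $\sum_k\tau((B_k-t)_+)$, and then apply Lemma \ref{LSZ-th} in the converse direction in $\M\bar\otimes\ell_\infty$. The extra details you supply (the reduction via $|\sum_k C_k\otimes e_k|=\sum_k|C_k|\otimes e_k$ and the integrability remark) are correct but are exactly the points the paper leaves implicit.
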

\begin{proof} Without loss of generality, $A_k,B_k\geq0.$ It follows from the assumption and Lemma \ref{LSZ-th} that
$$\tau((B_k-t)_+)\leq\tau((A_k-t)_+),\quad t>0,$$
for every $k\geq0.$ Thus,\footnote{Here, $\#$ denotes the counting measure on $\mathbb{Z}_+,$ so that $\tau\otimes\#$ is a trace on the von Neumann algebra $\mathcal{M}\bar{\otimes} l_{\infty}.$}
\begin{eqnarray*}(\tau\otimes\#)((\sum_{k\geq0}B_k\otimes e_k-t)_+)&=&\sum_{k\geq0}\tau((B_k-t)_+)\leq\sum_{k\geq0}\tau((A_k-t)_+)
\\&=&(\tau\otimes\#)((\sum_{k\geq0}A_k\otimes e_k-t)_+),\, t>0.
\end{eqnarray*}
Again applying Lemma \ref{LSZ-th}, we conclude the proof.
\end{proof}

The following lemma is taken from \cite[Theorem 1]{M2013}.
\begin{lemma} \label{p-concavification}
Let $E$ and $F$ be two symmetric quasi-Banach function spaces and $1<p<\infty.$ We have
$$(E+F)^{(p)}=E^{(p)}+F^{(p)},\quad {\rm and}\quad (E+F)_{(p)}=E_{(p)}+F_{(p)}. $$
\end{lemma}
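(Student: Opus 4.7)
The plan is to establish both quasi-norm equivalences by producing explicit pointwise decompositions of $f$ and invoking the elementary power inequalities $(x+y)^r\leq 2^{r-1}(x^r+y^r)$ for $r\geq 1$ and $(x+y)^r\leq x^r+y^r$ for $0<r\leq 1$ (both valid for $x,y\geq 0$). Throughout, I read $(E+F)^{(p)}=E^{(p)}+F^{(p)}$ and $(E+F)_{(p)}=E_{(p)}+F_{(p)}$ as equalities of sets with equivalent quasi-norms, since the quasi-norms in question are defined up to natural constants.

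For the convexification identity, the inclusion $E^{(p)}+F^{(p)}\subseteq (E+F)^{(p)}$ is routine: given any decomposition $f=g+h$, the pointwise bound $|f|^p\leq 2^{p-1}(|g|^p+|h|^p)$ with $|g|^p\in E$ and $|h|^p\in F$ gives $|f|^p\in E+F$ and a controlled $(E+F)$-norm. For the reverse inclusion, starting from a decomposition $|f|^p=a+b$ with $0\leq a\in E$ and $0\leq b\in F$, I would introduce the \emph{proportional split}
$$g := f\cdot\frac{a}{a+b},\qquad h := f\cdot\frac{b}{a+b},$$
with the convention $0/0=0$. Trivially $f=g+h$, and a one-line computation yields
$$|g|^p = a\cdot\Big(\tfrac{a}{a+b}\Big)^{p-1}\leq a,\qquad |h|^p\leq b,$$
so that $g\in E^{(p)}$ and $h\in F^{(p)}$ with norms controlled by $\|a\|_E^{1/p}$ and $\|b\|_F^{1/p}$. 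Taking infima over admissible pairs $(a,b)$ and applying the power comparison between $\|a\|_E^{1/p}+\|b\|_F^{1/p}$ and $(\|a\|_E+\|b\|_F)^{1/p}$ delivers the equivalence with a constant depending only on $p$.

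For the concavification identity, the easy direction uses subadditivity of $t\mapsto t^{1/p}$: if $f=g+h$ then $|f|^{1/p}\leq|g|^{1/p}+|h|^{1/p}\in E+F$, from which one reads off $\|f\|_{(E+F)_{(p)}}\lesssim_p\|f\|_{E_{(p)}+F_{(p)}}$ after invoking $(x+y)^p\leq 2^{p-1}(x^p+y^p)$. The reverse inclusion is subtler because the proportional split no longer works: after writing $|f|^{1/p}=a+b$ one would need multipliers $\phi+\psi=1$ with $\phi\cdot(a+b)^p\leq C\,a^p$, and the obvious candidate $\phi=a^p/(a+b)^p$ fails the constraint $\phi+\psi=1$. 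I would instead use a \emph{characteristic-function split}: let $A:=\{a\geq b\}$ and set $g:=f\chi_A$, $h:=f\chi_{A^c}$. Then pointwise $|g|^{1/p}=(a+b)\chi_A\leq 2a$ and $|h|^{1/p}\leq 2b$, so $g\in E_{(p)}$ and $h\in F_{(p)}$ with norms at most $2^p\|a\|_E^p$ and $2^p\|b\|_F^p$. Passing to infima closes the equivalence.

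The main obstacle is selecting the right pointwise decomposition in each of the hard inclusions. The proportional split is well-suited to convexification precisely because $a/(a+b)\in[0,1]$ raised to the nonnegative exponent $p-1$ stays in $[0,1]$, so the proportionality itself absorbs the excess without any wasted constants. For concavification the analogous elementary arithmetic would require raising to a negative exponent, which is what forces the cruder characteristic-function cut-off and the extra factor $2^p$ in the final constant; the technical heart of the argument is thus recognising that this coarser decomposition still suffices because the symmetric structure of $E+F$ absorbs the factor of $2$ pointwise.
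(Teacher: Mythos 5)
Your proof is correct, but note that the paper contains no proof of this lemma to compare with: it is quoted from Maligranda \cite{M2013} (Theorem 1 there), where it appears as the endpoint case of a stronger statement — a two-sided estimate for the $K$-functional of the couple $(E^{(p)},F^{(p)})$ in terms of that of $(E,F)$ — proved by pointwise decompositions of the same flavour as yours. Your argument is therefore a legitimate, self-contained elementary substitute: the proportional split $g=fa/(a+b)$, $h=fb/(a+b)$ applied to a positive decomposition $|f|^p=a+b$ handles the convexification, the cut along $\{a\ge b\}$ handles the concavification, and the resulting two-sided equivalences with constants depending only on $p$ are exactly what the paper uses (in the proof of Proposition \ref{lemma 2-4}, to identify $(L_1+L_2)^{(2)}$ with $L_2+L_4$ up to constants); what you give up relative to \cite{M2013} is only the quantitative $K$-functional information, which is not needed here. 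Two routine steps should be stated rather than assumed: (i) the reduction to decompositions with $0\le a\in E$, $0\le b\in F$ — for $u\ge0$ and an arbitrary decomposition $u=a+b$, set $a'=\max\{\min\{\Re a,u\},0\}$ and $b'=u-a'$, so that $0\le a'\le|a|$, $0\le b'\le|b|$ and the infimum over positive decompositions is unchanged up to constant $1$; and (ii) the ideal property of the sum space used in both easy inclusions — if $0\le u\le v_1+v_2$ with $0\le v_1\in E$, $0\le v_2\in F$, splitting $u$ proportionally to $v_1,v_2$ gives $\|u\|_{E+F}\le\|v_1\|_E+\|v_2\|_F$. With these two remarks made explicit, your proof is complete and yields the lemma in precisely the form (equality of sets with equivalent quasi-norms) in which the paper invokes it.
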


The following proposition contains crucial technical estimate needed for the proof of Theorem \ref{the 2-4}.
\begin{proposition}\label{lemma 2-4} Let $E\in Int (L_2,L_4)$ be a symmetric quasi-Banach space and let $(x_k)_{k\geq0}\subset E(\M)$ be a sequence of martingale differences. We have
$$\Big\|\Big(\sum_{k\geq0}|x_k|^2\Big)^{1/2}\Big\|_{E(\M)}\lesssim_E\Big\|\sum_{k\geq0}x_k\otimes e_k\Big\|_{Z_E^2(\M\bar{\otimes} \ell_\infty)}+\Big\|\Big(\sum_{k\geq0}\mathcal{E}_{k-1}(|x_k|^2)\Big)^{1/2}\Big\|_{E(\M)}.$$
\end{proposition}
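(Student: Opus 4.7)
Write $X := \sum_{k \geq 0} x_k \otimes e_k$ and denote the three quantities appearing in the inequality by $A$ (the left side), $B := \|X\|_{Z_E^2(\M\bar{\otimes}\ell_\infty)}$, and $C := \|(\sum_k \mathcal{E}_{k-1}(|x_k|^2))^{1/2}\|_{E(\M)}$. The identity $\|Y\|_{E_{(2)}(\M)} = \|Y^{1/2}\|_{E(\M)}^2$, valid for positive $Y$, reduces the claim $A \lesssim_E B + C$ to proving $A^2 \lesssim_E B^2 + C^2$ (after taking square roots, using $(u+v)^{1/2} \leq u^{1/2} + v^{1/2}$ for $u, v \geq 0$). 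The hypothesis $E \in Int(L_2, L_4)$ together with \eqref{r concavification} gives $E_{(2)} \in Int(L_1, L_2)$, the structural fact which allows us to invoke Proposition \ref{left estimate} in the space $E_{(2)}(\M)$.

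Set $d_k := |x_k|^2 - \mathcal{E}_{k-1}(|x_k|^2)$. Since $x_k \in \mathcal{M}_k$ forces $|x_k|^2 \in \mathcal{M}_k$, the sequence $(d_k)_{k \geq 0}$ consists of martingale differences in $E_{(2)}(\M)$. Applying the quasi-triangle inequality in $E_{(2)}(\M)$ to $\sum_k |x_k|^2 = \sum_k d_k + \sum_k \mathcal{E}_{k-1}(|x_k|^2)$, then Proposition \ref{left estimate} to $(d_k)$, followed by a further quasi-triangle inequality in $Z_{E_{(2)}}^2$, produces
$$A^2 \lesssim_E \|\textstyle\sum_k |x_k|^2 \otimes e_k\|_{Z_{E_{(2)}}^2(\M\bar{\otimes}\ell_\infty)} + \|\textstyle\sum_k \mathcal{E}_{k-1}(|x_k|^2) \otimes e_k\|_{Z_{E_{(2)}}^2(\M\bar{\otimes}\ell_\infty)} + C^2.$$
Because $\mathcal{E}_{k-1}$ is a contraction on $L_1(\M)$ and $L_\infty(\M)$, the Calder\'on--Mityagin theorem (Section 2.5) yields $\mathcal{E}_{k-1}(|x_k|^2) \prec\prec |x_k|^2$, and Corollary \ref{disjoint submajorization} lifts this to $\sum_k \mathcal{E}_{k-1}(|x_k|^2) \otimes e_k \prec\prec |X|^2 = \sum_k |x_k|^2 \otimes e_k$. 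Since $E_{(2)} \in Int(L_1, L_2) \subseteq Int(L_1, L_\infty)$ is fully symmetric, both ingredients of $\|\cdot\|_{Z_{E_{(2)}}^2}$ are $\prec\prec$-monotone, so the middle term above is absorbed into the first. The problem therefore reduces to the single key estimate $\||X|^2\|_{Z_{E_{(2)}}^2} \lesssim_E B^2$.

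This key estimate is what I expect to be the main obstacle; I would prove it by direct computation. Using $\mu(|X|^2) = \mu(X)^2$, the identity $\|\mu(X)^2\chi_{(0,1)}\|_{E_{(2)}} = \|\mu(X)\chi_{(0,1)}\|_E^2$, and the equivalence $\|f\|_{Z_F^2} \approx \|\mu(f)\chi_{(0,1)}\|_F + (\int_1^\infty \mu(t,f)^2\, dt)^{1/2}$ noted in Section~2.2, one has
$$\||X|^2\|_{Z_{E_{(2)}}^2} \approx \|\mu(X)\chi_{(0,1)}\|_E^2 + \Big(\int_1^\infty \mu(t, X)^4\, dt\Big)^{1/2}.$$
The first summand is $\leq B^2$. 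For the tail, the continuous embedding $E(0,1) \hookrightarrow L_2(0,1)$, a consequence of $E \in Int(L_2, L_4) \subseteq Int(L_2, L_\infty)$, gives $\mu(1, X) \leq (\int_0^1 \mu(t, X)^2\, dt)^{1/2} \lesssim_E \|\mu(X)\chi_{(0,1)}\|_E$, and the pointwise bound $\mu(t, X)^4 \leq \mu(1, X)^2\, \mu(t, X)^2$ on $[1, \infty)$ then produces
$$\Big(\int_1^\infty \mu(t, X)^4\, dt\Big)^{1/2} \leq \mu(1, X) \Big(\int_1^\infty \mu(t, X)^2\, dt\Big)^{1/2} \lesssim_E \|\mu(X)\chi_{(0,1)}\|_E \cdot \|X\|_{L_1+L_2},$$
which is $\leq B^2$ by the elementary $ab \leq \frac{1}{2}(a^2 + b^2)$. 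Assembling the pieces yields $A^2 \lesssim_E B^2 + C^2$ and hence $A \lesssim_E B + C$.
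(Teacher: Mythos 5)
Your argument is correct and follows essentially the same route as the paper's proof: pass to $E_{(2)}\in Int(L_1,L_2)$, split $\sum_k|x_k|^2$ into the conditioned part and the martingale differences $|x_k|^2-\mathcal{E}_{k-1}(|x_k|^2)$, apply Proposition \ref{left estimate} in $E_{(2)}$, absorb the disjointified conditioned term by submajorization and full symmetry, and finish with the key bound $\||X|^2\|_{Z^2_{E_{(2)}}}\lesssim_E\|X\|_{Z^2_E}^2$. The only (harmless) deviation is in that last estimate, where the paper identifies $(L_1+L_2)^{(2)}=L_2+L_4$ via Lemma \ref{p-concavification} while you verify the same bound by a direct head/tail computation with $\mu(1,X)$, using $E\hookrightarrow L_2(0,1)$ and \eqref{1+2}.
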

\begin{proof} If $E\in Int(L_2,L_4),$ then it follows from \eqref{r concavification} that $E_{(2)}\in Int(L_1,L_2).$ We have
\begin{eqnarray*}
\Big\|\Big(\sum_{k\geq0}|x_k|^2\Big)^{1/2}\Big\|_{E(\M)}&=&\Big\|\sum_{k\geq0}|x_k|^2\Big\|_{E_{(2)}(\M)}^{1/2}
\\&=&\Big\|\sum_{k\geq0}\mathcal{E}_{k-1}(|x_k|^2)+\sum_{k\geq0}|x_k|^2-\mathcal{E}_{k-1}(|x_k|^2)\Big\|_{E_{(2)}(\M)}^{1/2}.
\end{eqnarray*}
By the quasi-triangle inequality in $E_{(2)}(\M)$, we have that
$$\Big\|\Big(\sum_{k\geq0}|x_k|^2\Big)^{1/2}\Big\|_{E(\M)}\lesssim_E\Big\|\sum_{k\geq0}\mathcal{E}_{k-1}(|x_k|^2)\Big\|_{E_{(2)}(\M)}^{1/2}+\Big\|\sum_{k\geq0}|x_k|^2-\mathcal{E}_{k-1}(|x_k|^2)\Big\|_{E_{(2)}(\M)}^{1/2}.$$
Since $\{|x_k|^2-\mathcal{E}_{k-1}(|x_k|^2)\}_{k\geq0}$ is a sequence of martingale differences, it follows from Theorem \ref{first main} \eqref{fma} that
\begin{multline}\label{4a}
\Big\|\Big(\sum_{k\geq0}|x_k|^2\Big)^{1/2}\Big\|_{E(\M)}\lesssim_E\Big\|\Big(\sum_{k\geq0}\mathcal{E}_{k-1}(|x_k|^2)\Big)^{1/2}\Big\|_{E(\M)}
\\+\Big\|\sum_{k\geq0}\Big(|x_k|^2-\mathcal{E}_{k-1}(|x_k|^2)\Big)\otimes e_k\Big\|_{Z_{E_{(2)}}^2(\M\bar{\otimes} \ell_\infty)}^{1/2}.
\end{multline}
Since every conditional expectation operator is a contraction on $L_1(0,1)$ and $L_\infty(0,1)$,  it follows from \cite[Theorem II.3.4]{KPS} (see also \cite[Lemma 3.6.2]{LSZ}) that
$$\mathcal{E}_{k-1}(|x_k|^2)\prec\prec|x_k|^2,\quad \forall k\geq0,$$
and therefore, by Corollary \ref{disjoint submajorization} we obtain
$$\sum_{k\geq0}\mathcal{E}_{k-1}(|x_k|^2)\otimes e_k\prec\prec\sum_{k\geq0}|x_k|^2\otimes e_k.$$
Since $E_{(2)}\in Int(L_1,L_2)$, it follows that $E_{(2)}\in Int(L_1,L_\infty).$ That is, up to a constant,  $E_{(2)}$ (and, hence, $Z_{E_{(2)}}^2$) is a fully symmetric quasi-Banach space. Thus,
\begin{equation}\label{4b}\Big\|\sum_{k\geq0}\mathcal{E}_{k-1}(|x_k|^2)\otimes e_k\Big\|_{Z_{E_{(2)}}^2(\M\bar{\otimes} \ell_\infty)}\lesssim_E\Big\|\sum_{k\geq0}|x_k|^2\otimes e_k\Big\|_{Z_{E_{(2)}}^2(\M\bar{\otimes} \ell_\infty)}.\end{equation}
Using quasi-triangle inequality, and combining \eqref{4a} and \eqref{4b},  we deduce that
\begin{equation}\label{4c}
\Big\|\Big(\sum_{k\geq0}|x_k|^2\Big)^{1/2}\Big\|_{E(\M)}\lesssim_E\Big\|\Big(\sum_{k\geq0}\mathcal{E}_{k-1}(|x_k|^2)\Big)^{1/2}\Big\|_{E(\M)}+\Big\|\sum_{k\geq0}|x_k|^2\otimes e_k\Big\|_{Z_{E_{(2)}}^2(\M\bar{\otimes} \ell_\infty)}^{1/2}.
\end{equation}
Let $X:=\sum_{k\geq0}|x_k|\otimes e_k$. It now remains to verify that
\begin{equation}\label{4d}
\big\|X^2\big\|_{Z_{E_{(2)}}^2(\M\bar{\otimes} \ell_\infty)}^{1/2}\lesssim_E \|X\|_{Z_E^2(\M\bar{\otimes} \ell_\infty)}.
\end{equation}
Indeed, it follows from the definition of $\|\cdot\|_{Z_{E_{(2)}}^2},$ Lemma \ref{p-concavification} and \eqref{1+2} that
\begin{eqnarray*}\big\|X^2\big\|_{Z_{E_{(2)}}^2}^{1/2}&\approx&\big\|\mu(X^2)\chi_{(0,1)}\big\|_{E_{(2)}}^{1/2}+\|X^2\|_{L_1+L_2}^{1/2}=\|\mu(X)\chi_{(0,1)}\|_E+\|X\|_{(L_1+L_2)^{(2)}}\\&\approx& \|\mu(X)\chi_{(0,1)}\|_E+\|X\|_{L_2+L_4}\leq  \|\mu(X)\chi_{(0,1)}\|_E+\|X\|_{2}
\\&\leq&2 \|\mu(X)\chi_{(0,1)}\|_E+\|\mu(X)\chi_{[1,\infty)}\|_{2}
\\&\lesssim& \|\mu(X)\chi_{(0,1)}\|_E+\|\mu(X)\|_{L_1+L_2}= \|X\|_{Z_E^2}.\end{eqnarray*}
Combining \eqref{4c} and \eqref{4d}, we complete the proof.
\end{proof}

\begin{proposition}\label{pro 2-4} Let $E\in Int(L_2(0,1),L_4(0,1))$ be a symmetric quasi-Banach space and let $x=(x_k)_{k\geq0}$ be a sequence of martingale differences. We have
\begin{eqnarray*}\|\sum_{k\geq0}x_k\|_{E(\M)}&\lesssim_{E}&\|\sum_{k\geq0}x_k\otimes e_k\|_{Z^2_E(\M\bar{\otimes}\ell_\infty)}
\\&+&\Big\|\Big(\sum_{k\geq0}\mathcal{E}_{k-1}(|x_k|^2)\Big)^{1/2}\Big\|_{E(\M)}+\Big\|\Big(\sum_{k\geq0}\mathcal{E}_{k-1}(|x_k^*|^2)\Big)^{1/2}\Big\|_{E(\M)}.
\end{eqnarray*}
\end{proposition}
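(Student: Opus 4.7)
\textbf{Proof plan for Proposition \ref{pro 2-4}.}

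The plan is to combine the square-function form of the noncommutative Burkholder-Gundy inequality given by Theorem \ref{second main}\eqref{smb} with two applications of the conditional estimate of Proposition \ref{lemma 2-4}. Since $E\in Int(L_2(0,1),L_4(0,1))$ matches the hypothesis of Theorem \ref{second main}\eqref{smb} with $q=4$, that result yields the upper bound
$$\Big\|\sum_{k\geq0}x_k\Big\|_{E(\M)}\lesssim_E\Big\|\Big(\sum_{k\geq0}|x_k|^2\Big)^{1/2}\Big\|_{E(\M)}+\Big\|\Big(\sum_{k\geq0}|x_k^*|^2\Big)^{1/2}\Big\|_{E(\M)}.$$

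Next I would observe that $(x_k^*)_{k\geq0}$ is again a martingale difference sequence, because every conditional expectation $\mathcal{E}_{k-1}$ is $\ast$-preserving. Applying Proposition \ref{lemma 2-4} to $(x_k)_{k\geq0}$ controls the column square function by
$$\Big\|\sum_{k\geq0}x_k\otimes e_k\Big\|_{Z_E^2(\M\bar{\otimes}\ell_\infty)}+\Big\|\Big(\sum_{k\geq0}\mathcal{E}_{k-1}(|x_k|^2)\Big)^{1/2}\Big\|_{E(\M)},$$
while applying the same proposition to $(x_k^*)_{k\geq0}$, under the convention $|y|^2=y^*y$ so that $|x_k^*|^2=x_kx_k^*$, controls the row square function by
$$\Big\|\sum_{k\geq0}x_k^*\otimes e_k\Big\|_{Z_E^2(\M\bar{\otimes}\ell_\infty)}+\Big\|\Big(\sum_{k\geq0}\mathcal{E}_{k-1}(|x_k^*|^2)\Big)^{1/2}\Big\|_{E(\M)}.$$

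The final observation is that the two operators $\sum_{k\geq0}x_k\otimes e_k$ and $\sum_{k\geq0}x_k^*\otimes e_k$ are adjoints of one another and therefore share the same singular value function, so their $Z_E^2$-quasi-norms coincide. Adding the two displayed bounds via the quasi-triangle inequality in $E(\M)$ and substituting into the Burkholder-Gundy estimate yields precisely the stated inequality. I do not expect a serious obstacle here: the crucial analytic content has been absorbed into Proposition \ref{lemma 2-4} (where the quasi-Banach Johnson-Schechtman bound of Theorem \ref{first main}\eqref{fma} and the Hardy-Littlewood majorisation of $\mathcal{E}_{k-1}(|x_k|^2)$ by $|x_k|^2$ are the main ingredients) and into Theorem \ref{second main}\eqref{smb}; what remains for Proposition \ref{pro 2-4} is a clean concatenation of these two ingredients together with the symmetry between the row and column terms.
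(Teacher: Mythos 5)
Your proposal is correct and follows essentially the same route as the paper: apply Theorem \ref{second main}\eqref{smb} to reduce to the column and row square functions, then apply Proposition \ref{lemma 2-4} to $(x_k)_{k\geq0}$ and to the adjoint martingale difference sequence $(x_k^*)_{k\geq0}$, using that $\sum_{k\geq0}x_k\otimes e_k$ and $\sum_{k\geq0}x_k^*\otimes e_k$ have the same singular value function. The paper's proof is exactly this concatenation (with the adjoint step handled by the word ``similarly''), so no further commentary is needed.
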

\begin{proof} It follows from Theorem \ref{second main} \eqref{smb} that
$$\|\sum_kx_k\|_{E(\M)}\lesssim _{E(\M)}\Big\|\Big(\sum_{k\geq0}|x_k|^2\Big)^{1/2}\Big\|_{E(\M)}+\Big\|\Big(\sum_{k\geq0}|x_k^*|^2\Big)^{1/2}\Big\|_{E(\M)}.$$
By Proposition \ref{lemma 2-4}, we have
$$\Big\|\Big(\sum_{k\geq0}|x_k|^2\Big)^{1/2}\Big\|_{E(\M)}\lesssim_{E}\big\|\sum_{k\geq0}x_k\otimes e_k\big\|_{Z_E^2(\M\bar{\otimes} \ell_\infty)}+\Big\|\Big(\sum_{k\geq0}\mathcal{E}_{k-1}(|x_k|^2)\Big)^{1/2}\Big\|_{E(\M)}.$$
Similarly,
$$\Big\|\Big(\sum_{k\geq0}|x_k^*|^2\Big)^{1/2}\Big\|_E\lesssim_{E}\big\|\sum_{k\geq0}x_k\otimes e_k\big\|_{Z_E^2(\M\bar{\otimes} \ell_\infty)}+\Big\|\Big(\sum_{k\geq0}\mathcal{E}_{k-1}(|x_k^*|^2)\Big)^{1/2}\Big\|_{E(\M)}.$$
Combining the preceding estimates, we complete the proof.
\end{proof}

In order to prove the converse inequality in Proposition \ref{pro 2-4}, we use a crucially important result due to Junge \cite{J-D}. For convenience of the reader, we present a short proof in the appendix.

\begin{theorem}\label{junge decomposition} Let $\mathcal{M}$ be a semifinite von Neumann algebra and let $\mathcal{N}$ be a von Neumann subalgebra of $\mathcal{M}$ which admits a normal conditional expectation $\mathcal{E}:\mathcal{M}\to\mathcal{N}.$ There exists a linear isometry $u:L_2(\mathcal{M})\to L_2(\mathcal{N}\bar{\otimes}\mathcal{B}(\ell_2))$ such that
$$u(x)^*u(y)=\mathcal{E}(x^*y)\otimes e_{11},\quad \forall\, x,y\in L_2(\mathcal{M}).$$
\end{theorem}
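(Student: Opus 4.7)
The plan is to realize $u$ via the Jones basic construction. Form the von Neumann algebra $\M_1 := \langle \M, e_\N\rangle$ acting on $L_2(\M)$, where $e_\N$ is the Jones projection of $L_2(\M)$ onto $L_2(\N)$. Recall the defining relations $e_\N x e_\N = \mathcal{E}(x) e_\N$ for $x \in \M$ and $e_\N n = n e_\N$ for $n \in \N$. A Pimsner-Popa style argument, using that $\mathcal{E}$ is a normal faithful conditional expectation onto the semifinite algebra $\N$, produces a family $\{v_i\}_{i \in I} \subset \M$ satisfying $\mathcal{E}(v_i^* v_j) = \delta_{ij} p_i$ for projections $p_i \in \N$ and $\sum_i v_i e_\N v_i^* = 1$ in $\M_1$. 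The partial isometries $v_i e_\N v_j^*$ then furnish matrix units exhibiting the isomorphism $\M_1 \cong \N \bar{\otimes} \mathcal{B}(\ell_2)$, under which $e_\N$ corresponds to $1 \otimes e_{11}$.

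Next, define $u(x) := x e_\N$ on the dense subspace $\mathfrak{n}_\tau := \M \cap L_2(\M)$. For $x,y \in \mathfrak{n}_\tau$ a direct computation gives
\[
u(x)^* u(y) = e_\N x^* y e_\N = \mathcal{E}(x^* y) e_\N \;\longleftrightarrow\; \mathcal{E}(x^* y) \otimes e_{11},
\]
which is precisely the inner-product identity claimed. Letting $\hat{\tau}$ denote the natural semifinite trace on $\M_1 \cong \N \bar{\otimes} \mathcal{B}(\ell_2)$ normalised so that $\hat\tau(n \otimes e_{11}) = \tau(n)$ for $n \in \N_+$, one checks that
\[
\|u(x)\|_{L_2(\M_1)}^2 = \hat\tau\bigl(\mathcal{E}(x^* x) \otimes e_{11}\bigr) = \tau\bigl(\mathcal{E}(x^* x)\bigr) = \tau(x^* x) = \|x\|_{L_2(\M)}^2.
\]
Thus $u$ is an $L_2$-isometry on $\mathfrak{n}_\tau$; since $\mathfrak{n}_\tau$ is $\|\cdot\|_2$-dense in $L_2(\M)$, it extends uniquely to a linear isometry $u : L_2(\M) \to L_2(\N \bar{\otimes} \mathcal{B}(\ell_2))$. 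The inner-product identity then passes to $L_2(\M)$ by continuity of the bilinear map $(x,y) \mapsto u(x)^* u(y)$, regarded as valued in $L_1(\N \bar{\otimes} \mathcal{B}(\ell_2))$.

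The main obstacle is the identification $\M_1 \cong \N \bar{\otimes} \mathcal{B}(\ell_2)$, which hinges on the existence of the Pimsner-Popa basis $\{v_i\}$. In the finite-trace setting of the paper this is obtained by a standard Zorn-type maximality argument, but in the general semifinite setting one must first decompose the unit of $\M_1$ into $\hat\tau$-finite pieces and orthogonalise the would-be basis vectors within each piece, ensuring that the resulting maximal orthogonal system reassembles to a partition of unity. Once this structural result is in hand, the remainder of the proof is a direct computation in $L_2$ that could also be presented without explicit reference to the basic construction, via the $\N$-valued inner product $\langle x,y\rangle := \mathcal{E}(x^*y)$ and Paschke-style embedding of the associated Hilbert $\N$-module into the standard column module.
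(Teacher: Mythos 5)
Your strategy---realise $u$ as a column embedding coming from the basic construction $\M_1=\langle\M,e_{\N}\rangle$---is a viable alternative to the paper's argument, but as written it has one incorrect claim and one genuine gap at the central step. The incorrect claim: $\M_1\cong\N\bar{\otimes}\mathcal{B}(\ell_2)$ with $e_{\N}\leftrightarrow 1\otimes e_{11}$ is false in general (for the trivial inclusion $\N=\M$ one has $e_{\N}=1$ and $\M_1=\M$). What a family with $\mathcal{E}(v_i^*v_j)=\delta_{ij}p_i$ and $\sum_i v_ie_{\N}v_i^*=1$ actually yields is a normal $*$-isomorphism of $\M_1$ onto the corner $q(\N\bar{\otimes}\mathcal{B}(\ell_2))q$ with $q=\sum_i p_i\otimes e_{ii}$; if one arranges $v_0=1$ (hence $p_0=1$ and $\mathcal{E}(v_i)=0$ for $i\geq1$) then $e_{\N}\mapsto 1\otimes e_{11}$, and since $L_2$ of a corner embeds isometrically into $L_2(\N\bar{\otimes}\mathcal{B}(\ell_2))$ this weaker statement does suffice for your computation --- but the identification you state is not the one that holds.

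The genuine gap is the existence of the countable family $\{v_i\}\subset\M$ with $\mathcal{E}(v_i^*v_j)=\delta_{ij}p_i$ and $\sum_i v_ie_{\N}v_i^*=1$: this is precisely the crux of the theorem, and ``a Pimsner--Popa style argument'' / ``a standard Zorn-type maximality argument'' does not settle it in this generality. Concretely, after choosing $v_0,\dots,v_n$ the projection $1-\sum_{i\leq n}v_ie_{\N}v_i^*$ lies in $\M_1$, not in $\M$, so orthogonalising a further element of $\M$ against the chosen ones produces in general only a vector of $L_2(\M)$ (a possibly unbounded module vector), not an element of $\M$ whose $\mathcal{E}$-square is a projection; for infinite-index inclusions one must work with bounded vectors of the Hilbert $\N$-module, or with the C*-module completion of $\M$ under $\|\mathcal{E}(x^*x)\|_{\N}^{1/2}$, and countability of the family requires a separability hypothesis that should be invoked explicitly (the paper uses it too, via a separable C*-subalgebra dense in $L_2(\M)$). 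This is exactly the step the paper's proof bypasses: it forms the countably generated Hilbert C*-module $F$ over $\N$ with inner product $\mathcal{E}(x^*y)$ and applies Kasparov's stabilisation theorem, $F\oplus H_{\N}\cong H_{\N}$, which delivers the inner-product-preserving embedding into the standard column module without constructing any basis; your closing remark about a ``Paschke-style embedding into the standard column module'' is indeed the right repair, but that embedding is the actual content of the theorem, so deferring it leaves the proof incomplete. (Minor point, common to both arguments: the isometry uses that $\mathcal{E}$ is the trace-preserving expectation, $\tau\circ\mathcal{E}=\tau$, which is implicit in the paper's setting and should be stated.)
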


\begin{proposition}\label{pro 2-q} Let $E\in Int(L_2(0,1),L_q(0,1)),$ $2<q<\infty,$ be a symmetric quasi-Banach function space and let $(x_k)_{k\geq0}\subset E(\M)$ be an arbitrary sequence. We have
$$\Big\|\Big(\sum_{k\geq0}\mathcal{E}_{k-1}(|x_k|^2)\Big)^{1/2}\Big\|_{E(\M)}\lesssim_{E}\Big\|\Big(\sum_{k\geq0}|x_k|^2\Big)^{1/2}\Big\|_{E(\M)}.$$
\end{proposition}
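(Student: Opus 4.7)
The plan is to linearise the sublinear expression on the left-hand side via Junge's column embedding (Theorem \ref{junge decomposition}) and then run the familiar endpoint-plus-interpolation argument used throughout the paper. For each $k\geq 0$, applying Theorem \ref{junge decomposition} to $\mathcal{E}_{k-1}\colon\mathcal{M}\to\mathcal{M}_{k-1}$ yields an isometry $u_k\colon L_2(\mathcal{M})\to L_2(\mathcal{M}_{k-1}\bar{\otimes}\mathcal{B}(\ell_2))$ satisfying $u_k(x)^*u_k(x)=\mathcal{E}_{k-1}(|x|^2)\otimes e_{11}$. Viewing each $u_k$ as taking values in $L_2(\mathcal{M}\bar{\otimes}\mathcal{B}(\ell_2))$, I would define, on the column subspace, the linear operator
$$T\Big(\sum_k x_k\otimes e_{k1}\Big):=\sum_k u_k(x_k)\otimes e_{k1}\in L_2(\mathcal{M}\bar{\otimes}\mathcal{B}(\ell_2)\bar{\otimes}\mathcal{B}(\ell_2)).$$
A direct computation using $e_{1k}e_{l1}=\delta_{kl}e_{11}$ gives the key identity
$$\Big|T\Big(\sum_k x_k\otimes e_{k1}\Big)\Big|=\Big(\sum_k\mathcal{E}_{k-1}(|x_k|^2)\Big)^{1/2}\otimes e_{11}\otimes e_{11}.$$
In particular, for $X=\sum_k x_k\otimes e_{k1}$ the $E$-norm of $T(X)$ in the ambient semifinite algebra collapses to $\|(\sum_k\mathcal{E}_{k-1}(|x_k|^2))^{1/2}\|_{E(\mathcal{M})}$, while the $E$-norm of $X$ equals $\|(\sum_k|x_k|^2)^{1/2}\|_{E(\mathcal{M})}$.

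Next I would verify the endpoint bounds for $T$. Taking the trace of $|T(X)|^2$ and using $\tau\circ\mathcal{E}_{k-1}=\tau$ shows that $T$ is an $L_2$-isometry on the column subspace. At the $L_q$-endpoint, boundedness of $T$ reduces, after squaring and passing to $L_{q/2}$, to the ``dual Doob'' inequality
$$\Big\|\sum_k\mathcal{E}_{k-1}(|x_k|^2)\Big\|_{L_{q/2}(\mathcal{M})}\lesssim_q\Big\|\sum_k|x_k|^2\Big\|_{L_{q/2}(\mathcal{M})},$$
which is valid for $q/2>1$ as a standard consequence of Junge's noncommutative Doob maximal inequality \cite{J-D} applied to the positive sequence $(|x_k|^2)$.

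With both endpoint estimates secured, I would invoke the noncommutative Calder\'on--Mityagin interpolation principle (\cite[Theorem 3.2]{DDP-Int}; compare Remark \ref{q}) to upgrade the $L_2$- and $L_q$-bounds for $T$ to the corresponding $E$-bound $\|T(X)\|_E\lesssim_E\|X\|_E$, whereupon reading off the two norms via the identities above yields the desired inequality. The main technical obstacle I anticipate is the clean reduction from the column-subspace endpoint bounds to a genuine interpolation statement compatible with the DDP framework, since $T$ is naturally defined only on column vectors while \cite[Theorem 3.2]{DDP-Int} is stated for operators between full noncommutative $L_p$-spaces. The observation that every column output $T(X)$ is of the form $a\otimes e_{11}\otimes e_{11}$ with $a\in L_0(\mathcal{M})$, whose singular values are supported on $(0,\tau(1_{\mathcal{M}}))=(0,1)$, makes this reduction essentially routine---but it is the one piece of bookkeeping that needs genuine care.
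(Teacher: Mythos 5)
Your proposal is correct and follows essentially the same route as the paper: Junge's embedding applied to each $\mathcal{E}_{k-1}$, the column operator $\sum_k x_k\otimes e_{k1}\mapsto\sum_k u_{k-1}(x_k)\otimes e_{k1}$, an $L_2$-isometry plus the dual Doob inequality of \cite{J-D} at the $L_r$-endpoints, and then interpolation via \cite[Theorem 3.2]{DDP-Int}. The bookkeeping issue you flag is resolved in the paper exactly as you anticipate, by extending the map to the whole algebra through the compression $y\mapsto T\bigl(y\cdot(1\otimes e_{11})\bigr)$ and noting that all outputs sit in a corner of trace one.
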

\begin{proof} We set $E(0,\infty)=\{f\in L_1(0,\infty)+L_\infty(0,\infty): \mu(f)\in E(0,1)\}$ equipped with norm $\|f\|_{E(0,\infty)}:=\|\mu(f)\|_{E(0,1)}.$ Then $(E(0,\infty),\|\cdot\|_{E(0,\infty)})$ is a quasi-Banach space. By Theorem \ref{junge decomposition}, it is easy to see for $2\leq p\leq\infty,$
$$\|u(x)\|_{L_p(\M\bar{\otimes}\mathcal{B}(\ell_2))}=\|\mathcal E(|x|^2)\|^{1/2}_{L_{\frac p2}(\M)}\leq \|x\|_{L_p(\M)},\quad x\in L_2(\M)\cap L_p(\M).$$
By density, $u$ is bounded in $L_p(\M)$ for $2\leq p\leq\infty,$ and hence $u$ is well defined on $E(\M).$
Now it follows that for an arbitrary sequence $(y_k)_{k\geq0}\subset E(\M)$
\begin{eqnarray*}\Big\|\Big(\sum_{k\geq0}\mathcal{E}_{k-1}(|y_k|^2)\Big)^{1/2}\Big\|_{E(\mathcal{M})}&=&\Big\|\Big(\sum_{k\geq0}\mathcal{E}_{k-1}(|y_k|^2)\otimes e_{11}\Big)^{1/2}\Big\|_{E(\mathcal{M}\bar{\otimes}\mathcal{B}(\ell_2))}
\\&=&\Big\|\Big(\sum_{k\geq0}u_{k-1}(y_k)^*u_{k-1}(y_k)\Big)^{1/2}\Big\|_{E(\mathcal{M}\bar{\otimes}\mathcal{B}(\ell_2))}
\\&=&\Big\|\sum_{k\geq0}u_{k-1}(y_k)\otimes e_{k1}\Big\|_{E(\mathcal{M}\bar{\otimes}\mathcal{B}(\ell_2)\bar{\otimes}\mathcal{B}(\ell_2))}.
\end{eqnarray*}
Let $2\leq r<\infty.$ Consider the mapping
$S:L_r(\mathcal{M}\bar{\otimes}\mathcal{B}(\ell_2))\to L_r(\mathcal{M}\bar{\otimes}\mathcal{B}(\ell_2)\bar{\otimes}\mathcal{B}(\ell_2))$ by setting
$$Sy=T(y\cdot (1\otimes e_{11})),\quad y=\sum_{k,j}y_{kj}\otimes e_{kj}\in L_r(\mathcal{M}\bar{\otimes}\mathcal{B}(\ell_2)),$$
where $$T:\sum_{k\geq0}y_{k1}\otimes e_{k1}\to\sum_{k\geq0}u_k(y_{k1})\otimes e_{k1}.$$
We claim that $S$ is bounded from $L_r(\mathcal{M}\bar{\otimes}\mathcal{B}(\ell_2))$ into $L_r(\mathcal{M}\bar{\otimes}\mathcal{B}(\ell_2)\bar{\otimes}\mathcal{B}(\ell_2)).$
Indeed, by the dual Doob inequality \cite[Theorem 0.1]{J-D} we have
\begin{eqnarray*}
\|Sy\|_{L_r(\mathcal{M}\bar{\otimes}\mathcal{B}(\ell_2)\bar{\otimes}\mathcal{B}(\ell_2))}&=&\|\sum_{k\geq0}u_{k-1}(y_{k1})\otimes e_{k1}\|_{L_r(\mathcal{M}\bar{\otimes}\mathcal{B}(\ell_2)\bar{\otimes}\mathcal{B}(\ell_2))}
\\&=&\Big\|\Big(\sum_{k\geq0}\mathcal{E}_{k-1}(|y_{k1}|^2)\Big)^{1/2}\Big\|_{L_r(\M)}
\\&=&\big\|\sum_{k\geq0}\mathcal{E}_{k-1}(|y_{k1}|^2)\big\|^{1/2}_{L_{\frac r2}(\M)}
\\&\leq&\big\|\sum_{k\geq0}|y_{k1}|^2\big\|^{1/2}_{L_{\frac r2}(\M)}=\Big\|\Big(\sum_{k\geq0}|y_{k1}|^2\Big)^{1/2}\Big\|_{L_r(\M)}
\\&=&\|\sum_{k\geq0}y_{k1}\otimes e_{k1}\|_{L_r(\mathcal{M}\bar{\otimes}\mathcal{B}(\ell_2))}\leq\|y\|_{L_r(\mathcal{M}\bar{\otimes}\mathcal{B}(\ell_2))}.
\end{eqnarray*}
Hence, $S$ maps boundedly from $E(\mathcal{M}\bar{\otimes}\mathcal{B}(\ell_2))$ into $E(\mathcal{M}\bar{\otimes}\mathcal{B}(\ell_2)\bar{\otimes}\mathcal{B}(\ell_2)).$ Setting $y=\sum_{k\geq0}x_k\otimes e_{k1},$ we conclude that
$$\Big\|\sum_{k\geq0}u_{k-1}(x_k)\otimes e_{k1}\Big\|_{E(\mathcal{M}\bar{\otimes}\mathcal{B}(\ell_2)\bar{\otimes}\mathcal{B}(\ell_2))}\lesssim_E\Big\|\sum_{k\geq0}x_k\otimes e_{k1}\Big\|_{E(\mathcal{M}\bar{\otimes}\mathcal{B}(\ell_2))}=\Big\|\Big(\sum_{k\geq0}|x_k|^2\Big)^{1/2}\Big\|_{E(\M)},$$ which is our desired inequality and the proof is complete.
\end{proof}

\begin{proof}[Proof of Theorem \ref{the 2-4}] It follows from Proposition \ref{pro 2-q} that
\begin{multline*}\Big\|\Big(\sum_{k\geq0}|x_k|^2\Big)^{1/2}\Big\|_{E(\M)}+\Big\|\Big(\sum_{k\geq0}|x_k^*|^2\Big)^{1/2}\Big\|_{E(\M)}\\\geq
\Big\|\Big(\sum_{k\geq0}\mathcal{E}_{k-1}(|x_k|^2)\Big)^{1/2}\Big\|_{E(\M)}+\Big\|\Big(\sum_{k\geq0}\mathcal{E}_{k-1}(|x_k^*|^2)\Big)^{1/2}\Big\|_{E(\M)}.
\end{multline*}
Applying Theorem \ref{second main} to the above estimate, we obtain
\begin{equation}\label{burk 24 1}
\|\sum_kx_k\|_{\color{red}E(\M)}\gtrsim_E\Big\|\Big(\sum_{k\geq0}\mathcal{E}_{k-1}(|x_k|^2)\Big)^{1/2}\Big\|_{E(\M)}+\Big\|\Big(\sum_{k\geq0}\mathcal{E}_{k-1}(|x_k^*|^2)\Big)^{1/2}\Big\|_{E(\M)}.
\end{equation}
By Theorem \ref{first main}, we have
\begin{equation}\label{burk 24 2}
\|\sum_kx_k\|_E\gtrsim_E\big\|\sum_{k\geq0}x_k\otimes e_k\big\|_{Z_E^2(\M\bar{\otimes} \ell_\infty)}.
\end{equation}
The assertion follows now by combining Proposition \ref{pro 2-4}, \eqref{burk 24 1} and \eqref{burk 24 2}.
\end{proof}

As noted in the introduction, Theorem \ref{the 2-4} partly resolves one question stated in \cite{NW}. At the time of writing this paper, we do not know how to extend Theorem \ref{the 2-4} to the case that $E\in Int(L_2, L_q) $ for some $4<q<\infty.$ However, we have the following substitute, given in Theorem \ref{burk conditional} below. We start with a technical lemma.

\begin{lemma}\label{jsz-like lemma} Let $\Phi$ be a $2$-convex and $q$-concave Orlicz function such that
\begin{equation}\label{phi-c}\lim_{t\to0}\frac{\Phi(t)}{t^2}=0,\quad\lim_{t\to\infty}\frac{\Phi(t)}{t^q}=0,\quad 2<q<\infty.
\end{equation}
Suppose that $\mathcal{M}$ be a semifinite infinite von Neumann algebra and $x\in L_\Phi(\M).$
Then there exists a measurable function $x_{\Phi}$ on $(0,\infty)$ such that
$$\|x\|_{L_\Phi(\M)}\approx_q\|x\otimes x_{\Phi}\|_{(L_2+L_q)(\M\otimes L_\infty(0,\infty))}.$$
\end{lemma}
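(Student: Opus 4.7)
The strategy is to construct $x_\Phi$ explicitly as a decreasing function on $(0,\infty)$, reduce to a one-dimensional computation, and verify the equivalence by careful integral estimates using the convexity and concavity hypotheses. First I would note that both sides depend only on $\mu(x)$, so I may assume $\mathcal{M}=L_\infty(0,\infty)$ and $x=\mu(x)$ is a positive decreasing function on $(0,\infty)$. For $x_\Phi$, the natural candidate (after normalising $\Phi^{-1}(1)=1$) is the decreasing function $x_\Phi(s)=1/\Phi^{-1}(s)$ on $(0,\infty)$, possibly broken into two pieces matching the $L_2$ and $L_q$ endpoints; with this choice, the tensor-product rearrangement formula gives $\mu(\chi_{(0,a)}\otimes x_\Phi)(s)=x_\Phi(s/a)$ for every $a>0$.

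The central computation is to verify that, for $f=\chi_{(0,a)}$ and $b=1/a$,
\[
 b^{-1/2}\Big(\int_0^{b}\frac{du}{\Phi^{-1}(u)^2}\Big)^{1/2}+b^{-1/q}\Big(\int_b^\infty\frac{du}{\Phi^{-1}(u)^q}\Big)^{1/q}\approx_q \frac{1}{\Phi^{-1}(b)}=\|\chi_{(0,a)}\|_{L_\Phi}.
\]
Introduce $\alpha(t)=t\Phi'(t)/\Phi(t)\in[2,q]$ (the relation $2\Phi(t)/t\le\Phi'(t)\le q\Phi(t)/t$ encoding $2$-convexity and $q$-concavity), so that $\beta(v):=1/\alpha(\Phi^{-1}(v))\in[1/q,1/2]$ and $\Phi^{-1}(u)^{-2}=e^{-F(u)}$ for the non-decreasing $F(u)=2\int_1^u\beta(v)/v\,dv$. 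The integration-by-parts identity
\[
 \int_0^b e^{-F(u)}\,du \;=\; b\,e^{-F(b)}\;+\;\int_0^b 2\beta(u)\,e^{-F(u)}\,du
\]
(whose boundary term at $0$ vanishes precisely because $\lim_{t\to0}\Phi(t)/t^2=0$) then yields, using $\beta(u)\ge 1/q$, the lower estimate $\int_0^b e^{-F(u)}du\ge b e^{-F(b)}/(1-2/q)$. An analogous IBP at the other endpoint, exploiting $\lim_{t\to\infty}\Phi(t)/t^q=0$, gives the matching bound $\int_b^\infty du/\Phi^{-1}(u)^q\approx_q b/\Phi^{-1}(b)^q$; together these produce the claimed equivalence.

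To pass from indicator functions to general decreasing $f=\mu(x)$, I would write $f=\int_0^\infty\chi_{(0,d_f(\lambda))}\,d\lambda$, invoke the quasi-triangle inequality for $L_2+L_q$ and the concavity properties of $\Phi$ (so that the Orlicz norm aggregates the level-set contributions consistently with the computed estimate on indicators), and deduce the full equivalence for $x$ in place of $\chi_{(0,a)}$.

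\textbf{Main obstacle.} The delicate step is the upper bound. Using only $2$-convexity, the integral $\int_0^b du/\Phi^{-1}(u)^2$ is in general divergent (it reduces to $\int du/u$), so the strict limit $\Phi(t)/t^2\to 0$ as $t\to 0$ must be invoked quantitatively through the IBP identity above rather than through a pointwise comparison; the same care is needed at $\infty$ to handle $q$-concave $\Phi$ that barely differ from $t^q$. The correct engineering of $x_\Phi$ so that the boundary terms vanish in both regimes is the key technical input.
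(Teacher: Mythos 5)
Your plan breaks at exactly the point you flag as the ``main obstacle,'' and the proposed fix (the integration-by-parts identity) cannot close it. With your candidate $x_\Phi=1/\Phi^{-1}$, the required estimate for indicators is the two-sided bound $\int_0^b\Phi^{-1}(u)^{-2}\,du\approx_q b\,\Phi^{-1}(b)^{-2}$ (and its analogue at the $q$-endpoint). Your IBP identity, $\int_0^b e^{-F(u)}\,du=b\,e^{-F(b)}+\int_0^b 2\beta(u)e^{-F(u)}\,du$ with $\beta\in[1/q,1/2]$, only yields the \emph{lower} bound $\int_0^b e^{-F}\gtrsim_q b\,e^{-F(b)}$ (from $\beta\ge 1/q$); the choice $\beta\le 1/2$ gives nothing, and the needed \emph{upper} bound is simply false under the hypotheses \eqref{phi-c}. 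Indeed, substituting $u=\Phi(t)$ shows $\int_0^b\Phi^{-1}(u)^{-2}\,du<\infty$ iff $\int_0\Phi'(t)t^{-2}\,dt<\infty$, which is strictly stronger than $\lim_{t\to0}\Phi(t)/t^2=0$: for a $2$-convex $\Phi$ behaving like $t^2/\log(e/t)$ near zero (which satisfies \eqref{phi-c}) this integral diverges, so $\chi_{(0,a)}\otimes x_\Phi\notin (L_2+L_q)(0,\infty)$ even though $\chi_{(0,a)}\in L_\Phi$; the same phenomenon occurs at infinity for $\Phi\sim t^q/\log t$. So the central displayed equivalence fails for your $x_\Phi$, and only the direction $\|x\|_{L_\Phi}\lesssim\|x\otimes x_\Phi\|_{L_2+L_q}$ survives. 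A second, independent gap is the passage from indicators to general $x$: writing $f=\int_0^\infty\chi_{(0,d_f(\lambda))}\,d\lambda$ and using the triangle inequality in $L_2+L_q$ produces a Lorentz-type quantity $\int\|\chi_{(0,d_f(\lambda))}\|_{L_\Phi}\,d\lambda$, which is not equivalent to the Orlicz norm, and it gives no lower bound at all.

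The paper's argument avoids both issues by choosing $x_\Phi$ differently. Writing $\Phi(t)=t^2\phi(t^{q-2})$, the function $\phi$ is quasi-concave, hence equivalent to a concave $\phi_0$; the assumptions \eqref{phi-c} enter only to guarantee $\phi_0(0)=0$ and $\phi_0(t)=o(t)$, so that $\phi_0(t)=\int_0^\infty\min\{t,\tau\}\,d(-\phi_0'(\tau))$. This converts $\Phi$ into a mixture $\Phi(t)\approx_q\int_0^\infty N_q(ts)\,d\nu(s)$ of dilates of the single Orlicz function $N_q$ whose Orlicz space is $L_2+L_q$. Taking $x_\Phi$ to be a function with distribution $\nu$ (not $1/\Phi^{-1}$), the modular identity $\int_0^\infty\Phi_0(\mu(t,x)/\lambda)\,dt=(\tau\otimes\int)\bigl(N_q\bigl((x\otimes x_\Phi)/\lambda\bigr)\bigr)$ gives the norm equivalence directly for \emph{all} $x\in L_\Phi(\M)$ at once, with no endpoint integral estimates and no reduction to indicators. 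If you want to salvage your route, the construction of $x_\Phi$ must encode this mixture representation of $\Phi$ rather than the pointwise inverse; as written, the proposal does not prove the lemma.
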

\begin{proof} Let $\Phi$ satisfy the assumptions. By \cite[Lemma 6]{AS2014}, the function $\phi$ defined by the setting $\Phi(t)=t^2\phi(t^{q-2})$ is quasi-concave. By \cite[Theorem II.1.1]{KPS}, there exists a concave increasing function $\phi_0$ such that $\frac12\phi_0\leq\phi\leq\phi_0.$ It follows from \eqref{phi-c} that $\phi_0(0)=0$ and $\lim_{t\to\infty}\frac{\phi_0(t)}{t}=0.$ By \cite[Lemma 5.4.3]{BergLofstrom}, we have
$$\phi_0(t)=\int_0^{\infty}\min\{t,\tau\}d(-\phi_0'(\tau)).$$
Therefore,
\begin{eqnarray*}\Phi(t)&\approx& \int_0^{\infty}\min\{t^q,\tau t^2\}d(-\phi_0'(\tau))\stackrel{\tau=s^{2-q}}{=}\int_0^{\infty}\min\{t^q,s^{2-q}t^2\}d(\phi_0'(s^{2-q}))
\\&=&\int_0^{\infty}\min\{(ts)^q,(ts)^2\}d\nu(s)\approx_q\int_0^{\infty}N_q(ts)d\nu(s)\stackrel{def}{=}\Phi_0(t),
\end{eqnarray*}
where the positive measure $d\nu(s)=s^{-q}d(\phi_0'(s^{2-q}))$ and the Orlicz function $N_q$ is defined by the setting
$$N_q(t)=
\begin{cases}
2t^q,\quad 0\leq t\leq 1\\
qt^2-q+2,\quad t\geq1.
\end{cases}
$$
The equivalence $\Phi(t)\approx_q \Phi_0(t)$ for all $t>0$ guarantees that $L_\Phi(\M)=L_{\Phi_0}(\M).$ Now let $\|x\|_{L_{\Phi_0}(\M)}=\lambda.$ It follows from $\mu(x\otimes y)=\mu(\mu(x)\otimes \mu(y))$ that
\begin{eqnarray*}
1=\int_0^\infty\Phi_0\Big(\frac{\mu(t,x)}{\lambda}\Big)dt&=&\int_0^\infty\int_0^\infty N_q\Big(\frac{\mu(t,x)s}{\lambda}\Big)d\nu(s)dt\\&=&(\tau\otimes\int)\Big(N_q\Big(\frac {x\otimes x_\Phi}{\lambda}\Big)\Big),
\end{eqnarray*}
where $x_{\Phi}$ is a measurable function on $(0,\infty)$ whose distribution is $\nu.$ Now the well-known property of Orlicz norms (see e.g. \cite[I,1.2,Proposition 11]{MM}) implies that
$$\|x\|_{L_\Phi(\M)}\approx_q\|x\|_{L_{\Phi_0}(\M)}=\|x\otimes x_\Phi\|_{L_{N_q}(\M\otimes L_\infty(0,\infty))}.$$
One can easily show that $(L_2+L_q)(0,\infty)=L_{N_q}(0,\infty)$, and we conclude this proof.
\end{proof}

The following assertion is proved in \cite[Theorem 7.1]{KM} (see also \cite[Theorem 2.7]{ASW2}).

\begin{theorem}\label{km theorem} Let $E\in Int(L_p,L_q),$ $1\leq p<q\leq\infty,$ be a symmetric Banach function space. If $z$ and $w$ are such that $\|z\|_{L_{\Phi}}\leq\|w\|_{L_{\Phi}}$ for every $p$-convex and $q$-concave Orlicz function $\Phi$ and $w\in E,$ then $z\in E$ and $\|z\|_E\lesssim_E\|w\|_E.$
\end{theorem}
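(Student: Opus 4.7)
The plan is to promote the hypothesis into a $K$-functional majorization for the Banach couple $(L_p,L_q)$, and then to invoke $K$-monotonicity of this couple.

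First, I would extract integrability information. Since the power function $\Phi(s)=s^r$ is simultaneously $p$-convex and $q$-concave for every $p\leq r\leq q$, the hypothesis gives $\|z\|_{L_r}\leq\|w\|_{L_r}$ for all such $r$. In particular, taking $r=p$ and $r=q$ yields $z\in L_p+L_q$, so the Peetre $K$-functional
\[
K(t,f;L_p,L_q):=\inf\{\|g\|_{L_p}+t\|h\|_{L_q}:\, f=g+h\}
\]
is well-defined at $z$ (and at $w$).

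The central step is to upgrade the Orlicz-norm comparison to the $K$-functional comparison $K(t,z;L_p,L_q)\lesssim K(t,w;L_p,L_q)$ for every $t>0$. For this one constructs, for each scale $t>0$, a $p$-convex and $q$-concave Orlicz function $\Phi_t$ whose Orlicz norm is comparable, up to absolute constants, to $K(t,\cdot;L_p,L_q)$. The guiding template is a two-piece function behaving like $s^p$ for small $s$ and like $c_t s^q$ for large $s$, with break-point tied to $t$; however, naive piecewise-power glueings violate either $p$-convexity or $q$-concavity at the corner, so $\Phi_t$ must be produced by a smoothing procedure, for instance as a weighted average $\int_p^q s^r\,d\mu_t(r)$ of admissible power functions against a probability measure $\mu_t$ on $[p,q]$ concentrated near $p$ or near $q$ depending on $t$. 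Evaluating the hypothesis on such a family $\{\Phi_t\}_{t>0}$ then delivers the desired $K$-functional majorization.

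Finally, by the classical theorem of Sparr (also proved by Brudnyi--Krugljak), the couple $(L_p,L_q)$ is a Calder\'on (i.e.\ $K$-monotone) couple: every $E\in Int(L_p,L_q)$ enjoys the property that $K(t,f;L_p,L_q)\lesssim K(t,g;L_p,L_q)$ for all $t>0$ implies $\|f\|_E\lesssim_E\|g\|_E$. Applied to $f=z$ and $g=w$, this yields $\|z\|_E\lesssim_E\|w\|_E$ and closes the argument. The main obstacle is the construction of the test family $\{\Phi_t\}$ in the middle step: the joint requirement of $p$-convexity and $q$-concavity is stringent enough to rule out the most natural piecewise-power prototypes, so one must engineer the family carefully, either by mollifying at the break point or by averaging over the one-parameter family of admissible power functions $s^r$, $r\in[p,q]$, with weights chosen so that the resulting Luxemburg norm genuinely tracks $K(t,\cdot;L_p,L_q)$.
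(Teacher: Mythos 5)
First, on provenance: the paper does not actually prove this statement --- it is quoted directly from \cite[Theorem 7.1]{KM} (see also \cite[Theorem 2.7]{ASW2}). Your overall architecture --- upgrade the Orlicz-norm hypothesis to the $K$-functional domination $K(t,z;L_p,L_q)\lesssim K(t,w;L_p,L_q)$ and then invoke Sparr's theorem that $(L_p,L_q)$ is a Calder\'on couple --- is precisely the mechanism behind the cited result, so the strategy is the right one.

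There are, however, two concrete problems in the execution. (1) The opening step does not give $z\in L_p+L_q$: since $E\in Int(L_p,L_q)$ only guarantees $E\subset L_p+L_q$, both $\|w\|_{L_p}$ and $\|w\|_{L_q}$ may be infinite (take $E=L_p+L_q$), so testing against $\Phi(s)=s^p$ and $s^q$ is vacuous; you should test instead against an admissible Orlicz function generating $L_p+L_q$ itself. (2) More seriously, of the two constructions you offer for the crucial family $\{\Phi_t\}$, the averaging one provably cannot work. If $\mu_t$ is a probability measure on $[p,q]$ with $\mu_t([p+\varepsilon,q])=\delta>0$ for some $\varepsilon>0$, then $\int_p^q s^r\,d\mu_t(r)\geq \delta s^{p+\varepsilon}$ for $s\geq1$; but the required norm estimate $\|\cdot\|_{L_{\Phi_t}}\lesssim K(t,\cdot)$ forces $L_p\hookrightarrow L_{\Phi_t}$ uniformly, hence $\Phi_t(s)\lesssim s^p$ for large $s$. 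So an average of powers is admissible only when $\mu_t=\delta_p$, in which case $L_{\Phi_t}=L_p\not\supseteq L_q$. The viable route is the first one: take $\Phi_t$ equivalent to $N_t(s)=\min(s^p,c_t s^q)$ with the breakpoint chosen so that $\|\cdot\|_{L_{N_t}}\approx K(t,\cdot;L_p,L_q)$; the constants are independent of $t$ because on $(0,\infty)$ each $K(t,\cdot)$ is an exact dilate-and-rescale of $K(1,\cdot)=\|\cdot\|_{L_p+L_q}$, and $a\Phi(b\,\cdot)$ inherits $p$-convexity and $q$-concavity from $\Phi$. Since $N_t(s)/s^p$ is increasing and $N_t(s)/s^q$ is decreasing, $N_t$ is equivalent, with absolute constants, to a genuinely $p$-convex and $q$-concave Orlicz function --- this is exactly the smoothing device the paper uses in Lemma \ref{jsz-like lemma} (the function $N_q$ there) via \cite[Lemma 8]{JSZ}. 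With these two repairs your argument closes as described.
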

The following result shows that the remaining part concerning the Burkholder inequality for symmetric Banach spaces $E\in Int(L_2,L_q)$ for $4<q<\infty$ needs to be answered only for $E=L_2+L_q.$
\begin{theorem}\label{burk conditional} Let $\mathcal{M}$ be a semifinite infinite von Neumann algebra. Suppose that for every sequence $(x_k)_{k\geq0}\subset (L_2+L_q)(\mathcal{M})$ of martingale differences, $2<q<\infty,$ the inequality
\begin{eqnarray*}\|\sum_{k\geq0}x_k\|_{(L_2+L_q)(\mathcal{M})}&\lesssim_q&\Big\|\sum_{k\geq0}x_k\otimes e_k\Big\|_{(L_2+L_q)(\M\bar{\otimes}\ell_\infty)}\\&+&\Big\|\Big(\sum_{k\geq0}\mathcal{E}_{k-1}(|x_k|^2)\Big)^{1/2}\Big\|_{(L_2+L_q)(\mathcal{M})}
\\&+&\Big\|\Big(\sum_{k\geq0}\mathcal{E}_{k-1}(|x_k^*|^2)\Big)^{1/2}\Big\|_{(L_2+L_q)(\mathcal{M})}
\end{eqnarray*}
holds. If $E\in Int(L_2,L_q)$ is a symmetric Banach function space on the semi-axis, then for every sequence $(x_k)_{k\geq0}\subset E(\mathcal{M})$ of martingale differences the inequality
\begin{eqnarray*}\|\sum_{k\geq0}x_k\|_{E(\M)}&\lesssim_E&\Big\|\sum_{k\geq0}x_k\otimes e_k\Big\|_{E(\M\bar{\otimes}\ell_\infty)}+
\\&+&\Big\|\Big(\sum_{k\geq0}\mathcal{E}_{k-1}(|x_k|^2)\Big)^{1/2}\Big\|_{E(\M)}+\Big\|\Big(\sum_{k\geq0}\mathcal{E}_{k-1}(|x_k^*|^2)\Big)^{1/2}\Big\|_{E(\M)}
\end{eqnarray*}
holds.
\end{theorem}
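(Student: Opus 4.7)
The strategy is to use the interpolation criterion of Theorem~\ref{km theorem} to reduce the desired inequality in $E(\mathcal{M})$ for an arbitrary $E\in Int(L_2,L_q)$ to the same inequality in each Orlicz space $L_\Phi(\mathcal{M})$ with $\Phi$ being $2$-convex and $q$-concave, and then to transfer each such $L_\Phi$-inequality into the hypothesised $(L_2+L_q)$-inequality by tensoring with the scalar ``density'' $x_\Phi$ furnished by Lemma~\ref{jsz-like lemma}. To set things up, write $Z:=\sum_{k\geq 0}x_k$ and let $W$ denote the block-diagonal amalgamation in $\mathcal{M}\bar{\otimes}\ell_\infty^3$ (with the natural extended trace) of the three operators
$$X_1:=\sum_{k\geq 0}x_k\otimes e_k,\quad X_2:=\Big(\sum_{k\geq 0}\mathcal{E}_{k-1}(|x_k|^2)\Big)^{1/2},\quad X_3:=\Big(\sum_{k\geq 0}\mathcal{E}_{k-1}(|x_k^*|^2)\Big)^{1/2},$$
so that $\|W\|_F\approx \|X_1\|_F+\|X_2\|_F+\|X_3\|_F$ for every symmetric quasi-Banach space $F$. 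By Theorem~\ref{km theorem} it is then enough to show
$$\|Z\|_{L_\Phi(\mathcal{M})}\lesssim_q \|W\|_{L_\Phi}$$
for every $2$-convex and $q$-concave Orlicz function $\Phi$.

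Fix such a $\Phi$, and suppose first that it satisfies the limit conditions \eqref{phi-c} of Lemma~\ref{jsz-like lemma}, with associated scalar function $x_\Phi$ on $(0,\infty)$. Form the semifinite algebra $\tilde{\mathcal{M}}:=\mathcal{M}\bar{\otimes}L_\infty(0,\infty)$, equipped with the filtration $\tilde{\mathcal{M}}_k:=\mathcal{M}_k\bar{\otimes}L_\infty(0,\infty)$ and conditional expectations $\tilde{\mathcal{E}}_k:=\mathcal{E}_k\otimes \mathrm{id}$. Since $x_\Phi$ lives in a commutative tensor factor, it commutes with absolute values and with every $\tilde{\mathcal{E}}_{k-1}$, so
$$\tilde{\mathcal{E}}_{k-1}(|x_k\otimes x_\Phi|^2)=\mathcal{E}_{k-1}(|x_k|^2)\otimes|x_\Phi|^2,$$
whence $\bigl(\sum_k\tilde{\mathcal{E}}_{k-1}(|x_k\otimes x_\Phi|^2)\bigr)^{1/2}=X_2\otimes|x_\Phi|$ and analogously for the row term, while $\sum_k (x_k\otimes x_\Phi)\otimes e_k=X_1\otimes x_\Phi$. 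Applying the hypothesis to the martingale difference sequence $(x_k\otimes x_\Phi)_k\subset(L_2+L_q)(\tilde{\mathcal{M}})$ yields an $(L_2+L_q)$-bound on $Z\otimes x_\Phi$ controlled by these three tensor-inflated terms. A second application of Lemma~\ref{jsz-like lemma}, read in reverse, converts each of the four $(L_2+L_q)$-norms back into an $L_\Phi$-norm on $\mathcal{M}$ or on $\mathcal{M}\bar{\otimes}\ell_\infty$, with constants depending only on $q$, which is exactly the required estimate.

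The main delicate point is the class of $2$-convex and $q$-concave $\Phi$ for which the limit conditions \eqref{phi-c} fail, since Lemma~\ref{jsz-like lemma} does not directly apply there. For the extreme endpoints $\Phi(t)\approx t^2$ or $\Phi(t)\approx t^q$, one recovers the bound from the hypothesis together with the trivial embeddings $L_2,L_q\hookrightarrow L_2+L_q$. For the intermediate cases I would approximate $\Phi$ by Orlicz functions satisfying \eqref{phi-c} -- for instance, by regularising the positive measure $\nu$ in the representation $\Phi(t)\approx\int_0^\infty N_q(ts)\,d\nu(s)$ implicit in the proof of Lemma~\ref{jsz-like lemma} -- and then pass to the limit using the Fatou property of the Orlicz norm. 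Beyond this approximation step, which involves no martingale structure and where I expect the principal technical work to lie, the first two paragraphs above contain the entire conceptual content of the argument.
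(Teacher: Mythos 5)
Your argument is essentially the paper's proof: tensor the martingale difference sequence with $x_\Phi$ from Lemma \ref{jsz-like lemma}, apply the $(L_2+L_q)$-hypothesis to $(x_k\otimes x_\Phi)_{k\geq0}$ over $\mathcal{M}\bar{\otimes}L_\infty(0,\infty)$, use the tensor factorisation of the three square-function terms to convert all four norms back to $L_\Phi$-norms, and conclude with Theorem \ref{km theorem}. The only divergence is your third paragraph: the paper simply restricts to Orlicz functions satisfying \eqref{phi-c} and invokes Theorem \ref{km theorem} without comment, so the residual class of $2$-convex, $q$-concave $\Phi$ you worry about is a point of care the paper does not address rather than a defect in your proposal.
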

\begin{proof} Let $(x_k)_{k\geq0}\subset E(\mathcal{M})$ be an arbitrary sequence of martingale differences. Let $\Phi$ be as in Lemma \ref{jsz-like lemma}. Take $x_{\Phi}$ on $(0,\infty)$ from Lemma \ref{jsz-like lemma}. Consider a sequence $(x_k\otimes x_{\Phi})_{k\geq0}.$ This sequence consists of martingale differences with respect to the filtration $(\mathcal{M}_k\otimes L_{\infty}(0,\infty))_{k\geq0}.$ By Lemma \ref{jsz-like lemma} we have
\begin{equation}\label{from lemma 8}\|\sum_{k\geq0}x_k\|_{L_{\Phi}(\M)}\approx_q\|\sum_{k\geq0}x_k\otimes x_{\Phi}\|_{(L_2+L_q)(\M\bar{\otimes}L_\infty(0,\infty))}.
\end{equation}
Applying the assumption to the sequence $(x_k\otimes x_{\Phi})_{k\geq0}$ and combining \eqref{from lemma 8} we obtain
\begin{eqnarray*}\|\sum_{k\geq0}x_k\|_{L_{\Phi}(\M)}&\lesssim_q&\|\sum_{k\geq0}(x_k\otimes x_{\Phi})\otimes e_k\|_{(L_2+L_q)(\M\bar{\otimes}L_\infty(0,\infty)\bar{\otimes}\ell_\infty)}
\\&+&\Big\|\Big(\sum_{k\geq0}(\mathcal{E}_{k-1}\otimes 1)(|x_k|^2\otimes x_{\Phi}^2)\Big)^{1/2}\Big\|_{(L_2+L_q)(\M\bar{\otimes}L_\infty(0,\infty))}\\&+&\Big\|\Big(\sum_{k\geq0}(\mathcal{E}_{k-1}\otimes 1)(|x_k^*|^2\otimes x_{\Phi}^2)\Big)^{1/2}\Big\|_{(L_2+L_q)(\M\bar{\otimes}L_\infty(0,\infty))}
\\&=&\|(\sum_{k\geq0}x_k\otimes e_k)\otimes x_{\Phi}\|_{(L_2+L_q)(\M\bar{\otimes} \ell_\infty\bar{\otimes}L_\infty(0,\infty))} \\&+&\|\Big(\sum_{k\geq0}\mathcal{E}_{k-1}(|x_k|^2)\Big)^{1/2}\otimes x_{\Phi}\|_{(L_2+L_q)(\M\bar{\otimes}L_\infty(0,\infty))}\\&+&\|\Big(\sum_{k\geq0}\mathcal{E}_{k-1}(|x_k^*|^2)\Big)^{1/2}\otimes x_{\Phi}\|_{(L_2+L_q)(\M\bar{\otimes}L_\infty(0,\infty))}
\\&\approx_q&\|\sum_{k\geq0}x_k\otimes e_k\|_{L_{\Phi}(\M\bar{\otimes} \ell_\infty)}+\|\Big(\sum_{k\geq0}\mathcal{E}_{k-1}(|x_k|^2)\Big)^{1/2}\|_{L_{\Phi}(\M)}\\&+&\|\Big(\sum_{k\geq0}\mathcal{E}_{k-1}(|x_k^*|^2)\Big)^{1/2}\|_{L_{\Phi}(\M)}.
\end{eqnarray*}
The assertion follows now from Theorem \ref{km theorem}.
\end{proof}

\section{Disjointification inequalities: the $\Phi$-moment case}

In this section, we prove $\Phi$-moment version of {\color{red}disjointification} inequalities for noncommutative martingale difference sequences. The main result is Theorem \ref{third main}, which is new even in the classical case.

We consider the Orlicz function
\begin{equation}\label{M}
M(t)=
\begin{cases}
t^2,\quad 0\leq t\leq 1\\
2t-1,\quad t\geq1.
\end{cases}
\end{equation}

\begin{lemma}\label{m simple est} Let $(\M,\tau)$ and $(\N,\sigma)$ be finite von Neumann algebras. If $T:L_1(\N)\to L_1(\M)$ and $T:L_2(\N)\to L_2(\M)$ is a contraction, then
$$\tau(M(|Tx|))\leq 4\sigma(M(|x|)).$$
\end{lemma}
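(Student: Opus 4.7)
The plan is to decompose $x$ into an $L_1(\N)$ piece and an $L_2(\N)$ piece tailored to the shape of $M$, and then exploit the $2$-concavity of $M$.

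Write $x = v|x|$ for the polar decomposition and set $x_1 := v(|x|-1)_+$ and $x_2 := v\min(|x|, 1)$, so that $x = x_1 + x_2$, $|x_1| = (|x|-1)_+$, and $|x_2| = \min(|x|,1)$. Since $2(t-1)_+ + \min(t,1)^2 = M(t)$ for every $t\geq 0$ (a two-case check at $t=1$), one gets the exact identity
$$2\|x_1\|_{L_1(\N)} + \|x_2\|_{L_2(\N)}^2 = \int_0^\infty M(\mu(s,x))\,ds = \sigma(M(|x|)).$$
The contractivity assumptions on $T$ then give $\|Tx_1\|_{L_1(\M)} \leq \|x_1\|_{L_1(\N)}$ and $\|Tx_2\|_{L_2(\M)} \leq \|x_2\|_{L_2(\N)}$.

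To estimate $\tau(M(|Tx|))$, I would combine three facts. First, the operator inequality $|Tx|^2 = |Tx_1+Tx_2|^2 \leq 2(|Tx_1|^2+|Tx_2|^2)$, coming from $a^*b+b^*a \leq a^*a+b^*b$. Second, operator monotonicity of the square root together with monotonicity of $\mu(s,\cdot)$ in the operator order on positive $\tau$-measurable operators. Third, the elementary estimate $M(\sqrt 2\,t) \leq 2M(t)$ for $t\geq 0$ (a three-case check on the intervals $[0,1/\sqrt 2]$, $[1/\sqrt 2,1]$, and $[1,\infty)$ using $(t\sqrt 2-1)^2\geq 0$). Combining these,
$$\tau(M(|Tx|)) \leq 2\,\tau\!\bigl(\phi(|Tx_1|^2 + |Tx_2|^2)\bigr),\qquad \phi(t) := M(\sqrt t).$$
A direct check shows that $\phi$ is concave on $[0,\infty)$ with $\phi(0)=0$ (it equals $t$ on $[0,1]$ and $2\sqrt t-1$ on $[1,\infty)$, with matching one-sided derivatives at $t=1$); this is precisely the $2$-concavity of $M$.

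The crucial step is then Rotfeld's classical trace inequality: for any concave $f$ on $[0,\infty)$ with $f(0)=0$ and any positive $\tau$-measurable operators $A,B$, $\tau(f(A+B)) \leq \tau(f(A)) + \tau(f(B))$. Applying it to $\phi$ with $A=|Tx_1|^2$ and $B=|Tx_2|^2$ yields
$$\tau\!\bigl(\phi(|Tx_1|^2+|Tx_2|^2)\bigr) \leq \tau(M(|Tx_1|)) + \tau(M(|Tx_2|)).$$
Since $M(t) \leq 2t$ and $M(t)\leq t^2$ for all $t\geq 0$, we get $\tau(M(|Tx_1|)) \leq 2\|Tx_1\|_{L_1(\M)} \leq 2\|x_1\|_{L_1(\N)}$ and $\tau(M(|Tx_2|))\leq \|Tx_2\|_{L_2(\M)}^2 \leq \|x_2\|_{L_2(\N)}^2$. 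Assembling,
$$\tau(M(|Tx|)) \leq 2\bigl(2\|x_1\|_{L_1(\N)} + \|x_2\|_{L_2(\N)}^2\bigr) = 2\,\sigma(M(|x|)) \leq 4\,\sigma(M(|x|)).$$
The only real obstacle is ensuring that Rotfeld's inequality is available in this $\tau$-measurable setting; since $\M$ and $\N$ are finite all operators involved are $\tau$-measurable and the inequality is standard (cf.\ \cite{FK}). The stated constant $4$ is not tight — the above approach actually delivers $2$, which more than suffices.
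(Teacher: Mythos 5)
Your argument is correct, and it is genuinely different from the one in the paper. The paper splits $x$ by spectral projections, $x_1=xe_{[0,1]}(|x|)$ and $x_2=xe_{(1,\infty)}(|x|)$, invokes the submajorization $\mu(Tx_1+Tx_2)\prec\prec\mu(Tx_1)+\mu(Tx_2)$ of \cite[Theorem 3.3.3]{LSZ} together with the \emph{convexity} of $M$ (so that $\mathbb{E}(M(\cdot))$ is monotone under $\prec\prec$) and the doubling bound $M(2t)\leq 4M(t)$, and then finishes with $M(t)\leq t^2$, $M(t)\leq 2t$ and the two contractivity hypotheses; this yields the constant $4$. You instead use the ``continuous'' splitting $|x_1|=(|x|-1)_+$, $|x_2|=\min(|x|,1)$, which has the pleasant feature that $2\|x_1\|_1+\|x_2\|_2^2$ equals $\sigma(M(|x|))$ exactly, and you control $\tau(M(|Tx|))$ via the operator inequality $|Tx|^2\leq 2(|Tx_1|^2+|Tx_2|^2)$, the doubling bound $M(\sqrt2\,t)\leq 2M(t)$, and the \emph{concavity} of $\phi(t)=M(t^{1/2})$ through the Rotfel'd-type trace inequality $\tau(\phi(A+B))\leq\tau(\phi(A))+\tau(\phi(B))$ for positive $A,B$. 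All the elementary verifications you sketch (the identity $2(t-1)_+ +\min(t,1)^2=M(t)$, the three-case check of $M(\sqrt2\,t)\leq 2M(t)$, the matching of one-sided derivatives of $\phi$ at $1$) are correct, and the route delivers the sharper constant $2$. The only point that deserves an explicit citation rather than a remark that it is ``standard'' is the Rotfel'd inequality in the $\tau$-measurable setting: note that subadditivity of $\phi$ on scalars plus $\mu(A+B)\prec\prec\mu(A)+\mu(B)$ does \emph{not} suffice here, since submajorization is only compatible with convex integrands, so you genuinely need the operator version of Rotfel'd (available, e.g., via the submajorization $f(A+B)\prec\prec f(A)+f(B)$ for concave increasing $f$ with $f(0)=0$). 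With that reference supplied, your proof is complete; the paper's route has the advantage of relying only on the submajorization machinery it already uses elsewhere, while yours buys a better constant and an exact bookkeeping identity for $\sigma(M(|x|))$.
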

\begin{proof} Split $x=x_1+x_2,$ where $x_1=xe_{[0,1]}(|x|)$ and $x_2=xe_{(1,\infty)}(|x|).$ By \cite[Theorem 3.3.3]{LSZ}, we have
$${\color{red}\mu(Tx)=\mu(Tx_1+Tx_2)}\prec\prec\mu(Tx_1)+\mu(Tx_2)=\frac12\Big(\mu(2Tx_1)+\mu(2Tx_2)\Big).$$
and, therefore,
$$\tau(M(|Tx|))\leq{\color{red}\mathbb E}\Big(M\Big(\frac12\big(\mu(2Tx_1)+\mu(2Tx_2)\big)\Big)\Big)\leq\frac12\Big(\tau(M(2|Tx_1|))+
\tau(M(2|Tx_2|))\Big).$$
It is easy to see $M(2t)\leq 4M(t),$ $M(t)\leq 2t$ and $M(t)\leq t^2$ for every $t>0,$ and hence we get
$$\tau(M(|Tx|))\leq 2\tau(M(|Tx_1|))+2\tau(M(|Tx_2|))\leq 2\tau(|Tx_1|^2)+4\tau(|Tx_2|).$$
Since $T:L_1(\N)\to L_1(\M)$ and $T:L_2(\N)\to L_2(\M)$ is a contraction, it follows that
$$\tau(M(|Tx|))\leq 2\sigma(|x_1|^2)+4\sigma(|x_2|)\leq 4{\color{red}\sigma}(M(|x_1|))+4{\color{red}\sigma}(M(|x_2|))=4\sigma(M(|x|)).$$
\end{proof}

\begin{lemma}\label{modular left m} Let $(\mathcal{M},\tau)$ be a finite von Neumann algebra and let $(\mathcal{N},\nu)$ be an atomless semifinite one. If $T:L_2(\N)\to L_2(\M)$ and $T:L_1(\N)\to L_1(\M)$ is a linear contraction, then for $M$ given by \eqref{M} we have
$$\tau(M(|Tx|))\lesssim\Big(\int_0^1M(\mu(s,x))ds+M(\|x\|_{L_1+L_2})\Big).$$
\end{lemma}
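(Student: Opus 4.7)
The plan is to mimic the decomposition of Lemma \ref{sum interpolation} in the $\Phi$-moment setting with $\Phi = M$. First I would reduce to the self-adjoint case $x = x^*$ by splitting into real and imaginary parts; this is permissible because $M$ satisfies the $\Delta_2$-condition, which together with the Fack-Kosaki triangle $\mu(a+b) \prec\prec \mu(a) + \mu(b)$ and convexity of $M$ yields the quasi-triangle inequality $\tau(M(|a+b|)) \lesssim \tau(M(|a|)) + \tau(M(|b|))$. I would then set $p := e_{(\mu(1,x),\infty)}(|x|)$, fix a projection $q \in \N$ with $p \leq q$ and $\nu(q) = 1$ (available since $\N$ is atomless), and decompose $x = qxq + qx(1-q) + (1-q)x$.

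For the diagonal piece $qxq$: it lives in the finite von Neumann subalgebra $q\N q$, on which the restriction of $T$ is still an $L_1$- and $L_2$-contraction into $\M$. Lemma \ref{m simple est} then applies directly and gives
\[ \tau(M(|T(qxq)|)) \leq 4\nu(M(|qxq|)) = 4\int_0^1 M(\mu(s,qxq))\,ds \leq 4\int_0^1 M(\mu(s,x))\,ds, \]
where the last inequality uses $\mu(qxq) \leq \mu(x)$.

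For the two off-diagonal pieces I would recycle the $L_2$-estimates already proved inside Lemma \ref{sum interpolation}, namely $\|qx(1-q)\|_2, \|(1-q)x\|_2 \lesssim \|x\|_{L_1+L_2}$, together with the $L_2$-contractivity of $T$. The key elementary input is the transfer bound
\[ \tau(M(|y|)) \leq 2\min(\|y\|_2^2, \|y\|_2) \lesssim M(\|y\|_2), \qquad y \in L_2(\M), \]
valid in any finite von Neumann algebra with $\tau(1) = 1$. This combines $M(t) \leq t^2$, $M(t) \leq 2t$, the Cauchy--Schwarz bound $\|y\|_1 \leq \|y\|_2$, and the lower bound $M(t) \geq \min(t,t^2)$. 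Applied to $T(qx(1-q))$ and $T((1-q)x)$ it yields $\tau(M(|T(qx(1-q))|)), \tau(M(|T((1-q)x)|)) \lesssim M(\|x\|_{L_1+L_2})$.

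Summing the three contributions via one more invocation of the $\tau \circ M$ quasi-triangle inequality finishes the proof. I expect the main obstacle to be the clean formulation of the transfer estimate $\tau(M(|y|)) \lesssim M(\|y\|_2)$ working uniformly in the two regimes $\|y\|_2 \lessgtr 1$, and the careful bookkeeping of absolute constants arising from repeated use of $\Delta_2$ and quasi-triangle; once that is in place, the argument is a direct rearrangement of the proof of Lemma \ref{sum interpolation} with Lemma \ref{m simple est} playing the role of interpolation.
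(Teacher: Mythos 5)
Your proposal is correct and follows essentially the same route as the paper: the same spectral-projection decomposition into a corner piece handled by Lemma \ref{m simple est} on the finite algebra $q\N q$ plus two off-diagonal pieces controlled in $L_2$ by $\|x\|_{L_1+L_2}$, reassembled with the $\tau\circ M$ quasi-triangle inequality. The only (harmless) divergence is in the transfer estimate $\tau(M(|y|))\lesssim M(\|y\|_2)$, which the paper obtains via Jensen's inequality applied to the concave function $M_0(t)=M(t^{1/2})$, whereas you use the elementary pointwise comparison $M(t)\leq\min(t^2,2t)$ together with $\min(t^2,t)\leq M(t)$; both are valid under the normalization $\tau(1)=1$ that you correctly flag.
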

\begin{proof} To see this, fix $x=x^*\in L_0(\N)$ and consider the spectral projection
$$p=e_{(\mu(1,x),\infty)}(|x|).$$
Clearly, trace of $p$ is less than or equal to $1.$ Fix a projection $q\geq p$ whose trace equals $1.$ Clearly, $T:L_1(q\N q)\to L_1(\M)$ and $T:L_2(q\N q)\to L_2(\M)$ is a contraction.
Again by \cite[Theorem 3.3.3]{LSZ}, we have
$${\color{red}\mu(Tx)}\prec\prec\mu(T(pxp))+\mu(T((1-p)xp))+\mu(T(x(1-p))).$$
Therefore,
\begin{eqnarray*}\tau(M(|Tx|))&\leq&{\color{red}\mathbb E}\Big(M\Big(\mu(T(pxp))+\mu(T((1-p)xp))+\mu(T(x(1-p)))\Big)
\\&\lesssim&\tau(M(T(3|pxp|)))+\tau(M(3|T((1-p)xp)|))+\tau(M(3|T(x(1-p))|)).
\end{eqnarray*}
Applying Lemma \ref{m simple est} to the finite algebras $\mathcal{M}$ and $q\mathcal{N}q$ and noting that $M(3t)\leq 9M(t)$ for every $t>0$, we obtain that
$$\tau(M(T(3|pxp|)))\leq 4\sigma(M(3|pxp|))\leq 36\sigma(M(|pxp|))\leq 36\int_0^1M(\mu(s,x))ds.$$
Define concave function $M_0$ by setting $M_0(t)=M(t^{1/2}),\,t>0.$ It follows from Jensen inequality and the assumption that
$$\tau(M(|Tz|))=\tau(M_0(|Tz|^2))\leq M_0(\tau(|Tz|^2))=M_0(\|Tz\|_2^2)=M(\|Tz\|_2)\leq M(\|z\|_2).$$
Therefore,
\begin{multline*}\tau(M(3|T((1-p)xp)|))+\tau(M(3|T(x(1-p))|))
\leq 2M(3\|x(1-p)\|_2)\\=2M(3\|\mu(x)\chi_{(1,\infty)}\|_2)\lesssim 2M(3\|x\|_{L_1+L_2})\leq 18M(\|x\|_{L_1+L_2}).
\end{multline*}
\end{proof}

\begin{lemma}\label{modular left sum interpolation} Let $(\mathcal{M},\tau)$ be a finite von Neumann algebra and let $(\mathcal{N},\nu)$ be a semifinite atomless one. Let $\Phi$ be a $2$-concave Orlicz function. If $T:L_2(\N)\to L_2(\M)$ and $T:L_1(\N)\to L_1(\M)$ is a linear contraction, then
$$\tau(\Phi(|Tx|))\lesssim\int_0^1\Phi(\mu(s,x))ds+\Phi(\|x\|_{L_1+L_2}).$$
\end{lemma}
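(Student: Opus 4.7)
The plan is to follow the template of Lemma \ref{modular left m} with $M$ replaced by the general $2$-concave $\Phi$. First I would fix $x=x^{*}$, let $p:=e_{(\mu(1,x),\infty)}(|x|)$, and choose $q\in\mathcal{N}$ with $q\geq p$ and $\nu(q)=1$; then $T$ restricts to an $L_{1}$- and $L_{2}$-contraction from $q\mathcal{N}q$ into $\mathcal{M}$. The Hardy-Littlewood submajorization $\mu(Tx)\prec\prec \mu(T(pxp))+\mu(T((1-p)xp))+\mu(T(x(1-p)))$ combined with \eqref{majorization phi} and the $\Delta_{2}$-estimate $\Phi(3t)\leq 9\Phi(t)$ (which is the $2$-concavity of $\Phi$ applied at $\alpha=3$) should yield
$$\tau(\Phi(|Tx|))\;\lesssim_{\Phi}\;\tau(\Phi(|T(pxp)|))+\tau(\Phi(|T((1-p)xp)|))+\tau(\Phi(|T(x(1-p))|)).$$

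For the second and third terms I would argue by Jensen's inequality in direct imitation of Lemma \ref{modular left m}: since $\Phi_{0}(t):=\Phi(t^{1/2})$ is concave (equivalent to $2$-concavity of $\Phi$) and $\tau$ is a tracial state on $\mathcal{M}$,
$$\tau(\Phi(|Tz|))=\tau(\Phi_{0}(|Tz|^{2}))\leq \Phi_{0}(\|Tz\|_{2}^{2})=\Phi(\|Tz\|_{2})\leq \Phi(\|z\|_{2}).$$
Applying this with $z=(1-p)xp$ and $z=x(1-p)$, and combining with the bound $\|x(1-p)\|_{2}\lesssim\|x\|_{L_{1}+L_{2}}$ already derived in Lemma \ref{modular left m} together with the scaling $\Phi(ct)\leq c^{2}\Phi(t)$ for $c\geq 1$, both of these pieces are dominated by a $\Phi$-dependent constant times $\Phi(\|x\|_{L_{1}+L_{2}})$.

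The remaining piece $\tau(\Phi(|T(pxp)|))$ reduces, via $\mu(t,pxp)\leq \mu(t,x)\chi_{(0,1)}(t)$, to the $\Phi$-analog of Lemma \ref{m simple est}: that for every $L_{1}$- and $L_{2}$-contraction $T$ between trace-one algebras and every $2$-concave Orlicz $\Phi$ one has $\tau(\Phi(|Ty|))\lesssim_{\Phi}\sigma(\Phi(|y|))$. My plan to establish this analog is to use the integral representation
$$\Phi(t)=\tfrac{1}{2}\int_{0}^{\infty}\min(t^{2},s^{2})\,d\eta(s),$$
which exists because $\Phi(t^{1/2})$ is concave with $\Phi(0)=0$, and then to apply Lemma \ref{m simple est} to $y/s$ with the auxiliary rescaled Orlicz function $M_{s}(t):=s^{2}M(t/s)$ before integrating against $d\eta(s)$ via Fubini. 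The hard part will be absorbing the pointwise discrepancy $M_{s}(t)-\min(t^{2},s^{2})=2s(t-s)_{+}$ that arises on $\{t>s\}$; I expect this to be handled either by splitting $y$ at each scale $s$ into the parts $|y|\leq s$ and $|y|>s$ and exploiting the $L_{1}$- and $L_{2}$-contractivity on each piece separately (mirroring the proof of Lemma \ref{m simple est} itself), or by invoking a Peetre $K$-functional argument: since $L_{\Phi}\in Int(L_{1},L_{2})$ for $2$-concave Orlicz $\Phi$, the modular $\sigma(\Phi(|y|))$ admits a representation through $K(\cdot,y;L_{1},L_{2})$, and the monotonicity $K(t,Ty;L_{1},L_{2})\leq K(t,y;L_{1},L_{2})$ under any $L_{1}$- and $L_{2}$-contraction then delivers the desired bound, which when combined with the two Jensen estimates completes the proof.
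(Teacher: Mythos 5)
Your outer steps (the three--term splitting via $p$ and $q$, the estimate $\Phi(3t)\le 9\Phi(t)$ from $2$-concavity, and the Jensen argument with $\Phi_0(t)=\Phi(t^{1/2})$) are fine, and the sub-lemma you reduce to, namely $\tau(\Phi(|Ty|))\lesssim_\Phi\sigma(\Phi(|y|))$ for $L_1$- and $L_2$-contractions between finite trace algebras, is indeed true. The genuine gap is in your proposed proof of that sub-lemma: the representation $\Phi(t)=\tfrac12\int_0^\infty\min(t^2,s^2)\,d\eta(s)$ uses the wrong building blocks, and the discrepancy $M_s(t)-\min(t^2,s^2)=2s(t-s)_+$ is not absorbable in general. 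Already for $\Phi(t)=t$ (which is $2$-concave) one has $d\eta(s)=s^{-2}ds$, and the error your Fubini step produces is $\int_0^t s(t-s)s^{-2}\,ds=\int_0^t\bigl(\tfrac{t}{s}-1\bigr)ds=\infty$; equivalently, $\int_0^\infty M_s(t)\,d\eta(s)$ diverges while $\Phi(t)$ is finite. Your first fallback (splitting $|y|$ at each level $s$) would have to prove the per-scale inequality $\tau(\min(|Ty|^2,s^2))\lesssim\sigma(\min(|y|^2,s^2))$, which is false because $\min(t^2,s^2)$ is bounded and non-convex: on $(0,1)$ take $y=N\chi_{(0,N^{-2})}$ and $Tz=N\bigl(\int_0^{N^{-2}}z\bigr)\chi_{(0,N^{-1})}$; then $T$ is an $L_1$- and $L_2$-contraction, $Ty=\chi_{(0,N^{-1})}$, and with $s=1$ the left side equals $N^{-1}$ while the right side equals $N^{-2}$. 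Your second fallback, $K$-monotonicity for the couple $(L_1,L_2)$, yields norm inequalities for interpolation spaces but not modular inequalities, so as stated it does not close the gap either.

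The paper's proof supplies exactly the missing device, and with the right blocks no discrepancy arises at all. Write $\Phi(t)=t\phi(t)$; by $2$-concavity $\phi$ is quasi-concave, so $\phi\approx\phi_0$ with $\phi_0$ concave and $\phi_0(t)=\int_0^\infty\min\{t,\lambda\}\,d(-\phi_0'(\lambda))$, whence $\Phi(t)\approx\int_0^\infty\min\{t^2,\lambda t\}\,d(-\phi_0'(\lambda))\approx\int_0^\infty M(t/\lambda)\,d\nu(\lambda)$: the correct blocks are $\min(t^2,\lambda t)$, i.e.\ rescaled copies of the genuine Orlicz function $M$, not $\min(t^2,s^2)$. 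Once you have this representation you need neither the separate $\Phi$-analogue of Lemma \ref{m simple est} nor a rerun of the $p,q$-projection splitting for general $\Phi$: since the hypothesis on $T$ is scale-invariant and the constant in Lemma \ref{modular left m} is absolute, apply that lemma to $x/\lambda$ for each $\lambda$, use $\mu(s,x/\lambda)=\mu(s,x)/\lambda$ and $\|x/\lambda\|_{L_1+L_2}=\|x\|_{L_1+L_2}/\lambda$, and integrate in $d\nu(\lambda)$ by Fubini to obtain the stated bound. In short, your architecture repackages Lemma \ref{modular left m}, but the decisive ingredient --- the integral decomposition of $\Phi$ into rescaled copies of $M$ --- is the step your proposal gets wrong.
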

\begin{proof} Define the function $\phi:(0,\infty)\to(0,\infty)$ by setting $\Phi(t)=t\phi(t),$ $t>0.$ Since $\Phi$ is convex and $2$-concave, it follows that $\phi$ is quasi-concave; see \cite[Lemma 6]{AS2014}. By \cite[Theorem II.1.1] {KPS}, there exists a concave increasing function $\phi_0$ such that $\frac12\phi_0\leq\phi\leq\phi_0.$ Without loss of generality, $\phi_0(0)=0$ and $\phi_0(t)=o(t),$ $t\to\infty.$ By \cite[Lemma 5.4.3]{BergLofstrom}, we have
$$\phi_0(t)=\int_0^{\infty}\min\{t,\lambda\}d(-\phi_0'(\lambda)).$$
Arguing as in Lemma \ref{jsz-like lemma}, we obtain that
$$\Phi(t)\approx\int_0^{\infty}M(\frac{t}{\lambda})d\nu(\lambda)$$
for some (positive) measure $\nu$ on $(0,\infty).$
Hence,
$$\tau(\Phi(|x|))\approx\int_0^{\infty}\tau\Big(M(\frac{|x|}{\lambda})\Big)d\nu(\lambda).$$
Therefore, by Lemma \ref{modular left m}, we have
\begin{eqnarray*}\tau(\Phi(|Tx|))&\approx&\int_0^{\infty}\tau\Big(M(\frac{|Tx|}{\lambda})\Big)d\nu(\lambda)
\\&\lesssim&\int_0^\infty
\Big(\int_0^1M(\mu(s,\frac{x}{\lambda}))ds+M(\frac1\lambda\|x\|_{L_1+L_2})\Big)d\nu(\lambda)
\\&=&\int_0^1\int_0^\infty M\Big(\mu(s,\frac{x}{\lambda})\Big)d\nu(\lambda)ds+\int_0^\infty M\Big(\frac1\lambda\|x\|_{L_1+L_2}\Big)d\nu(\lambda)
\\&\approx&\int_0^1\Phi(\mu(s,x))ds+\Phi(\|x\|_{L_1+L_2}),
\end{eqnarray*} which is as our desired inequality and the proof is complete.
\end{proof}

\begin{lemma}\label{right c head} Let $\Phi$ be a $2$-convex Orlicz function. We have
$$(\tau\otimes\int)\Big(\Phi\Big(|\sum_{k\geq0}y_k\otimes r_k|\Big)\Big)\geq\tau(\Phi((\sum_{k\geq0}|y_k|^2)^{1/2})).$$
\end{lemma}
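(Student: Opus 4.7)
The plan is to apply the tracial conditional expectation $\mathcal{E} := \mathrm{id}_{\mathcal{M}}\otimes \int_0^1(\cdot)\,d\omega$ from $\mathcal{M}\bar{\otimes} L_\infty(0,1)$ onto $\mathcal{M}$, combined with the Hardy--Littlewood majorization already recorded in \eqref{majorization phi}. The idea is that $2$-convexity is precisely what allows us to move the Rademacher average \emph{inside} a power of $\Phi$, reducing the right-hand side to the conditional expectation of $|Y|^2$ where $Y=\sum_{k}y_k\otimes r_k$.

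First I would introduce $Y := \sum_{k\geq 0} y_k\otimes r_k$ and observe, using orthonormality of the Rademacher system, that
$$\mathcal{E}(|Y|^2) = \sum_{j,k\geq 0}\Big(\int_0^1 r_j r_k\,d\omega\Big) y_j^* y_k = \sum_{k\geq 0} |y_k|^2.$$
Since $\mathcal{E}$ is a trace-preserving normal conditional expectation, it is a contraction on both $L_1$ and $L_\infty$ of $\mathcal{M}\bar{\otimes} L_\infty(0,1)$, so by the Calder\'on--Mityagin theorem recalled in Subsection 2.5 we obtain the submajorization
$$\sum_{k\geq 0}|y_k|^2 \;=\; \mathcal{E}(|Y|^2) \;\prec\prec\; |Y|^2.$$

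Next I would exploit $2$-convexity of $\Phi$ by setting $\Phi_0(t) := \Phi(|t|^{1/2})$, which is even, convex, vanishes at $0$ and tends to $\infty$ at $\infty$, and is therefore itself an Orlicz function. Applying \eqref{majorization phi} to $\Phi_0$ and the above submajorization yields
$$\tau\Big(\Phi_0\Big(\sum_{k\geq 0}|y_k|^2\Big)\Big) \;\leq\; (\tau\otimes{\textstyle\int})\bigl(\Phi_0(|Y|^2)\bigr).$$
Since $\Phi_0(s^2)=\Phi(s)$ for $s\geq 0$, this is exactly
$$\tau\Big(\Phi\Big(\Big(\sum_{k\geq 0}|y_k|^2\Big)^{1/2}\Big)\Big) \;\leq\; (\tau\otimes{\textstyle\int})\bigl(\Phi(|Y|)\bigr),$$
which is the desired inequality.

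The principal obstacle is a technical one: in full generality $|Y|^2$ and $\sum_k |y_k|^2$ need not belong to $L_1(\mathcal{M})+\mathcal{M}$, so I cannot apply \eqref{majorization phi} directly. I would handle this by truncating to the finite sums $Y_N := \sum_{k=0}^N y_k\otimes r_k$, for which all operators involved are bounded, applying the argument above to obtain the inequality for each $N$, and then passing to the limit $N\to\infty$ via the monotone convergence theorem for the trace (using positivity and monotonicity of $\Phi_0$, together with normality of $\mathcal{E}$).
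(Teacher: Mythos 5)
Your argument is correct, and it reaches the conclusion by a genuinely different route than the paper. The paper discretizes the Rademacher variables: it partitions $(0,1)$ into the $2^n$ atoms $A_\epsilon$ on which the signs are constant, applies the trace Jensen inequality \eqref{convex phi} to the convex function $\Phi_0(t)=\Phi(t^{1/2})$ and the average $2^{-n}\sum_{\epsilon}|\sum_k\epsilon(k)y_k|^2$, and then computes that this average equals $\sum_k|y_k|^2$ by orthogonality of the signs. You replace the explicit averaging by the conditional expectation $\mathcal{E}=\mathrm{id}\otimes\int$, identify $\mathcal{E}(|Y|^2)=\sum_k|y_k|^2$ by the same orthogonality, and then invoke $\mathcal{E}(|Y|^2)\prec\prec|Y|^2$ (Calder\'on--Mityagin) together with the modular monotonicity \eqref{majorization phi} applied to the Orlicz function $\Phi_0$. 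The two proofs hinge on the same two facts ($2$-convexity making $\Phi_0$ convex, and the Rademacher orthogonality computation), but yours is packaged at a higher level of generality: it works verbatim for any trace-preserving conditional expectation in place of the Rademacher average, and it trades the combinatorial bookkeeping for the submajorization machinery the paper already uses elsewhere (e.g.\ $\mathcal{E}_{k-1}(|x_k|^2)\prec\prec|x_k|^2$ in the proof of Proposition \ref{lemma 2-4}). The paper's proof is more self-contained, relying only on \eqref{convex phi}.

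One small remark on your final truncation step: the left-hand sides $(\tau\otimes\int)(\Phi(|Y_N|))$ are not monotone in $N$, so "monotone convergence" only controls the right-hand side, and passing to the limit as stated has a soft spot. You can avoid truncation entirely: if $(\tau\otimes\int)(\Phi(|Y|))=\infty$ the inequality is trivial, and otherwise $2$-convexity gives $\Phi(s)\geq\Phi(1)s^2$ for $s\geq1$, so on the probability space $\M\bar{\otimes}L_\infty(0,1)$ finiteness of the modular forces $Y\in L_2$, hence $|Y|^2\in L_1$ and your direct submajorization argument applies to the full sum. (The paper's own reduction to finitely many summands glosses over the same issue.)
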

\begin{proof} Without loss of generality, we have finitely many (let us say, $n$) summands in the inequality above. Let
$\mathcal{R}_n$ be the set of all maps from $\{0,\cdots,n-1\}$ into $\{-1,1\},$ that is
$$\mathcal{R}_n=\Big\{\epsilon:\{0,\cdots,n-1\}\to\{-1,1\}\Big\}.$$
For every $\epsilon\in\mathcal{R}_n,$ we set
$$T_{\epsilon}:=|\sum_{k=0}^{n-1}\epsilon(k)y_k|^2\quad {\rm and}\quad A_\epsilon:=\big\{t\in (0,1):r_k(t)=\epsilon(k),0\leq k\leq n-1\big\}.$$
It is clear that the Lebesgue measure of $A_\epsilon$ is $2^{-n}.$
Set $\Phi_0(t)=\Phi(t^{1/2}),$ $t>0.$ Since $\Phi$ is $2$-convex, it follows that $\Phi_0$ is convex. Hence, we have
\begin{equation}\label{8}
(\tau\otimes\int)\Big(\Phi\Big(|\sum_{k=0}^{n-1}y_k\otimes r_k|\Big)\Big)=\sum_{\epsilon\in\mathcal{R}_n}(\tau\otimes\int_{A_\epsilon})\Big(\Phi\Big(|\sum_{k=0}^{n-1}y_k\otimes r_k|\Big)\Big)=2^{-n}\sum_{\epsilon\in\mathcal{R}_n}\tau(\Phi_0(T_{\epsilon})).
\end{equation}
Applying \eqref{convex phi}, we obtain
\begin{equation}\label{9}2^{-n}\sum_{\epsilon\in\mathcal{R}_n}\tau(\Phi_0(T_{\epsilon}))\geq\tau(\Phi_0(2^{-n}\sum_{\epsilon\in\mathcal{R}_n}T_{\varepsilon})).
\end{equation}
Note that for $0\leq l,m\leq n-1,$
$$\sum_{\epsilon\in \mathcal{R}_n}\epsilon(l)\epsilon(m)=
\begin{cases}
2^{n},\quad l=m\\
0,\quad l\neq m.
\end{cases}$$
It is immediate that
$$2^{-n}\sum_{\epsilon\in\mathcal{R}_n}T_{\varepsilon}=2^{-n}\sum_{\epsilon\in\mathcal{R}_n}|\sum_{k=0}^{n-1}\epsilon(k)y_k|^2=2^{-n}\sum_{l,m=0}^{n-1}y_l^*y_m\sum_{\epsilon\in \mathcal{R}_n}\epsilon(l)\epsilon(m)=\sum_{k=0}^{n-1}|y_k|^2.$$
Consequently,
\begin{equation}\label{99}\tau(\Phi_0(2^{-n}\sum_{\epsilon\in\mathcal{R}_n}T_{\varepsilon}))=\tau(\Phi_0(\sum_{k=0}^{n-1}|y_k|^2))=\tau(\Phi((\sum_{k=0}^{n-1}|y_k|^2)^{1/2})).
\end{equation}
Combining \eqref{8}, \eqref{9} and \eqref{99}, we conclude the proof.
\end{proof}

\begin{lemma}\label{right c tail} Let $\Phi$ be an arbitrary Orlicz function. If $0\leq z_k\in L_\Phi(\M),$ $k\geq0,$ then we have
$$\tau(\Phi(\sum_{k\geq0}z_k))\geq\frac12\Big({\color{red}\mathbb E}(\Phi(\mu(Z)\chi_{(0,1)}))+\Phi(\|Z\|_1)\Big),\quad Z=\sum_{k\geq0}z_k\otimes e_k.$$
\end{lemma}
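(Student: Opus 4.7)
The plan is to prove the lemma by establishing two separate lower bounds for $\tau(\Phi(S))$, where $S:=\sum_{k\geq 0}z_k$, and then to average them. First, for the $\Phi(\|Z\|_1)$ half, I would apply Jensen's inequality on the probability space $(\M,\tau)$: since $\Phi$ is convex and $\tau$ is a normalised trace,
$$\tau(\Phi(S))=\int_0^1\Phi(\mu(t,S))\,dt\geq \Phi\Big(\int_0^1\mu(t,S)\,dt\Big)=\Phi(\tau(S)),$$
and since each $z_k\geq 0$ makes $Z$ positive, $\|Z\|_1=(\tau\otimes\#)(Z)=\sum_k\tau(z_k)=\tau(S)$, giving $\tau(\Phi(S))\geq\Phi(\|Z\|_1)$.

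For the $\mathbb{E}(\Phi(\mu(Z)\chi_{(0,1)}))$ half, I would first dominate the target quantity by the full integral:
$$\mathbb{E}(\Phi(\mu(Z)\chi_{(0,1)}))=\int_0^1\Phi(\mu(t,Z))\,dt\leq \int_0^\infty\Phi(\mu(t,Z))\,dt=(\tau\otimes\#)(\Phi(Z))=\sum_{k\geq 0}\tau(\Phi(z_k)).$$
Thus it suffices to prove the operator super-additivity $\tau(\Phi(S))\geq\sum_{k\geq 0}\tau(\Phi(z_k))$. By monotone convergence (the partial sums $S_n=\sum_{k\leq n}z_k$ increase to $S$, and the continuous functional calculus together with the normality of $\tau$ respects this), this reduces to the two-variable inequality $\tau(\Phi(A+B))\geq\tau(\Phi(A))+\tau(\Phi(B))$ for $A,B\geq 0$ in $L_\Phi(\M)$.

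To prove the two-variable super-additivity I plan to exploit the integral representation of a convex function vanishing at $0$:
$$\Phi(x)=cx+\int_0^\infty (x-r)_+\,d\nu(r),\quad c=\Phi'(0+)\geq 0,$$
for a positive Borel measure $\nu$ on $(0,\infty)$. Because $\tau(A+B)=\tau(A)+\tau(B)$, the linear term cancels, and the claim reduces to
$$\tau((A+B-r)_+)\geq \tau((A-r)_+)+\tau((B-r)_+),\quad r>0.$$
Using the spectral identity $(X-r)_+=X-X\wedge r$, this is equivalent to the concave subadditivity
$$\tau(A\wedge r)+\tau(B\wedge r)\geq \tau((A+B)\wedge r),$$
which is precisely Rotfel'd's inequality for positive operators applied to the concave, increasing function $f(t)=t\wedge r$ with $f(0)=0$.

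The main technical step I expect is this last one: passage through the integral representation that converts the operator-level super-additivity of the convex $\Phi$ into the scalar-level Rotfel'd subadditivity of $t\mapsto t\wedge r$. Once this is in hand, summing the two lower bounds from the first two paragraphs yields $2\tau(\Phi(S))\geq \mathbb{E}(\Phi(\mu(Z)\chi_{(0,1)}))+\Phi(\|Z\|_1)$, which is exactly the claim.
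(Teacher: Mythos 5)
Your proof is correct, and the second (Jensen) half coincides with the paper's argument verbatim; but for the term $\mathbb E(\Phi(\mu(Z)\chi_{(0,1)}))$ you take a genuinely different route. The paper truncates each $z_k$ at the level $\mu(1,Z)$ and invokes the submajorization of a disjoint sum of positive operators by their ordinary sum (\cite[Lemma 3.3.7]{LSZ}), concluding $\mu(Z)\chi_{(0,1)}\prec\prec\mu(\sum_k z_k)$ and then applying the Hardy--Littlewood--P\'olya principle \eqref{majorization phi}. You instead prove the trace superadditivity $\tau(\Phi(\sum_k z_k))\ge\sum_k\tau(\Phi(z_k))=(\tau\otimes\#)(\Phi(Z))=\int_0^\infty\Phi(\mu(t,Z))\,dt$, which is in fact a slightly stronger bound (the full half-line integral rather than its restriction to $(0,1)$), and is of course consistent with the paper's machinery since the same LSZ lemma gives $Z\prec\prec\sum_k z_k$ without truncation. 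Your reduction chain is sound: the integral representation $\Phi(x)=cx+\int_0^\infty(x-r)_+\,d\nu(r)$, the identity $(x-r)_+=x-x\wedge r$ (legitimate here because $\tau$ is finite and $z_k\in L_\Phi(\M)\subset L_1(\M)$, so all the traces being subtracted are finite), and the subadditivity $\tau((A+B)\wedge r)\le\tau(A\wedge r)+\tau(B\wedge r)$, which is Rotfel'd's inequality for the concave function $t\mapsto t\wedge r$ and holds in semifinite von Neumann algebras (e.g.\ as a consequence of the Brown--Kosaki trace Jensen inequality $\tau(f(V^*CV))\le\tau(V^*f(C)V)$ for contractions $V$ and convex $f$ with $f(0)\le 0$, applied with $V=X^{1/2}$, $X=C^{-1/2}AC^{-1/2}$). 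Two small points you should make explicit in a final write-up: (a) the passage from two summands to infinitely many, which follows by induction plus the monotonicity $\tau(\Phi(S_n))\le\tau(\Phi(S))$ for the increasing partial sums $S_n\le S$; and (b) a reference or short proof for the Rotfel'd/superadditivity step in the von Neumann algebra setting, since that is the only nontrivial external input your argument relies on. What your approach buys is independence from the disjointification lemma of \cite{LSZ}; what the paper's approach buys is that submajorization plus \eqref{majorization phi} is already set up earlier in the text, so its proof is shorter in context.
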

\begin{proof} Without loss of generality, suppose that the spectral projection $e_t({z_k})=0$ for every $t>0.$ Set
$$z_{1k}=z_ke_{(\mu(1,Z),\infty)}(z_k),\quad z_{2k}=z_ke_{[0,\mu(1,Z)]}(z_k).$$
By \cite[Lemma 3.3.7]{LSZ}, we have
$$\sum_{k\geq0}z_{1k}\otimes e_k\prec\prec\sum_{k\geq0}z_{1k}$$
and, therefore,
$$\mu(Z)\chi_{(0,1)}\prec\prec{\color{red}\mu\Big(\sum_{k\geq0}z_k\Big)}.$$
Thus, by \eqref{majorization phi}
$$\tau(\Phi(\sum_{k\geq0}z_k))\geq {\color{red}\mathbb E}(\Phi(\mu(Z)\chi_{(0,1)})).$$
By Jensen inequality, we have
$$\tau(\Phi(\sum_{k\geq0}z_k))=\int_0^1\Phi(\mu(t,\sum_{k\geq0}z_k))dt\geq\Phi(\tau(\sum_{k\geq0}z_k))=\Phi(\sum_{k\geq0}\|z_k\|_1)=\Phi(\|Z\|_1).$$
The assertion follows by combining the latter two inequalities.
\end{proof}

Now we are in a position to prove our third main result, Theorem \ref{third main}.
\begin{proof}[Proof of Theorem \ref{third main}] We first prove the first assertion.
Consider the operator $T$ defined as the proof of Proposition \ref{left estimate}. Then $T$ is both bounded from $L_1(\M\bar{\otimes}\ell_\infty)$ to $L_1(\M)$ and from $L_2(\M\bar{\otimes}\ell_\infty)$ to $L_2(\M).$
Suppose now $\{x_k\}_{k\geq0}$ is a sequence of martingale differences. Applying  Lemma \ref{modular left sum interpolation}, we have
\begin{eqnarray*}\tau(\Phi(|\sum_{k\geq0}x_k|))&=&\tau(\Phi(|TX|))\lesssim\int_0^1\Phi\Big(\mu(s,X)\Big)ds+\Phi(\|X\|_{L_1+L_2}).
\end{eqnarray*}
We now turn to the proof of the second assertion. Since $\Phi$ is $2$-convex and $q$-concave, it follows from \cite[Corollary 3.1]{BC} that, for every $t\in(0,1)$
$$\tau\Big(\Phi\Big(|\sum_{k\geq0}x_k|\Big)\Big)\approx_{{\Phi}}\tau\Big(\Phi\Big(|\sum_{k\geq0}r_k(t)x_k|\Big)\Big).$$
Hence,
$$\tau\Big(\Phi\Big(|\sum_{k\geq0}x_k|\Big)\Big)\approx_{{\Phi}}(\tau\otimes\int)\Big(\Phi\Big(|\sum_{k\geq0}r_k\otimes x_k|\Big)\Big).$$
Set $\Phi_0(t):=\Phi(t^{1/2}),$ $t>0.$ It follows from Lemma \ref{right c head} that
\begin{equation}\label{6.1}
\tau\Big(\Phi\Big(|\sum_{k\geq0}x_k|\Big)\Big)\gtrsim_\Phi\tau\Big(\Phi\Big((\sum_{k\geq0}|x_k|^2)^{1/2}\Big)\Big)=\tau\Big(\Phi_0\Big(\sum_{k\geq0}|x_k|^2\Big)\Big).
\end{equation}
Since $\Phi$ is $2$-convex, it follows that $\Phi_0$ is convex. By Lemma \ref{right c tail}, we have
\begin{eqnarray*}\tau\Big(\Phi_0\Big(\sum_{k\geq0}|x_k|^2\Big)\Big)&\geq&
\frac12\Big({\color{red}\mathbb E}(\Phi_0\Big(\mu(\sum_{k\geq 0}|x_k|^2\otimes e_k)\chi_{(0,1)})\Big)+\Phi_0(\|\sum_{k\geq 0}|x_k|^2\otimes e_k\|_1)\Big)
\\&\geq&\frac12\Big({\color{red}\mathbb E}(\Phi_0\Big(\mu^2(X)\chi_{(0,1)})\Big)+\Phi_0(\|X\|_2^2)\Big)
\\&=&\frac12{\color{red}\mathbb E}(\Phi(\mu(X)\chi_{(0,1)}))+\Phi(\|X\|_2).
\end{eqnarray*}
Combining this inequality and \eqref{6.1}, we conclude the proof.
\end{proof}

Now we show that the assumption on $\Phi$ in Theorem \ref{third main} (\ref{tmb}) is sharp in the sense that the condition of $q$-concavity is necessary.
\begin{proposition}\label{q-concavity necessary} Let $(\M,\tau)$ be an arbitrary noncommutative probability space. Suppose that an Orlicz function $\Phi$ is such that for every sequence of martingale differences $(x_k)_{k\geq 0}\subset L_\Phi(\M)$ we have
$${\color{red}\mathbb E}\Big(\Phi(\mu(X)\chi_{(0,1)})\Big)\lesssim_{\Phi}\tau\Big(\Phi\Big(\sum_{k\geq0}x_k\Big)\Big),\quad X=\sum_{k\geq0}x_k\otimes e_k.$$
It follows that $\Phi$ is $q$-concave for some $q<\infty.$
\end{proposition}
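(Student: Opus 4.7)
The plan is to argue by contrapositive: suppose that $\Phi$ is not $q$-concave for any finite $q$, and produce a martingale difference sequence that violates the hypothesised inequality. By the Matuzewska--Orlicz characterisation of the $\Delta_2$-condition recalled in Section~2 (``$\Delta_2$-condition is equivalent to $q_\Phi<\infty$''), the hypothesis translates to $\Phi\notin\Delta_2$, so there exists a sequence $t_n\to\infty$ along which $\Phi(t_n)/\Phi(t_n/2)\to\infty$.

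Working in $\mathcal{M}=L_\infty(0,1)$, I would partition $(0,1)$ into $N$ equal-measure intervals $B_0,\dots,B_{N-1}$ and set up the ``unfolding'' filtration $\mathcal{M}_k=\sigma(B_0,\dots,B_k,\bigcup_{j>k}B_j)$ with martingale differences
\[
x_k=t_k\chi_{B_k}-\alpha_k\chi_{\bigcup_{j>k}B_j},\qquad k=0,\dots,N-2,
\]
where $\alpha_k$ is fixed by mean-zero over the $\mathcal{M}_{k-1}$-atom containing $B_k$ and the scalars $(t_k)$ are determined by the recursion that forces $\sum x_k$ to vanish on each revealed atom $B_m$ ($m\leq N-2$). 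A telescoping partial-fraction computation yields a closed form for $t_k$ and, in particular, an explicit formula for the residual amplitude $a_N$ of $\sum x_k$ on the terminal atom $B_{N-1}$; the goal is to arrange $a_N\leq t/2$.

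Given such a construction with $t_0=t$, the bound $|x_0|=t$ on $B_0$ of measure $1/N$ gives $\mu(X)\chi_{(0,1)}\geq t\chi_{(0,1/N)}$, hence $\mathbb{E}(\Phi(\mu(X)\chi_{(0,1)}))\geq \Phi(t)/N$, while $\tau(\Phi(|\sum x_k|))=\Phi(a_N)/N\leq \Phi(t/2)/N$. Taking $t=t_n$ in the assumed inequality therefore forces $\Phi(t_n)\lesssim_\Phi \Phi(t_n/2)$, contradicting $\Phi(t_n)/\Phi(t_n/2)\to\infty$ and completing the proof.

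The main obstacle lies in the calibration $a_N\leq t/2$. Martingale orthogonality $\|\sum x_k\|_2^2=\sum\|x_k\|_2^2\geq\|x_0\|_2^2=t^2/N$ forbids exact cancellation of $x_0$, and the naive construction on equal atoms yields instead $a_N=t$, producing only the trivial $\Phi(t)\lesssim\Phi(t)$. This is remedied by introducing an auxiliary degree of freedom---for instance by tensoring the commutative setup with an independent fair $\pm1$ Bernoulli variable (equivalently, splitting the terminal atom $B_{N-1}$ into two sub-atoms of equal measure on which the residual sum takes opposite signs), which absorbs the surplus amplitude into a pure sign and delivers $a_N=t/2$. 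The enlarged probability space still satisfies the hypothesis of the proposition (the differences remain in $L_\infty\subseteq L_\Phi$), so the contradiction follows as described.
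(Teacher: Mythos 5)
Your reduction of the statement to the $\Delta_2$-condition (via the equivalence, for convex $\Phi$, between $\Delta_2$ and $q$-concavity for some finite $q$) is the same endgame as the paper's, and is fine. The construction, however, has a fatal gap that your own $L_2$ computation already exposes and that your proposed repair does not remove. You need a residual $a_N\leq t/2$ supported on the terminal atom $B_{N-1}$ of measure $1/N$, while orthogonality of martingale differences forces $\|\sum_k x_k\|_2^2=\sum_k\|x_k\|_2^2\geq\|x_0\|_2^2\geq t^2/N$; a function bounded by $t/2$ in modulus and supported on a set of measure $1/N$ has $L_2$-norm squared at most $t^2/(4N)$, so in fact $a_N\geq t$ always. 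Splitting $B_{N-1}$ into two halves on which the residual takes values $\pm a$ changes nothing: the $L_2$-norm is insensitive to signs, the residual still has $\|\cdot\|_2^2=a^2/N$, and the same bound $a\geq t$ persists. No ``absorption of amplitude into a sign'' is possible here, and more generally no multi-step cancellation scheme in which the residual lives on a set of measure $1/N$ can produce $a_N<t$. As written, your argument only yields the trivial $\Phi(t)\lesssim\Phi(t)$.

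The paper sidesteps this barrier by extracting only a multiplicative gain of $10/9$ rather than $2$, which is all that is needed since $\Delta_2$ with any ratio $a>1$ self-improves by iteration. Concretely, it takes the two-term martingale $x_0=\tau(x)$, $x_1=x-\tau(x)$ with $x=9t\chi_{(0,1/3)}-6t\chi_{(1/3,1)}$, so $\tau(x)=-t$ and $x_1=10t\chi_{(0,1/3)}-5t\chi_{(1/3,1)}$. The mechanism is the opposite of yours: instead of making the martingale sum small by cancellation, one observes that the disjoint sum $X$ contains the piece $x_1$, whose supremum $10t$ exceeds the supremum $9t$ of the martingale sum $x$ because subtracting the (negative) mean inflates the positive part. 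The hypothesis then gives $\frac13\Phi(10t)+\frac23\Phi(5t)\lesssim_\Phi\frac13\Phi(9t)+\frac23\Phi(6t)\leq\Phi(9t)$, hence $\Phi(10t)\lesssim_\Phi\Phi(9t)$ for all $t>0$, which is $\Delta_2$ and hence $q$-concavity. If you want to keep the spirit of your approach, you must aim for a gain of $1+\varepsilon$ compatible with the $L_2$ constraint (e.g.\ by letting the disjointification, not the cancellation, produce the larger value), which is essentially what the paper's example does.
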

\begin{proof} Let $\mathcal{M}_0=\mathbb{C}$ and take $x\in\mathcal{M}_1.$ That is, our sequence $\{x_k\}_{k\geq0}$ is defined by the setting $x_0=\tau(x),$ $x_1=x-\tau(x)$ and $x_k=0,$ $k>1.$ {\color{red} Let $t\in \mathbb {R}_+$ be an arbitrary fixed constant. Take 
$$x=9t\chi_{(0,\frac13)}-6t\chi_{(\frac13,1)}.$$
We have
$$x_0=-t,\quad x_1=10t\chi_{(0,\frac13)}-5t\chi_{(\frac13,1)}.$$}
Hence,
$$\mu(X)=10t\chi_{(0,\frac13)}+5t\chi_{(\frac13,1)}+t\chi_{(1,2)}.$$
Therefore, by the assumption, we have
$$\frac13\Phi(10t)+\frac23\Phi(5t)\lesssim_{\Phi}\frac13\Phi(9t)+\frac23\Phi(6t).$$
Hence,
$$\Phi(10t)\lesssim_{\Phi}\Phi(9t).$$
This implies that $\Phi$ satisfies $\Delta_2$-condition, and hence $\Phi$ is $q$-concave. Indeed, if $\Phi$ satisfies $\Delta_2$-condition, then it is easy to find $0<q<\infty$ such that $\frac{\Phi(t)}{t^q}, t>0$ is decreasing. Since $\Phi$ is convex, $\frac{\Phi(t)}{t}, t>0$ is increasing. By \cite[Lemma 8]{JSZ}, we have that $\Phi$ is $q$-concave.

\end{proof}

\section{Noncommutative martingale inequalities: the $\Phi$-moment case }
In this section we prove the $\Phi$-moment Burkholder-Gundy inequalities by using results established in the preceding section.
\begin{lemma}\label{7-lemma}Let $\Phi$ be an Orlicz function and let $(x_k)_{k\geq0}$ be an arbitrary sequence from $L_\Phi(\M)$. If $\Phi$ is an $2$-concave Orlicz function, then
$$(\tau\otimes\int)\Big(\Phi\Big(|\sum_{k\geq0}x_k\otimes r_k|\Big)\Big)\lesssim\tau(\Phi((\sum_{k\geq0}|x_k|^2)^{1/2})).$$
\end{lemma}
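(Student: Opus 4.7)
The plan is to mirror the proof of Lemma~\ref{left hinchine estimate}, but to invoke the modular interpolation lemma (Lemma~\ref{modular left sum interpolation}) in place of the norm interpolation lemma (Lemma~\ref{sum interpolation}); this substitution is permitted precisely because $\Phi$ is $2$-concave. Without loss of generality I assume $\mathcal{M}$ is atomless (otherwise I tensor with $L_\infty(0,1)$ as in the opening reduction of Proposition~\ref{left estimate}). I then introduce the linear map
$$T : \sum_{k \geq 0} x_k \otimes e_{k1} \longmapsto \sum_{k \geq 0} x_k \otimes r_k,$$
initially defined on the column subspace of $L_p(\mathcal{M} \bar\otimes B(\ell_2))$ and extended to all of $L_p(\mathcal{M} \bar\otimes B(\ell_2))$ by first composing with the column projection $y \mapsto y \cdot (1 \otimes e_{11})$.

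The two boundedness estimates for $T$ are as follows. On the column subspace, $T$ is an $L_2$-isometry by the Rademacher orthogonality relations, giving $\|\sum_k x_k \otimes r_k\|_2 = (\sum_k \|x_k\|_2^2)^{1/2} = \|\sum_k x_k \otimes e_{k1}\|_2$. The $L_1$ bound comes from the noncommutative Khinchine inequality of Lust-Picquard and Pisier~\cite{LPP}: the trivial decomposition $x_k = x_k + 0$ in the usual column-plus-row Khinchine bound yields $\|\sum_k x_k \otimes r_k\|_{L_1(\mathcal{M} \bar\otimes L_\infty(0,1))} \lesssim \|(\sum_k |x_k|^2)^{1/2}\|_{L_1(\mathcal{M})} = \|\sum_k x_k \otimes e_{k1}\|_{L_1(\mathcal{M} \bar\otimes B(\ell_2))}$.

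Applying Lemma~\ref{modular left sum interpolation} to $T$ at the element $x = \sum_{k \geq 0} x_k \otimes e_{k1}$ then delivers
$$(\tau \otimes \int) \Phi\Big(\big|\sum_{k \geq 0} x_k \otimes r_k\big|\Big) \lesssim \int_0^1 \Phi(\mu(s, x))\, ds + \Phi(\|x\|_{L_1 + L_2}).$$
It remains to identify both terms on the right-hand side with $\tau(\Phi(A))$, where $A = (\sum_{k \geq 0}|x_k|^2)^{1/2}$. Since $|x|^2 = A^2 \otimes e_{11}$, the singular value function satisfies $\mu(\cdot, x) = \mu(\cdot, A)$, which is supported on $(0,1)$ because $\tau$ is a probability trace; hence $\int_0^1 \Phi(\mu(s, x))\, ds = \tau(\Phi(A))$ and $\|x\|_{L_1 + L_2} \leq \|x\|_{L_1} = \tau(A)$. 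Jensen's inequality on the probability space $((0,1), ds)$, applied to the convex function $\Phi$, then yields $\Phi(\tau(A)) = \Phi(\int_0^1 \mu(s, A)\, ds) \leq \int_0^1 \Phi(\mu(s, A))\, ds = \tau(\Phi(A))$, completing the argument.

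The one technical wrinkle will be that Lemma~\ref{modular left sum interpolation} formally requires the ambient algebra $\mathcal{N} = \mathcal{M} \bar\otimes B(\ell_2)$ to be atomless, which it is not. However, atomlessness is used only in the underlying Lemma~\ref{modular left m} to produce a projection $q \geq p$ with $\nu(q) = 1$, and in our setting the spectral projection $p$ of $|x| = A \otimes e_{11}$ automatically lies beneath $1 \otimes e_{11}$, so we may simply take $q = 1 \otimes e_{11}$. Alternatively, one could work from the start inside the corner $(1 \otimes e_{11})(\mathcal{M} \bar\otimes B(\ell_2))(1 \otimes e_{11}) \cong \mathcal{M}$, which is already a probability space.
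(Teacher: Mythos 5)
Your proof is correct and follows essentially the same route as the paper: the paper's proof likewise takes the operator $S$ from Lemma~\ref{left hinchine estimate}, applies Lemma~\ref{modular left sum interpolation} to $x=\sum_{k\geq0}x_k\otimes e_{k1}$, and concludes via the identification $\mu(x)=\mu\big((\sum_{k\geq0}|x_k|^2)^{1/2}\big)$ (supported on $(0,1)$) together with Jensen's inequality applied to $\Phi(\|x\|_{L_1+L_2})$. The only superfluous step is your closing ``wrinkle'': once $\mathcal{M}$ is assumed atomless, $\mathcal{M}\bar{\otimes}\mathcal{B}(\ell_2)$ is atomless as well (it has no minimal projections), so Lemma~\ref{modular left sum interpolation} applies directly, though your workaround via $q=1\otimes e_{11}$ is also harmless.
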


\begin{proof} Consider the operator $S$ defined in Lemma \ref{left hinchine estimate}. Observe that the operator $S$ satisfies the assumptions in Lemma \ref{modular left sum interpolation}. Applying this lemma, we have
$$(\tau\otimes\int)(\Phi(|Sx|))\lesssim\int_0^1\Phi(\mu(s,x))ds+\Phi(\|x\|_{L_1+L_2}),\quad x\in L_0(\M\bar{\otimes}B(\ell_2)).$$
Setting $x=\sum_{k\geq0}x_k\otimes e_{k1},$ we write
$$(\tau\otimes\int)\Big(\Phi\Big(|\sum_{k\geq0}x_k\otimes r_k|\Big)\Big)\lesssim\int_0^1\Phi(\mu(s,x))ds+\Phi(\|x\|_{L_1+L_2}).$$
However, $$\mu\Big(s,\sum_{k\geq0}x_k\otimes e_{k1}\Big)=\mu\Big(s,(\sum_{k\geq0}|x_k|^2)^{1/2}\Big),\quad s>0$$
which lives on $(0,1)$, and therefore,
\begin{eqnarray*}(\tau\otimes\int)\Big(\Phi\Big(|\sum_{k\geq0}x_k\otimes r_k|\Big)\Big)&\lesssim&\tau(\Phi((\sum_{k\geq0}|x_k|^2)^{1/2}))+\Phi\Big(\int_0^1\mu\Big(s,(\sum_{k\geq0}|x_k|^2)^{1/2}\Big)ds\Big)
\\&\leq&\tau(\Phi((\sum_{k\geq0}|x_k|^2)^{1/2}))+\int_0^1\Phi\Big(\mu\Big(s,(\sum_{k\geq0}|x_k|^2)^{1/2}\Big)\Big)ds
\\&=&2\tau(\Phi((\sum_{k\geq0}|x_k|^2)^{1/2})).
\end{eqnarray*}
This proof is complete.
\end{proof}
 The following result is the main result of this section. It should be compared with \cite[Theorem 5.1]{BC}.
\begin{theorem}\label{fourth main} Let $\Phi$ be an Orlicz function and let $(x_k)_{k\geq0}\in L_\Phi(\M)$ be a sequence of martingale differences.
\begin{enumerate}[{\rm (i)}]
\item\label{4ma} If $\Phi$ is $p$-convex for some $1<p<2$ and $2$-concave, then
$$\tau\Big(\Phi\Big(|\sum_{k\geq0}x_k|\Big)\Big)\approx_{\Phi}\inf_{x_k=y_k+z_k}\Big\{\tau\Big(\Phi\Big[\Big(\sum_{k\geq0}|y_k|^2\Big)^{1/2}\Big]\Big)+\tau\Big(\Phi\Big[\Big(\sum_{k\geq0}|z_k^*|^2\Big)^{1/2}\Big]\Big)\Big\},$$
where the infimum is taken over the sequences of martingale differences.
\item\label{4mb} If $\Phi$ is $2$-convex and $q$-concave for some $2<q<\infty,$ then
$$\tau\Big(\Phi\Big(|\sum_{k\geq0}x_k|\Big)\Big)\approx_{\Phi}\max\Big\{\tau\Big(\Phi\Big[\Big(\sum_{k\geq0}|x_k|^2\Big)^{1/2}\Big]\Big),\tau\Big(\Phi\Big[\Big(\sum_{k\geq0}|x_k^*|^2\Big)^{1/2}\Big]\Big)\Big\}.$$
\end{enumerate}
\end{theorem}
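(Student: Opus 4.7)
The plan is to reduce both parts to noncommutative $\Phi$-moment Khinchine inequalities for Rademacher sums, combined with the Rademacher transfer principle \cite[Corollary 3.1]{BC} which gives
$$\tau\Big(\Phi\Big(\big|\sum_{k\geq0}x_k\big|\Big)\Big)\approx_{\Phi}(\tau\otimes\int)\Big(\Phi\Big(\big|\sum_{k\geq0}x_k\otimes r_k\big|\Big)\Big),$$
valid because in each case the martingale-difference structure allows randomization via the $(r_k)$. The key point is that Lemma \ref{7-lemma} (for $2$-concave $\Phi$) and Lemma \ref{right c head} (for $2$-convex $\Phi$) already provide one direction in each case; the remaining direction is a $\Phi$-moment version of the Pisier--Lust-Piquard inequality.

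For \eqref{4mb}, the lower bound is immediate: the transfer principle together with Lemma \ref{right c head} applied first to $(x_k)$ and then to $(x_k^*)$ (noting that $|\sum x_k^*\otimes r_k|$ and $|\sum x_k\otimes r_k|$ have the same singular value function) gives both square-function estimates. For the upper bound, I would prove the row$+$column Khinchine estimate
$$(\tau\otimes\int)\Big(\Phi\big(|\textstyle\sum x_k\otimes r_k|\big)\Big)\lesssim_\Phi\tau\Big(\Phi\big[(\textstyle\sum|x_k|^2)^{1/2}\big]\Big)+\tau\Big(\Phi\big[(\textstyle\sum|x_k^*|^2)^{1/2}\big]\Big)$$
by adapting the argument of Proposition \ref{pro 2-q}: use Junge's embedding (Theorem \ref{junge decomposition}) to transform the Rademacher sum into a column-type tensor in $L_2(\M\bar{\otimes}\mathcal{B}(\ell_2))$ and invoke the dual Doob inequality of \cite{J-D}, lifted to the Orlicz setting via Lemma \ref{jsz-like lemma} (whose hypotheses match the $2$-convex, $q$-concave assumption here).

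For \eqref{4ma}, the upper bound works cleanly: since $\Phi$ is $2$-concave it satisfies $\Delta_2$, so for any martingale-difference decomposition $x_k=y_k+z_k$ we have $\tau(\Phi(|\sum x_k|))\lesssim_\Phi\tau(\Phi(|\sum y_k|))+\tau(\Phi(|\sum z_k|))$, and then the transfer principle followed by Lemma \ref{7-lemma} applied once to $(y_k)$ and once to $(z_k^*)$ yields the column- and row-square functions respectively. The lower bound is the main obstacle: mimicking Theorem \ref{second main}\eqref{sma}, I would first establish the converse Khinchine estimate
$$(\tau\otimes\int)\Big(\Phi\big(|\textstyle\sum a_k\otimes r_k|\big)\Big)\gtrsim_\Phi\inf_{a_k=b_k+c_k}\Big\{\tau\Big(\Phi\big[(\textstyle\sum|b_k|^2)^{1/2}\big]\Big)+\tau\Big(\Phi\big[(\textstyle\sum|c_k^*|^2)^{1/2}\big]\Big)\Big\}$$
over \emph{arbitrary} decompositions, and then invoke a $\Phi$-moment analogue of the Stein inequality (the modular version of Lemma \ref{bekjan}) to restrict the infimum to martingale-difference decompositions. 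The genuinely hard step here is the arbitrary-decomposition Khinchine lower bound for $p$-convex, $2$-concave $\Phi$: the $2$-concavity forbids the duality argument that drives the symmetric-space proof via \cite[Theorem 1.1]{lM-Suk}, so I would instead combine the Cuculescu-type projection construction of \cite{BC} with an integral representation $\Phi(t)\approx\int_0^\infty N_p(ts)\,d\nu(s)$ (dual to the one in Lemma \ref{jsz-like lemma}) that reduces the estimate to the Banach-space case $E=L_p+L_2$, where Theorem \ref{second main}\eqref{sma} supplies the required decomposition.
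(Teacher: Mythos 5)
Your overall skeleton is the same as the paper's: transfer to Rademacher averages via \cite[Corollary 3.1]{BC}, a $\Phi$-moment Khinchine inequality in each direction, and a modular Stein inequality to restrict the infimum in \eqref{4ma} to martingale-difference decompositions. The directions you handle by Lemma \ref{7-lemma} and Lemma \ref{right c head} (the upper bound in \eqref{4ma} and the lower bound in \eqref{4mb}, together with the adjoint trick for rows) are exactly what the paper does, and your use of the $\Phi$-moment Stein inequality to pass to martingale-difference decompositions matches the paper's appeal to \cite[Theorem 3.2]{BC} combined with \eqref{quasi-trangle}.

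The two remaining directions are where your proposal has genuine gaps, and in both cases the underlying problem is the same: you propose to obtain a \emph{modular} inequality by reducing to a \emph{norm} inequality, which does not work in this paper's framework. For the upper bound in \eqref{4mb}, the route "Junge embedding plus dual Doob, lifted via Lemma \ref{jsz-like lemma}" conflates two different statements: Proposition \ref{pro 2-q} controls the \emph{conditioned} square function by the square function (that is what dual Doob gives), not the Rademacher average by the square functions, so it is not the Khinchine upper estimate you need; moreover Lemma \ref{jsz-like lemma} is an equivalence of Orlicz \emph{norms} ($\|x\|_{L_\Phi}\approx\|x\otimes x_\Phi\|_{L_2+L_q}$) and yields norm inequalities, whereas the theorem asserts an inequality between the modulars $\tau(\Phi(\cdot))$ -- and the paper stresses (see the discussion after Theorem \ref{third main} and Proposition \ref{q-concavity necessary}) that norm and modular inequalities are genuinely inequivalent here. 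The paper simply quotes the $\Phi$-moment Khinchine upper bound from \cite[Corollary 3.3]{DR}. Likewise, for the lower bound in \eqref{4ma} your reduction to the Banach-space case $E=L_p+L_2$ via an integral representation would again produce a norm inequality; the implication available in the paper (Theorem \ref{km theorem}) runs in the opposite direction, from modular estimates for all suitable $\Phi$ to norm estimates, not back. The paper instead invokes the argument of \cite[Theorem 4.1 (i)]{BC}, noting in a footnote that it applies verbatim to any $p$-convex and $q$-concave Orlicz function with $1<p<q<\infty$. So the two steps you correctly identify as hard are not proved by the methods you sketch; they must be imported from \cite{BC} and \cite{DR} (or re-proved by genuinely modular arguments).
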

\begin{proof} Since $\Phi$ is $p$-convex and $q$-concave for $1<p<q<\infty,$ it follows from \cite[Corollary 3.1]{BC} that
\begin{equation}\label{arbitrary to tensor1}
\tau\Big(\Phi\Big(|\sum_{k\geq0}x_k|\Big)\Big)\approx_{\Phi}(\tau\otimes \int)\Big(\Phi\Big(|\sum_{k\geq0}x_k\otimes r_k|\Big)\Big).
\end{equation}

\eqref{4ma} It follows from \eqref{arbitrary to tensor1} and the proof of \cite[Theorem 4.1 (i)]{BC}\footnote{Strictly speaking, the argument of \cite[Theorem 4.1 (i)]{BC} works well for every $p$-convex and $q$-concave Orlicz function with $1<p<q<\infty$ without any change.} that
$$\tau\Big(\Phi\Big(|\sum_{k\geq0}x_k|\Big)\Big)\gtrsim_{\Phi}\inf_{x_k=y_k+z_k}\Big\{\tau\Big(\Phi\Big[\Big(\sum_{k\geq0}|y_k|^2\Big)^{1/2}\Big]\Big)+\tau\Big(\Phi\Big[\Big(\sum_{k\geq0}|z_k^*|^2\Big)^{1/2}\Big]\Big)\Big\},$$
where the infimum is taken over all possible decompositions $x_k=y_k+z_k$ with $y_k$ and $z_k$ in $L_\Phi(\M)$.
For any fixed decomposition $x_k=y_k+z_k$, since $(x_k)_{k\geq 0}$ is a martingale difference sequence,
$$x_k=(\mathcal E_k(y_k)-\mathcal E_{k-1}(y_k))+ (\mathcal E_k(y_k)-\mathcal E_{k-1}(y_k)):=u_k+v_k.$$ Then $(u_k)_{k\geq 0}$ and $(v_k)_{k\geq 0}$ are martingale difference sequences, and by the Stein inequality \cite[Theorem 3.2]{BC} and \eqref{quasi-trangle}, we have
$$\tau\Big(\Phi\Big[\Big(\sum_{k\geq0}|u_k|^2\Big)^{1/2}\Big]\Big)\lesssim_\Phi \tau\Big(\Phi\Big[\Big(\sum_{k\geq0}|y_k|^2\Big)^{1/2}\Big]\Big)$$
and
$$\tau\Big(\Phi\Big[\Big(\sum_{k\geq0}|v_k^*|^2\Big)^{1/2}\Big]\Big)\lesssim_\Phi \tau\Big(\Phi\Big[\Big(\sum_{k\geq0}|z_k^*|^2\Big)^{1/2}\Big]\Big).$$
Hence, we have
$$\tau\Big(\Phi\Big(|\sum_{k\geq0}x_k|\Big)\Big)\gtrsim_{\Phi}\inf_{x_k=y_k+z_k}\Big\{\tau\Big(\Phi\Big[\Big(\sum_{k\geq0}|y_k|^2\Big)^{1/2}\Big]\Big)+\tau\Big(\Phi\Big[\Big(\sum_{k\geq0}|z_k^*|^2\Big)^{1/2}\Big]\Big)\Big\},$$
where the infimum is taken over the sequences of martingale differences.

We now prove the converse inequality. Let $x_k=y_k+z_k,$ with $(y_k)_{k\geq0}\subset E(\M)$ and $(z_k)_{k\geq0}\subset E(\M)$ being sequences of martingale differences. Applying Lemma \ref{7-lemma} and \eqref{arbitrary to tensor1}, we conclude that
$$\tau\Big(\Phi\Big(|\sum_{k\geq0}y_k|\Big)\Big)\lesssim_{\Phi}\tau\Big(\Phi\Big[\Big(\sum_{k\geq0}|y_k|^2\Big)^{1/2}\Big]\Big)$$
and, similarly,
$$\tau\Big(\Phi\Big(|\sum_{k\geq0}z_k|\Big)\Big)\lesssim_{\Phi}\tau\Big(\Phi\Big[\Big(\sum_{k\geq0}|z_k^*|^2\Big)^{1/2}\Big]\Big).$$
Therefore, again by \eqref{quasi-trangle} we have
$$\tau\Big(\Phi\Big(|\sum_{k\geq0}x_k|\Big)\Big)\lesssim_{\Phi}\inf_{x_k=y_k+z_k}\Big\{\tau\Big(\Phi\Big[\Big(\sum_{k\geq0}|y_k|^2\Big)^{1/2}\Big]\Big)+\tau\Big(\Phi\Big[\Big(\sum_{k\geq0}|z_k^*|^2\Big)^{1/2}\Big]\Big)\Big\},$$
where the infimum is taken over the sequences of martingale differences.

\eqref{4mb} The inequality
$$\tau\Big(\Phi\Big(|\sum_{k\geq0}x_k|\Big)\Big)\lesssim_{\Phi}\max\Big\{\tau\Big(\Phi\Big[\Big(\sum_{k\geq0}|x_k|^2\Big)^{1/2}\Big]\Big),\tau\Big(\Phi\Big[\Big(\sum_{k\geq0}|x_k^*|^2\Big)^{1/2}\Big]\Big)\Big\},$$
is established in \cite[Corollary 3.3]{DR}. The inequality
$$\tau\Big(\Phi\Big(|\sum_{k\geq0}x_k|\Big)\Big)\gtrsim_{\Phi}\max\Big\{\tau\Big(\Phi\Big[\Big(\sum_{k\geq0}|x_k|^2\Big)^{1/2}\Big]\Big),\tau\Big(\Phi\Big[\Big(\sum_{k\geq0}|x_k^*|^2\Big)^{1/2}\Big]\Big)\Big\}.$$
follows from Lemma \ref{right c head} and \eqref{arbitrary to tensor1}.
\end{proof}

We end this paper by giving some examples to illustrate that the Burkholder-Gundy inequalities in Theorem \ref{second main} (respectively, Theorem \ref{fourth main}) can be applied to a wider class of symmetric spaces (respectively, Orlicz functions) than the main results in \cite{DPPS,J-BG} (respectively, \cite{BC}).

\begin{example}\label{example1} Consider the family of functions
$$\Phi(t)=t^p\ln^\alpha(e+t),\quad t\geq0,$$ where $1<p<\infty$ and $\alpha \in \mathbb R$. The function $\Phi$ is a convex function for $\alpha\geq0$ and is equivalent to a convex function for $\alpha<0.$ It is easy to see that $p_{\Phi}=q_{\Phi}=p.$ Moreover, $\Phi$ is $p$-convex for $\alpha\geq0$ and $r$-convex for each $1\leq r<p$ if $\alpha<0$ \cite[page 284]{SS}. If $p=2$, then \cite[Theorem 5.1]{BC} is not applicable, whereas Theorem \ref{fourth main} above still yields $\Phi$-moment Burkholder-Gundy inequalities.
\end{example}

\begin{example}\label{example2} Let $\Phi(t)=t^p\log(1+t^q)$ with $p>1$ and $q>0$. It is easy to check that $\Phi$ is an Orlicz function with $p_{\Phi}=p$ and $q_{\Phi}=p+q.$
\begin{enumerate}[{\rm (i)}]
\item\label{5ma} Suppose that $p=2.$ Then \cite[Theorem 5.1]{BC} gives no information. However, by \cite[Lemma 8]{JSZ} it is not hard to see that $\Phi$ is $2$-convex and $(2+q)$-concave, and hence the corresponding Burkholder-Gundy inequality holds due to Theorem \ref{fourth main} (\ref{4mb}).
\item\label{5mb} Suppose that $p+q=2.$ Then \cite[Theorem 5.1]{BC} also gives no information. However, $\Phi$ is $2$-concave, and hence the corresponding Burkholder-Gundy inequality holds due to Theorem \ref{fourth main} (\ref{4ma}).
\end{enumerate}
\end{example}

\begin{example} Let $\Phi$ be as in Example \ref{example1} and Example \ref{example2} and let $L_\Phi(0,1)$ be the corresponding Orlicz space. Then Theorem \ref{second main} can be applied to the symmetric space $E=L_\Phi(0,1)$, while \cite[Proposition 4.18]{DPPS} and \cite[Theorem 3.1]{J-BG} are both inefficient. In particular, in the setting of Example \ref{example2} with $p=2$ and $p+q\leq4$, Theorem \ref{the 2-4} is applicable to the symmetric space $E=L_\Phi(0,1)$ and yields the corresponding Burkholder inequalities; in fact since $\Phi$ is $2$-convex and $p+q$-concave, $L_\Phi(0,1)$ is $2$-convex and $p+q$-concave (\cite[Theorem 50]{JSZ}), and hence $L_\Phi(0,1)\in Int (L_2(0,1),L_4(0,1))$ (\cite{KM} or \cite[Theorem 4.31]{D1}).

\end{example}

\appendix

\section{Proof of Theorem \ref{junge decomposition}}

The proof below is due to Junge, Proposition 2.8 in \cite{J-D}.

We use the theory of Hilbert modules presented in \cite{Lance} (in particular, Kasparov stabilisation theorem).

Hilbert modules $(H_1,\langle\cdot,\cdot\rangle_1)$ and $(H_2,\langle\cdot,\cdot\rangle_2)$ are said to be isomorphic (see p.24 in \cite{Lance}) if  there exists a linear bijection $w:H_1\to H_2$ such that
$$\langle w(x),w(y)\rangle_2=\langle x,y\rangle_1,\quad x,y\in H_1.$$
Consider the standard Hilbert module $H_{\mathcal{N}}$ as in \cite{J-D},
$$H_{\mathcal{N}}:=\{x\in\mathcal{N}\bar{\otimes}\mathcal{B}(\ell_2):\ x\cdot (1\otimes e_{11})=x\},\quad $$
and
$$x^*y=\langle x,y\rangle_{H_{\mathcal{N}}}\otimes e_{11},\quad x,y\in H_{\mathcal{N}}.$$

\begin{proof}[Proof of Theorem \ref{junge decomposition}] Let $X\subset\mathcal{M}$ be a separable $C^*$-subalgebra which is dense in $L_2(\mathcal{M}).$ Let $F$ be a closure of $X$ with respect to the norm $x\to\|\mathcal{E}(|x|^2)\|_{\N}^{1/2}.$ It follows that $F$ is a countably generated Hilbert module over $\mathcal{N}$ when equipped with the $\mathcal{N}$-valued inner product
$$(x,y)\to\mathcal{E}(x^*y).$$
By \cite[Theorem 6.2]{Lance}, Hilbert modules $F\oplus H_{\mathcal{N}}$ and $H_{\mathcal{N}}$ are isomorphic. Hence, there exists a mapping $w:F\oplus H_{\mathcal{N}}\to H_{\mathcal{N}}$ such that
$$\langle w(x_1,x_2),w(y_1,y_2)\rangle_{H_{\mathcal{N}}}=\mathcal{E}(x_1^*y_1)+\langle x_2,y_2\rangle_{H_{\mathcal{N}}}.$$
Setting $x_2=y_2=0,$ we obtain
$$\langle w(x_1,0),w(y_1,0)\rangle_{H_{\mathcal{N}}}=\mathcal{E}(x_1^*y_1).$$
Therefore,
$$w(x_1,0)^*w(y_1,0)=\mathcal{E}(x_1^*y_1)\otimes e_{11}.$$
Consider the mapping $u:F\to H_{\mathcal{N}}$ defined by the setting $u(x)=w(x,0).$ In particular, $u:X\to\mathcal{N}\bar{\otimes}\mathcal{B}(\ell_2)$ and
$$u(x)^*u(y)=\mathcal{E}(x^*y)\otimes e_{11},\quad x,y\in X.$$
Clearly,
$$\|u(x)\|_{L_2(\mathcal{N}\bar{\otimes}\mathcal{B}(\ell_2))}=\|(\mathcal E(|x|^2))^{1/2}\|_{L_2(\N)}=\|x\|_{L_2(\N)}.$$
Hence, $u:L_2(\mathcal{M})\to L_2(\mathcal{N}\bar{\otimes}\mathcal{B}(\ell_2))$ is an isometry.
\end{proof}

\noindent{\bf Acknowledgements}
This work was completed when the first named author was visiting UNSW. He would like to express his gratitude to the School of Mathematics and Statistics of UNSW for its warm hospitality.

\end{document}